\theoremstyle{definition}
\newtheorem{thm}{Theorem}[section]
\newtheorem{defn}[thm]{Definition}
\newtheorem{lem}[thm]{Lemma}
\newtheorem{prop}[thm]{Proposition}
\newtheorem{rem}[thm]{Remark}
\newtheorem{cor}[thm]{Corollary}
\newcommand{\mbfz}{\mathbf z}
\newcommand{\bbf}{\mathbf f}
\newcommand{\mbfy}{\mathbf y}
\newcommand{\bh}{\mathbf h}
\newcommand{\bg}{\mathbf g}
\newcommand{\bA}{\mathbf A}
\newcommand{\bC}{\mathbf C}
\newcommand{\bM}{\mathbf M}
\newcommand{\bB}{\mathbf B}
\newcommand{\bS}{\mathbf S}
\newcommand{\bT}{\mathbf T}
\newcommand{\bD}{\mathbf D}
\newcommand{\bH}{\mathbf H}
\newcommand{\bR}{\mathbf Q_{\by^*}}
\newcommand{\bI}{\mathbf I}
\newcommand{\bG}{\mathbf G}
\newcommand{\bW}{\mathbf W}
\newcommand{\bzero}{\mathbf 0}
\newcommand{\bv}{\mathbf v}
\newcommand{\by}{\mathbf y}
\newcommand{\bP}{\mathbf P}
\newcommand{\bw}{\mathbf w}
\newcommand{\bU}{\mathbf U}
\newcommand{\bV}{\mathbf V}
\newcommand{\bu}{\mathbf u}
\newcommand{\bssigma}{\boldsymbol{\sigma}}
\newcommand{\btau}{\boldsymbol{\tau}}
\newcommand{\R}{\mathbb R}
\newcommand{\C}{\mathbb C}
\newcommand{\abs}[1]{\lvert #1\rvert}
\newcommand{\norm}[1]{\lVert #1\rVert}
\newcommand{\be}{\mathbf e}
\newcommand{\qta}{\quad\text{ and }\quad}
\newcommand{\N}{\mathbb{N}}
\newcommand{\bby}{\bar{\by}}
\renewcommand{\Vec}[1]{\renewcommand*{\arraystretch}{1.2}\begin{pmatrix*}[r]#1\end{pmatrix*}}
\newcommand{\Mat}[1]{\Vec{#1}}
\newcommand{\from}{\colon}
\DeclareMathOperator{\diag}{diag}
\DeclareMathOperator{\tr}{trace}
\newcommand{\vast}{\bBigg@{3}}
\newcommand{\Vast}{\bBigg@{4}}
\newcommand{\vastl}{\mathopen\vast}
\newcommand{\vastr}{\mathclose\vast}
\newcommand{\Vastl}{\mathopen\Vast}
\newcommand{\Vastr}{\mathclose\Vast}
\newif\ifcenter@asb@\center@asb@false
\def\center@arstrutbox{%
    \setbox\@arstrutbox\hbox{$\vcenter{\box\@arstrutbox}$}%
    }
\newcommand*{\CenteredArraystretch}[1]{%
    \ifcenter@asb@\else
      \pretocmd{\@mkpream}{\center@arstrutbox}{}{}%
      \center@asb@true
    \fi
    \renewcommand{\arraystretch}{#1}%
    }
\definecolor{colorA}{rgb}{0,0.447,0.741}
\definecolor{colorB}{rgb}{0.85,0.325,0.098}
\definecolor{colorE}{rgb}{0.929,0.694,0.125}
\definecolor{colorF}{rgb}{0.494,0.184,0.556}
\definecolor{colorD}{rgb}{0.466,0.674,0.188}
\definecolor{colorC}{rgb}{0.301,0.745,0.933}
\definecolor{colorG}{rgb}{0.635,0.078,0.184}
\newlength{\tickl}    
\tikzset{axes/.style={thick,-latex}}
\tikzset{lineplot/.style={thick}}
\tikzset{arrow/.style={thick,-latex}} 
\tikzset{thick arrow/.style={ultra thick,-latex}} 
\tikzset{grid lines/.style={very thin,gray!30}}	
\tikzset{point/.style={radius=2pt}}
\tikzset{help line/.style={black,thin,dashed}} 
\newcommand{\xticks}[5][0]{
	\foreach \x in {#2}{
		\draw[thin] ({\x},{#1-#3/2}) -- ({\x},{#1+#3/2}) node[below,pos=0,align=center,#5] {#4};
	}
}
\newcommand{\yticks}[5][0]{
	\foreach \y in {#2}{
		\draw[thin] ({#1-#3/2},{\y}) -- ({#1+#3/2},{\y}) node[left,pos=0,align=center,#5] {#4};
	}
}
\newcommand{\kosgrid}[8][0]{
	\draw[grid lines,step={(#8)}] ({#2+0.1},{#4+0.1}) grid ({#3-0.1},{#5-0.1});
	\draw[axes] (#2,#1) -- (#3,#1) node[right]{#6};
	\draw[axes] (0,#4) -- (0,#5) node[left]{#7};
	}
\newsavebox{\measure@tikzpicture}
  \def\tikz@width{#1}%
  \def\tikzscale{1}\begin{lrbox}{\measure@tikzpicture}%
  \edef\tikzscale{\pgfmathresult}%
\begin{document}
\title{On Lyapunov Stability of Positive and Conservative Time Integrators and Application to Second Order Modified Patankar--Runge--Kutta Schemes}
\author[1]{Thomas Izgin} 
\author[1]{Stefan Kopecz}
\author[1]{Andreas Meister}
\affil[1]{Department of Mathematics, University of Kassel, Germany}
\affil[]{izgin@mathematik.uni-kassel.de\ \&\ kopecz@mathematik.uni-kassel.de\ \&\ meister@mathematik.uni-kassel.de}
\setcounter{Maxaffil}{0}
\renewcommand\Affilfont{\itshape\small}
\date{}
%

%
%
\maketitle
\begin{abstract} Since almost twenty years, modified Patankar--Runge--Kutta (MPRK) methods have proven to be efficient and robust numerical schemes that preserve positivity and conservativity of the production-destruction system irrespectively of the time step size chosen. Due to these advantageous properties they are used for a wide variety of applications. Nevertheless, until now, an analytic investigation of the stability of MPRK schemes is still missing, since the usual approach by means of Dahlquist's equation is not feasible. 
	Therefore, we consider a positive and conservative 2D test problem and provide statements usable for a stability analysis of general positive and conservative time integrator schemes based on the center manifold theory.
	We use this approach to investigate the Lyapunov stability of
	the second order MPRK22($\alpha$) and MPRK22ncs($\alpha$) schemes.
	We prove that MPRK22($\alpha$) schemes are unconditionally stable and derive the stability regions of MPRK22ncs($\alpha$) schemes. Finally, numerical experiments are presented, which confirm the theoretical results.  \end{abstract}
\section{Introduction}\label{sec:intro}

In recent years, there has been a strong interest in the development of numerical schemes that preserve properties of the solutions of differential equations. Modified Patankar--Runge--Kutta (MPRK) methods, see \cite{BDM2003,KM18,KM18Order3,MR3969000,MR3934688,MR4064785}, guarantee positivity and conservativity of the numerical solution of positive and conservative production-destruction systems (PDS). For other recent approaches which facilitate positive and conservative numerical approximations, we refer to \cite{MR4087156,MR4109346, MR4194400,nuesslein2021positivitypreserving,blanes2021positivitypreserving}.

A PDS \[\by'=\bP(\by)-\bD(\by),\quad \by(0)=\by^0,\] with $\by=(y_1,\dots,y_N)^T$ and $\bP,\bD\geq \bzero$, is called positive if $\by^0>\bzero$ implies $\by(t)>\bzero$ for all times $t>0$ and is called conservative if $\sum_{i=1}^N y_i(t)=\sum_{i=1}^N y_i^0$ for all times $t>0$. Conditions which ensure the positivity of a PDS are given in \cite{FormaggiaScotti2011}. A conservative PDS can always be written in the form

\[y_i'=\sum_{j=1}^N (p_{ij}(\by)-d_{ij}{(\by)})\quad\text{with}\quad p_{ij}(\by)=d_{ji}(\by) \qta p_{ij}(\by), d_{ij}(\by) \ge 0\] 
for all $\by \ge \bzero$ and $i,j=1,\dots,N$ in which $p_{ij}(\by)$ refers to a production term of the $ith$ equation with corresponding destruction term $d_{ji}(\by)$ in the $jth$ equation. Analogously, $d_{ij}(\by)$ denotes a destruction term of equation $i$ with associated production term $p_{ji}(\by)$ in equation $j$. 
In summary, the solution of a positive and conservative PDS remains positive for all times $t>0$ and the sum of the solutions components remains constant for all times $t>0$.

Originally introduced in \cite{BDM2003}, there has been a considerable interest in the development of MPRK schemes in recent years. In \cite{KM18,KM18Order3,MR3987250} MPRK schemes of second and third order were introduced. These were generalized in the context of SSP Runge--Kutta methods in \cite{MR3969000,MR3934688} and applied to solve reactive Euler equations. In \cite{MR4064785} the idea of \cite{BDM2003} was used to develop mPDeC schemes, which are MPRK schemes of arbitrary order based on deferred correction schemes. All these schemes are unconditionally positive and conservative and have proven their efficiency and robustness while integrating stiff PDS.

MPRK schemes have been used in a wide range of applications. The first order modified Patankar--Euler scheme, introduced in \cite{BDM2003}, is used in a global ocean mercury model with a methylation cycle \cite{SemeniukDastoor2017}.
The second order MPRK scheme of \cite{BDM2003} is applied to an ecosystem model for the simulation of the cyanobacteria life cycle \cite{HenseBeckmann2010,HenseBurchard2010} or that of dinoflagellates \cite{WHK2013}.  
In \cite{BMZ2007,BMZ2009,MeisterBenz2010} this scheme is also used to model the phosphor cycle in rivers and lakes. 
Moreover this scheme was found to be beneficial when applied to NPZD-models in \cite{BDM2005} and is also implemented in the General Ocean Turbulence Model (GOTM) \cite{BBKMNU2006}.
Further applications can be found in the context of magneto-thermal winds \cite{Gressel2017} or warm-hot intergalactic mediums \cite{KlarMuecket2010}. 

 Often MPRK schemes are used within a splitting ansatz as a time integrator for the reactive part of the considered system of partial differential equations in order to avoid additional time step restrictions arising from stiff reaction terms.
 In \cite{CMOT21} mPDeC schemes are used as time integrators for the shallow water equations to ensure unconditionally positivity of the water height.
In \cite{SchippmannBurchard2011} it was demonstrated that the second order MPRK scheme of \cite{BDM2003} surpasses standard Runge--Kutta and Rosenbrock methods for the solution of conservative biochemical models in performance. This was also confirmed in \cite{BonaventuraDellaRocca2016} where the Brusselator PDS was solved with different time integration methods. 
In \cite{WS21} a third order MPRK scheme from \cite{KM18Order3} was successfully used in a high-order operator-splitting method for the numerical solution of the SIR epidemic model.

To the authors knowledge, no general stability analysis of MPRK schemes has been carried out so far.
There are several reasons for the lack of such an analysis. First of all, unlike Runge--Kutta methods, MPRK schemes cannot be applied to the scalar Dahlquist equation \begin{equation}\label{eq:lin_eq}y'=\lambda y,\quad\lambda\in\C^-,\end{equation} since 
it is unclear how to treat the complex term $\lambda y$ in the production-destruction setting.
This issue can be handled by choosing $\lambda\in\R^-$ and considering the system $y_1'=\lambda y_1$, $y_2'=-\lambda y_1$.
For this system the first order MPRK scheme of \cite{BDM2003} is equivalent to the L-stable backward Euler method. Also second order MPRK schemes show an excellent stability behavior when applied to this system, as we show in Section~\ref{b=0_1}. 
Unfortunately, this stability behavior can only be observed for specific MPRK schemes in more general cases, which requires a more detailed analysis.

The scalar Dahlquist equation is so valuable, since it makes a direct stability analysis of the linear system \begin{equation}\label{eq:lin_sys}\by'=\bA\by\end{equation}  with $\bA\in\R^{N\times N}$ unnecessary. A Runge--Kutta method applied to \eqref{eq:lin_sys} has the same stability properties as applied to the $N$ scalar equations \eqref{eq:lin_eq} with $\lambda$ passing through the $N$ eigenvalues of $\bA$, see for instance \cite[Chapter 6]{MR1912409}.

Since the direct application of MPRK schemes to \eqref{eq:lin_eq} is not possible, stability should be investigated for a linear system \eqref{eq:lin_sys}, where we also require the system to be positive and conservative.
In \cite{ortlebhundsdorfer2017} it is pointed out that the inherent nonlinear nature of MPRK schemes makes even a linear stability analysis difficult. To see this, we follow \cite{ortlebhundsdorfer2017} and consider the second order MPRK scheme of \cite{BDM2003} applied to a conservative and positive linear PDS of the form \eqref{eq:lin_sys}. The resulting scheme is given by
\[\by^{(2)}=\by^n+\Delta t\bA \by^{(2)},\quad \by^{n+1}=\by^n+\frac{\Delta t}{2}\bA(\bW^n +\bI)\by^{n+1}\]
with a positive time step size $\Delta t$, $\bW^n=\operatorname{diag}(y_i^n/y_i^{(2)})$ and $\bI$ denoting the identity matrix in $\R^{N\times N}$.
Hence, we find
$\by^{n+1}=\bm R^n \by^n$ with

\[\bm R^n=\biggl(\bI - \frac{\Delta t}{2}\bA(\bW^n +\bI)\biggr)^{-1}.\]
This shows that $\by^{n+1}$ depends nonlinearly on $\by^n$ even when a linear PDS is considered, which complicates the analysis significantly.

The system matrix $\bA=(a_{ij})\in\R^{N\times N}$ of a positive and conservative linear PDS written in the form \eqref{eq:lin_sys} must satisfy $a_{ii}\leq 0$, $a_{ij}\geq 0$ for $i\ne j$ and $\sum_{i=1}^N a_{ij}=0$ for $j=1,\dots,N$.
Hence, the system
\begin{align}\
\mbfy'=\bA\mbfy,\quad \bA=\begin{pmatrix*}[r]
-a &b\\ a &-b
\end{pmatrix*},\quad a,b\geq 0,\quad a+b> 0\label{PDS_test}
\end{align}
represents all positive and conservative linear PDS of size $2\times 2$, except the case with $a=b=0$, which needs no stability analysis. 
In order to see that \eqref{PDS_test} is indeed a PDS, we set
\begin{align}\label{pij_dij_Test_eq}  
p_{12}(\mbfy)&=d_{21}(\mbfy)=by_2,&
p_{21}(\mbfy)&=d_{12}(\mbfy)=ay_1,&
p_{ii}(\mbfy)&=d_{ii}(\mbfy)=0,\ i\in\{1,2\}
\end{align}
and obtain $y_1'=p_{12}(\by)-d_{12}(\by)$ and $y_2'=p_{21}(\by)-d_{21}(\by)$.
The eigenvalues of $\bA$ are $\lambda=-(a+b)<0$ and $0$. Given an initial value $\by^0=(y_1^0,y_2^0)^T$, the solution of the initial value problem associated with \eqref{PDS_test} is
\begin{equation}
\mbfy(t)=\frac{y_1^0+y_2^0}{a+b}\begin{pmatrix}
b\\a 
\end{pmatrix}+\frac{a y_1^0-by_2^0}{a+b}\Vec{1\\ -1}e^{\lambda t}=\frac{1}{a+b}\begin{pmatrix}b+a e^{\lambda t} & b-be^{\lambda t}\\a-ae^{\lambda t}&a+b e^{\lambda t}\end{pmatrix}\mbfy^0. \label{exact_sol}
\end{equation}
Since $\lambda<0$ we have $\mbfy(t)>0$ for $t\geq 0$ if $\mbfy^0>0$, which shows that the PDS \eqref{PDS_test} is positive. Moreover, summation in \eqref{exact_sol} shows $y_1(t)+y_2(t)=y_1^0+y_2^0$ for all $t\geq 0$, which confirms that the PDS is also conservative. 

The system \eqref{PDS_test} is also considered in \cite{IKM21}, where it is used to study a linearization of second order MPRK schemes. Section~\ref{sec:stab_mprk22} extends the results of \cite{IKM21} to the nonlinear case.

In the following, we introduce a framework to study the Lyapunov stability of positive and conservative time integrators, when applied to \eqref{PDS_test}.  
Within this framework we analyze the stability of
the second order MPRK22($\alpha$) and MPRK22ncs($\alpha$) schemes introduced in \cite{KM18}. If we want to refer to both schemes we use MPRK22 schemes as an abbreviation.
The MPRK22 schemes are given by
\begin{subequations}\label{eq:MPRK22b}
	\begin{align}
	&\begin{aligned}
	\mathllap{y_i^{(1)}} &= y_i^n,
	\end{aligned}\label{eq:y_1=y_n}\\ 
	&\begin{aligned}
	\mathllap{y_i^{(2)}} &= y_i^n + \alpha\Delta t\sum_{j=1}^N\Vastl((1-\gamma)p_{ij}(\mbfy^{(1)})+p_{ij}(\mbfy^{(1)})\frac{y_j^{(2)}}{y_j^{(1)}}\gamma-d_{ij}(\mbfy^{(1)})\frac{y_i^{(2)}}{y_i^{(1)}}\Vastr),
	\end{aligned}\label{eq:gamma}\\ 
	&\begin{multlined}[b][.7\columnwidth]
	\mathllap{y_i^{n+1}} = y_i^n + \Delta t\sum_{j=1}^N\Vastl( \Biggl(\biggl(1-\frac1{2\alpha}\biggr) p_{ij}(\mbfy^{(1)})+\frac1{2\alpha} p_{ij}(\mbfy^{(2)})\Biggr)\frac{y_j^{n+1}}{(y_j^{(2)})^{\frac{1}{\alpha}}(y_j^{(1)})^{1-\frac{1}{\alpha}}}\\
	- \Biggl(\biggl(1-\frac1{2\alpha}\biggr) d_{ij}(\mbfy^{(1)})+ \frac1{2\alpha} d_{ij}(\mbfy^{(2)})\Biggr)\frac{y_i^{n+1}}{(y_i^{(2)})^{\frac{1}{\alpha}}(y_i^{(1)})^{1-\frac{1}{\alpha}}}\Vastr),
	\end{multlined}\label{eq:y_n_plus_1}
	\end{align}
\end{subequations}
for $i=1,\dots,N$ with $\alpha\geq\frac12$. The methods with $\gamma=1$ in \eqref{eq:gamma} are called MPRK22$(\alpha)$ schemes. With $\gamma=~0$ they are named MPRK22ncs$(\alpha)$ schemes. For MPRK22($\alpha$) schemes $\by^{(2)}$ is also conservative, in the sense that $\sum_{i=1}^N y_i^{(2)}=~\sum_{i=1}^N y_i^0$. This is not the case for MPRK22ncs($\alpha$) methods, where \enquote{ncs} is an abbreviation for \enquote{non conservaitve stages}, and will result in inferior stability properties as shown in Section~\ref{sec:stab_mprk22}.

The outline of the paper is as follows. In Section \ref{sec:stab_dyn_sys} we summarize the center manifold theory and prove the main theorem concerning the Lyapunov stability of general positive and conservative time integration schemes. This theorem is used to analyze the local stability of the schemes \eqref{eq:MPRK22b} when applied to the positive and conservative linear PDS \eqref{PDS_test} in Section~\ref{sec:stab_mprk22}. We prove that the MPRK22($\alpha$) methods are unconditionally stable. Whereas, MPRK22ncs$(\alpha)$ is also unconditionally stable for $\alpha\geq 1$ and requires time step restrictions in the case  $\alpha<1$. Finally, we provide numerical experiments confirming the theoretical results in Section \ref{Num Tests}.

\section{Center manifold theory and stability of positive and conservative time integration schemes}\label{sec:stab_dyn_sys}
In this section we recall the definitions of stable and asymptotically stable steady state solutions of differential equations and the corresponding definitions for fixed points of iteration schemes. We also recap theorems that are helpful to identify the stability properties of a given fixed point.  
These show that for hyperbolic fixed points stability is solely determined by the eigenvalues of the Jacobian of the underlying map, which is not true for non-hyperbolic fixed points. The center manifold theory is an important tool to investigate the stability of non-hyperbolic fixed points.
Using this theory we present Theorem~\ref{Thm_MPRK_stabil}, which provides for the first time a criteria to assess the stability of general positive and conservative schemes applied to \eqref{PDS_test}.

In the following, $\norm{\ \cdot\  }$ denotes an arbitrary norm in $\R^N$ and $\bD\bbf$ denotes the Jacobian of a map $\bbf$.
\begin{defn}\label{Def Lyapunov Cont}
Let $\by^*$ be a steady state solution of a differential equation $\by'={\bbf}(\by)$, that is ${\bbf}(\by^*)=\bzero$.
\begin{enumerate}
\item\label{item:lyap_stab} The steady state solution $\by^*$ is called \emph{Lyapunov stable} if, for any $\epsilon>0$, there exists a $\delta=\delta(\epsilon)>0$ such that $\norm{\by(0)-\by^*}<\delta$ implies $\norm{\by(t)-\by^*}<\epsilon$ for all $t\geq 0$.
\item If in addition to a), there exists a constant $c>0$ such that $\Vert \by(0)-\by^*\Vert<c$ implies  $\Vert \by(t)-\by^*\Vert \to 0$ for $t\to \infty$,  we call the steady state solution $\by^*$ \emph{asymptotically stable.}
\item A steady state solution that is not stable is said to be \emph{unstable}.
\end{enumerate}
\end{defn}
The PDS \eqref{PDS_test} has infinitely many steady state solutions, since every $\by^*$ in the nullspace of $\bA$, that is $\by^*=\theta (b,a)^T$ with $\theta\in\R$, is a steady state of \eqref{PDS_test}.
In geometrical terms, all steady states lie on the line $a y_1-by_2=0$ in the $y_1$-$y_2$-coordinate system. With respect to the asymptotic behavior of \eqref{exact_sol}, we see
\begin{equation*}
\lim_{t\to\infty}\mbfy(t)=\frac{y_1^0+y_2^0}{a+b}\Vec{b\\a}+\lim_{t\to\infty}\frac{a y_1^0-by_2^0}{a+b}\Vec{1\\ -1}e^{\lambda t}=\frac{y_1^0+y_2^0}{a+b}\begin{pmatrix}
b\\a 
\end{pmatrix},
\end{equation*}
since $\lambda<0$.
Thereby, given an initial value $\by^0=(y_1^0,y_2^0)^T$, the solution monotonically approaches the steady state
\begin{equation*}
\by^*=\frac{y_1^0+y_2^0}{a+b}\Vec{b\\a}
\end{equation*}
along the line $y_1+y_2=y_1^0+y_2^0$ in the $y_1$-$y_2$-coordinate system. 
Hence, the steady state solutions 	 of \eqref{PDS_test} cannot be asymptotically stable, as there are infinitely many other steady state solutions in every neighborhood of a steady state. But it can be shown that they are stable in the sense of Defintion~\ref{Def Lyapunov Cont}~\ref{item:lyap_stab}, see \cite[Theorem 3.23]{MR1912409}.

When applied to a differential equation, a time integration scheme should preserve as many properties of the differential equation as possible. In particular, the fixed points of the iteration scheme, should be the steady state solutions of the differential equation with equal stability properties. 
\begin{defn}\label{Def_Lyapunov_Diskr}
Let $\by^*$ be a fixed point of an iteration scheme $\by^{n+1}=\bg(\by^n)$, that is $\by^*=\bg(\by^*)$. 
\begin{enumerate}
\item\label{def:stab} The fixed point $\by^*$ is called \emph{Lyapunov stable} if, for any $\epsilon>0$, there exists a $\delta=\delta(\epsilon)>0$ such that $\norm{\by^0-\by^*}<\delta$ implies $\norm{\by^n- \by^*}<\epsilon$ for all $n\geq 0$.
\item If in addition to a), there exists a constant $c>0$ such that $\Vert \by^0-\by^*\Vert<c$ implies $\Vert \by^n-\by^*\Vert \to 0$ for $n\to \infty$, the fixed point $\by^*$ is called \emph{asymptotically stable.}
\item A fixed point that is not stable is said to be \emph{unstable}.
\end{enumerate}
\end{defn}
In the following, we will also briefly speak of stability instead of Lyapunov stability.

Next, we summarize theorems which are helpful to investigate the stability of fixed points of iteration schemes.
 
\begin{thm}[{\cite[Theorem 1.3.7]{SH98}}]\label{Thm:_Asym_und_Instabil}
Let  $\by^{n+1}=\bg(\by^n)$ be an iteration scheme with fixed point $\by^*$. Then
\begin{enumerate}
\item $\by^*$ is asymptotically stable if $\abs\lambda <1$ for all eigenvalues $\lambda$ of $\bD\bg(\by^*)$. 
\item $\by^*$ is unstable if $\abs {\lambda}>1$ for one eigenvalue $\lambda$ of $\bD\bg(\by^*)$.
\end{enumerate}
\end{thm}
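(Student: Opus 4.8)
This is the classical linearisation criterion for maps, so the plan is to reduce both assertions to the behaviour of the linear part $\bJ=\bD\bg(\by^*)$ through a Taylor expansion at the fixed point,
\[
\bg(\by)-\by^*=\bJ(\by-\by^*)+\bh(\by),\qquad \norm{\bh(\by)}/\norm{\by-\by^*}\to 0\ \text{ as }\ \by\to\by^*,
\]
and then to absorb the nonlinear remainder $\bh$ into estimates that are controlled entirely by the spectrum of $\bJ$.

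For part a) I would first invoke the standard fact that, since $\rho(\bJ)<1$ (with $\rho$ the spectral radius), for any $\eta\in(\rho(\bJ),1)$ there is a norm on $\R^N$ whose induced operator norm satisfies $\norm{\bJ}\le\eta$; such an adapted norm is obtained from the Jordan or Schur form of $\bJ$. Working in this norm and setting $\nu=\tfrac12(\eta+1)<1$, the expansion above supplies a radius $r>0$ with $\norm{\bg(\by)-\by^*}\le\nu\norm{\by-\by^*}$ whenever $\norm{\by-\by^*}<r$. This makes the ball of radius $r$ forward invariant and yields $\norm{\by^n-\by^*}\le\nu^n\norm{\by^0-\by^*}\to 0$, which is precisely Lyapunov stability together with the attractivity required for asymptotic stability in Definition~\ref{Def_Lyapunov_Diskr}; equivalence of all norms on $\R^N$ then transfers the conclusion to an arbitrary norm.

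For part b) I would argue by contradiction, exploiting an expanding direction. Assuming some eigenvalue has $\abs\lambda>1$, I split $\R^N=E^u\oplus E^s$ into $\bJ$-invariant subspaces, with $E^u\ne\{\bzero\}$ spanned by the generalised eigenspaces for eigenvalues of modulus greater than $1$, and write $\bP^u,\bP^s$ for the associated projections. In an adapted norm there exist $\sigma>1$ and $\tau<\sigma$ with $\norm{\bJ\bv}\ge\sigma\norm{\bv}$ for $\bv\in E^u$ and $\norm{\bJ\bw}\le\tau\norm{\bw}$ for $\bw\in E^s$. The idea is a cone argument: for the cone $\mathcal C=\{\by:\norm{\bP^u(\by-\by^*)}\ge\norm{\bP^s(\by-\by^*)}\}$ one shows that on a sufficiently small punctured neighbourhood of $\by^*$ the map keeps orbits in $\mathcal C$ while strictly expanding the unstable component, $\norm{\bP^u(\bg(\by)-\by^*)}\ge\sigma'\norm{\bP^u(\by-\by^*)}$ for some $\sigma'>1$. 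Then any $\by^0\in\mathcal C$ taken arbitrarily close to $\by^*$ produces an orbit whose distance to $\by^*$ grows geometrically until it leaves any fixed small ball, contradicting stability.

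The estimate in a) is routine once the adapted norm is in hand. The hard part will be b): I must verify that the remainder $\bh$ neither breaks the forward invariance of $\mathcal C$ nor spoils the strict expansion in the $E^u$-direction, which forces me to shrink the neighbourhood until the Lipschitz size of $\bh$ there is dominated by the spectral gap $\sigma-\tau$. Establishing this cone invariance rigorously, rather than the contraction in a), is where the real work lies.
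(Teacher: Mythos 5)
The paper does not prove this theorem at all: it is imported verbatim from the cited reference \cite[Theorem 1.3.7]{SH98}, so there is no in-paper argument to compare yours against. Judged on its own, your proposal is the standard and correct linearization proof. Part a) is complete in all essentials: the adapted norm with $\norm{\bD\bg(\by^*)}\le\eta<1$, the absorption of the $o(\norm{\by-\by^*})$ remainder into a contraction factor $\nu<1$ on a small forward-invariant ball, and the transfer to an arbitrary norm by equivalence of norms on $\R^N$. Part b) is a correct outline of the classical cone (expansion) argument; one detail worth making explicit when you write it out is that your complementary subspace $E^s$ may contain eigenvalues of modulus exactly $1$ (the relevant non-hyperbolic case in this paper), so the best you can guarantee there is $\norm{\bJ\bw}\le\tau\norm{\bw}$ with $\tau>1$ but $\tau<\sigma$ --- which, as you already noted, is all the cone-invariance and strict-expansion estimates require once the local Lipschitz constant of the remainder is shrunk below the gap $\sigma-\tau$. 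With that caveat filled in, the argument closes.
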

The above theorem does not give any information if the spectral radius of $\bD\bg(\by^*)$ is equal to $1$.
Hence, it is reasonable to introduce the following definition. 
\begin{defn}[{\cite[Definition 1.3.6]{SH98}}]\label{Def hyperbolic}
A fixed point $\by^*$ of an iteration scheme $\by^{n+1}=\bg(\by^n)$ is called \emph{hyperbolic} if $\abs\lambda\ne 1$ for all eigenvalues $\lambda$ of $\bD\bg(\by^*)$. If a fixed point is not hyperbolic, it is called \emph{non-hyperbolic}.
\end{defn}
A generalization of Theorem~\ref{Thm:_Asym_und_Instabil} is the Hartman-Grobman Theorem, which states that a nonlinear iteration scheme and its linearization share the same behavior near hyperbolic fixed points, see \cite[Theorem 1.6.2]{SH98} for the precise statement. One can show, that for non-hyperbolic fixed points nonlinear terms have to be taken into account in order to investigate the stability. Thereby, the theory of center manifolds is an important tool and will be explained in the following section. 

\subsection{Center Manifold Theory}\label{Sec:Center_Manifold}
To study the stability of a non-hyperbolic fixed point $\by^*$ of an iteration scheme with $\mathcal C^2$-map $\bg$, we make use of an affine linear transformation\footnote{See the proof of Theorem~\ref{Thm_MPRK_stabil} for the details of this transformation.} to obtain a $\mathcal{C}^2$-map $\bG\colon D\to \R^{N}$, with $D\subset \R^{N}$ being a neighborhood of the origin, which has the form
\begin{equation}
\bG(\bw_1,\bw_2)=\Vec{\bU\bw_1+\bu(\bw_1,\bw_2)\\\bV\bw_2 +\bv(\bw_1,\bw_2)},\label{Form_of_G}
\end{equation}
with $\bw_1\in \R^m$, $\bw_2\in \R^l$ and $m+l=N$. The square matrices $\bU\in\R^{m\times m}$ and $\bV\in\R^{l\times l}$ are such that $\abs\lambda = 1$ for all eigenvalues $\lambda$ of $\bU$ and $\abs\mu <1$ for all eigenvalues $\mu$ of $\bV$. The functions $\bu$ and $\bv$ are in $\mathcal{C}^2$ and $\bu,\bv$ as well as their first order derivatives vanish at the origin, that is
\begin{align*}
\bu(\bzero,\bzero)&=\bzero, & \bD\bu(\bzero,\bzero)&=\bzero, & 
\bv(\bzero,\bzero)&=\bzero, & \bD\bv(\bzero,\bzero)&=\bzero,
\end{align*} where $\bzero$ stands for the zero vector or matrix of appropriate size, respectively. 
In particular, the fixed point $\by^*$ of $\bg$ is mapped to $\bzero$, which is a fixed point of $\bG$ with equal stability properties as $\by^*$. 
Hence, it is sufficient to study the stability of the origin with respect to $\bG$, which is a simplification due to the existence of a center manifold.

\begin{thm}[Center Manifold Theorem]\label{Thm:Ex_CM}
Let $\bG$ be defined as in \eqref{Form_of_G} and 
\begin{align}
\Vec{\bw_1^{n+1}\\\bw_2^{n+1}}=\bG(\bw_1^n,\bw_2^n)=\Vec{\bU\bw_1^n+\bu(\bw_1^n,\bw_2^n)\\\bV\bw_2^n +\bv(\bw_1^n,\bw_2^n)},\quad \Vec{\bw_1^0\\ \bw_2^0}\in \R^{m+l}\label{Form_of_G_transformed}.
\end{align}
\begin{enumerate}
\item\label{item:existence}  (Existence):
There exists a center manifold for $\bG$, which is locally representable as the graph of a function $\bh\colon\R^m\to \R^l$.  This means, for some $\epsilon>0$ there exists a $\mathcal{C}^1$-function $\bh\colon\R^m\to\R^l$ with $\bh(\bzero)=\bzero$ and $\bD\bh(\bzero)=\bzero$ such that $\Vert \bw_1^0\Vert <\epsilon$ and $(\bw_1^1, \bw_2^1)^T=\bG(\bw_1^0,\bh(\bw_1^0))$ implies $\bw_2^1=\bh(\bw_1^1)$.
\item\label{item:attractivity}  (Local Attractivity): Let $(\bw_1^n,\bw_2^n)^T$, $n\in\N_0$ represent the sequence generated by \eqref{Form_of_G_transformed}.  If $\Vert \bw_1^n\Vert,\Vert \bw_2^n\Vert<\epsilon$ for all $n\in \N_0$, then the distance of $(\bw_1^n,\bw_2^n)$ to the center manifold tends to zero for $n\to \infty$, i.\,e.\ $\Vert \bw_2^n-\bh(\bw_1^n)\Vert\to 0$ for $n\to \infty$.
\end{enumerate}
\end{thm}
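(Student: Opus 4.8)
The plan is to prove both parts by a contraction-mapping argument of graph-transform (Lyapunov--Perron) type, after first making the nonlinearities globally small. Since only the behaviour near the origin matters, I would fix a smooth cutoff $\psi\colon\R^N\to[0,1]$ that equals $1$ on the ball of radius $\epsilon/2$ and vanishes outside the ball of radius $\epsilon$, and replace $\bu,\bv$ by $\tilde\bu=\psi\bu$ and $\tilde\bv=\psi\bv$. Because $\bu,\bv$ and their first derivatives vanish at the origin, the modified maps are globally defined, of class $\mathcal C^1$, bounded, and have a Lipschitz constant that can be made as small as we please by shrinking $\epsilon$. The modified system agrees with $\bG$ on the ball of radius $\epsilon/2$, so any invariant graph constructed for it is a local center manifold for $\bG$.

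Next I would encode invariance of a graph $\bw_2=\bh(\bw_1)$ as a functional equation: the graph is invariant precisely when
\[
\bV\bh(\bw_1)+\tilde\bv(\bw_1,\bh(\bw_1))=\bh\bigl(\bU\bw_1+\tilde\bu(\bw_1,\bh(\bw_1))\bigr)\quad\text{for all }\bw_1\in\R^m.
\]
Since every eigenvalue of $\bU$ has modulus $1$, $\bU$ is invertible, so for small Lipschitz data the base map $\bw_1\mapsto\bU\bw_1+\tilde\bu(\bw_1,\bh(\bw_1))$ is a bi-Lipschitz homeomorphism of $\R^m$; call its inverse $F_\bh^{-1}$. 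Solving the invariance equation for the value at the image point defines the graph transform
\[
(\mathcal T\bh)(\xi)=\bV\,\bh\bigl(F_\bh^{-1}(\xi)\bigr)+\tilde\bv\bigl(F_\bh^{-1}(\xi),\bh(F_\bh^{-1}(\xi))\bigr).
\]
I would work in the complete metric space of bounded Lipschitz functions $\bh\colon\R^m\to\R^l$ with $\bh(\bzero)=\bzero$, $\norm{\bh}_\infty$ small, and Lipschitz constant $\le 1$, under the supremum distance. After passing to norms adapted to $\bU$ and $\bV$ (so that $\norm{\bV}\le\nu<1$ and $\norm{\bU^{-1}}\le 1+\eta$ with $\nu(1+\eta)<1$), a direct estimate shows $\mathcal T$ maps this space into itself and is a contraction once $\epsilon$ is small, whence the Banach fixed-point theorem yields a unique Lipschitz $\bh$ whose graph is the center manifold. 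Here $\bh(\bzero)=\bzero$ is immediate as the origin is fixed; and once $\mathcal C^1$ regularity is in hand, differentiating the invariance equation at the origin and using $\bD\tilde\bu(\bzero,\bzero)=\bD\tilde\bv(\bzero,\bzero)=\bzero$ reduces $\bD\bh(\bzero)$ to the unique solution of the Sylvester equation $\bV\,\bD\bh(\bzero)=\bD\bh(\bzero)\,\bU$, which is $\bzero$ because the spectra of $\bV$ and $\bU$ are disjoint.

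For part (b), I would track the off-manifold deviation $\bm{\zeta}^n=\bw_2^n-\bh(\bw_1^n)$ along an orbit staying in the ball of radius $\epsilon$. Subtracting the invariance relation for $\bh$ from the recursions for $\bw_1^{n+1}$ and $\bw_2^{n+1}$, and using $\norm{\bV}\le\nu<1$ together with the smallness of the Lipschitz constants of $\tilde\bu,\tilde\bv$ and of $\bh$, I would derive an estimate $\norm{\bm{\zeta}^{n+1}}\le\rho\,\norm{\bm{\zeta}^n}$ with $\rho<1$. Iterating gives $\norm{\bm{\zeta}^n}\to0$, which is exactly $\norm{\bw_2^n-\bh(\bw_1^n)}\to0$.

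The main obstacle is the regularity claim $\bh\in\mathcal C^1$ with $\bD\bh(\bzero)=\bzero$: the graph transform produces a Lipschitz fixed point for free, but continuous differentiability does not follow from the Banach fixed-point theorem alone. The cleanest remedy is the fiber-contraction theorem---run a second contraction for the candidate derivative over the base contraction, verify its fixed point is continuous, and identify it with the genuine derivative of $\bh$ by uniqueness. Choosing the adapted norms so that the spectral gap $1-\nu$ dominates the tunable Lipschitz constants of the nonlinearities is what makes all of these contractions work simultaneously, so the careful bookkeeping of constants, rather than any single conceptual step, is where the real effort lies.
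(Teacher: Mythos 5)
Your proposal is correct, but note that the paper does not actually prove this theorem: its ``proof'' is a list of citations (Marsden--McCracken, Carr, Osipenko, Iooss) where the statement is taken off the shelf. What you have written is essentially a reconstruction of the standard argument contained in those references: cut off $\bu,\bv$ so that they become globally defined with arbitrarily small Lipschitz constant, run the graph transform (equivalently, the Lyapunov--Perron fixed-point equation) on a complete space of bounded Lipschitz graphs, upgrade the Lipschitz fixed point to $\mathcal{C}^1$ by the fiber-contraction theorem, read off $\bD\bh(\bzero)=\bzero$ from the Sylvester equation $\bV\,\bD\bh(\bzero)=\bD\bh(\bzero)\,\bU$ together with the disjointness of the spectra, and obtain attractivity from the one-step estimate $\norm{\bw_2^{n+1}-\bh(\bw_1^{n+1})}\le\rho\,\norm{\bw_2^{n}-\bh(\bw_1^{n})}$ with $\rho=\norm{\bV}+\mathcal{O}(\epsilon)<1$ in adapted norms; all of these steps close, and your attractivity estimate in fact gives exponential decay, which is stronger than what the theorem asserts. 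Two minor bookkeeping points: the modified map agrees with $\bG$ only on the ball of radius $\epsilon/2$, so the $\epsilon$ appearing in the final statement must be taken as half the cutoff radius; and since $\bU$ is only known to have unimodular spectrum rather than to be an isometry, the bi-Lipschitz invertibility of the base map genuinely needs the adapted norm with $\norm{\bU^{-1}}\le 1+\eta$ and $\nu(1+\eta)<1$, exactly as you flag. What the paper's citation route buys is brevity and access to the sharper regularity statements ($\mathcal{C}^k$ manifolds for $\mathcal{C}^k$ data) proved in the references; what your route buys is a self-contained argument whose only nontrivial external input is the fiber-contraction theorem.
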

\begin{proof}
See \cite[Theorem 2.1]{mccracken1976hopf}, \cite[Theorem 6]{carr1982}, \cite[Theorem 4]{Osipenko2009} for existence and \cite[Theorem 2.1]{mccracken1976hopf}, \cite[Chapter V, Theorem 2]{iooss1979} for local attractivity.\qedhere
\end{proof}
The existence of a center manifold allows the study of a system with reduced dimensionality to determine the stability properties of the origin. Restricting the iteration \eqref{Form_of_G_transformed} to the center manifold, i.\,e. $\bw_2^0=\bh(\bw_1^0)$, gives
\begin{equation}\label{Flow_center_manifold}\bw_1^{n+1}= \mathcal G(\bw_1^{n}),\quad \mathcal G(\bw_1)=\bU \bw_1 + \bu(\bw_1,\bh(\bw_1))
\end{equation}
for $\norm{\bw_1^n}<\epsilon$.
The next theorem states that stability of the origin with respect to $\mathcal G$ already implies stability of the origin with respect to $\bG$.
\begin{thm}\label{Thm:Stab_CM}
\cite[Theorem 8]{carr1982} (Stability): Suppose the fixed point $\bzero\in \R^m$ of $\mathcal G$ from \eqref{Flow_center_manifold} is stable, asymptotically stable or unstable. Then the fixed point $\bzero \in \R^N$ of $\bG$ from \eqref{Form_of_G_transformed} is stable, asymptotically stable or unstable.
\end{thm}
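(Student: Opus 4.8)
The plan is to reduce the full dynamics of $\bG$ to the center manifold and transfer the stability type of the reduced map $\mathcal G$ by controlling how quickly orbits collapse onto the manifold. The first step I would take is to sharpen the local attractivity statement of Theorem~\ref{Thm:Ex_CM} into a \emph{geometric} rate of convergence. Introducing the deviation $\mbfz^n = \bw_2^n - \bh(\bw_1^n)$, whose vanishing characterizes the center manifold, and exploiting the invariance functional equation satisfied by $\bh$, namely $\bh(\bU\bw_1 + \bu(\bw_1,\bh(\bw_1))) = \bV\bh(\bw_1) + \bv(\bw_1,\bh(\bw_1))$, a short computation gives a recursion $\mbfz^{n+1} = \bV\mbfz^n + \bm{r}(\bw_1^n,\mbfz^n)$ with a remainder that is higher order in $\mbfz^n$. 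Since every eigenvalue of $\bV$ lies strictly inside the unit disc and since $\bu,\bv$ and $\bh$ have vanishing first derivatives at the origin, an adapted norm yields $\norm{\mbfz^{n+1}} \le \rho\norm{\mbfz^n}$ with $\rho\in(0,1)$ on a sufficiently small ball, hence $\norm{\mbfz^n}\le \rho^n\norm{\mbfz^0}$.

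Next I would rewrite the $\bw_1$-component of \eqref{Form_of_G_transformed} as a perturbation of the reduced map \eqref{Flow_center_manifold}, writing $\bw_1^{n+1} = \mathcal G(\bw_1^n) + \bigl(\bu(\bw_1^n,\bh(\bw_1^n)+\mbfz^n) - \bu(\bw_1^n,\bh(\bw_1^n))\bigr)$, where the parenthesized forcing is bounded by $L\norm{\mbfz^n}\le L\rho^n\norm{\mbfz^0}$ with a small Lipschitz constant $L$. Thus, along any orbit of $\bG$ remaining in the ball, the central variable $\bw_1^n$ follows the reduced dynamics of $\mathcal G$ up to a geometrically summable forcing, while $\bw_2^n$ is slaved to $\bw_1^n$ through $\bh$ because $\mbfz^n\to\bzero$.

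The instability case is then essentially direct: by the existence (invariance) part of Theorem~\ref{Thm:Ex_CM}, an initial point with $\bw_2^0 = \bh(\bw_1^0)$ has $\mbfz^n\equiv\bzero$, so the $\bG$-orbit coincides with the $\mathcal G$-orbit through $\bw_1^0$; since $\bh$ is continuous with $\bh(\bzero)=\bzero$, a $\mathcal G$-orbit that escapes a fixed neighborhood lifts to an escaping $\bG$-orbit, giving instability of $\bzero$ under $\bG$. For the stable and asymptotically stable cases I would establish an \emph{asymptotic phase} (shadowing) statement: for every sufficiently small orbit of $\bG$ there is a genuine orbit $\bar{\bw}_1^n$ of $\mathcal G$ with $\norm{\bw_1^n-\bar{\bw}_1^n}\to 0$. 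Applying stability of $\bzero$ under $\mathcal G$ to this shadowing orbit, together with $\norm{\mbfz^n}\to 0$, forces the full orbit to stay in the prescribed $\epsilon$-ball (stability) and, when $\bzero$ is asymptotically stable under $\mathcal G$, to converge to $\bzero$.

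The main obstacle is constructing this shadowing orbit. Because $\bU$ is only \emph{neutrally} stable, with all eigenvalues on the unit circle, a forward-in-time Gronwall-type estimate does not close: perturbations in the $\bw_1$-direction need not decay, so I cannot compare $\bw_1^n$ with the $\mathcal G$-orbit issued from the same initial point. Instead the phase $\bar{\bw}_1^0$ must be selected a posteriori, for instance through a fixed-point argument in a suitably weighted space of sequences in which the geometric decay $\rho^n$ of the forcing compensates the absence of contraction along the central directions. Quantifying precisely the neighborhood on which $\rho<1$ and on which the Lipschitz constants remain small enough to run this contraction is where the technical effort concentrates.
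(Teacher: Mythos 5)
The paper does not prove this statement at all; it is imported verbatim from the literature (Carr, Theorem~8), so the only meaningful comparison is with the proof in that reference. Your outline reproduces the standard strategy of that proof: geometric collapse of the deviation $\mbfz^n=\bw_2^n-\bh(\bw_1^n)$ onto the manifold via the invariance equation, the observation that instability transfers trivially because manifold orbits of $\bG$ are exactly orbits of $\mathcal G$, and the reduction of the stable and asymptotically stable cases to an asymptotic-phase (shadowing) property. The recursion $\mbfz^{n+1}=\bV\mbfz^n+\bm r(\bw_1^n,\mbfz^n)$ is correct, and your diagnosis of why a naive forward comparison fails --- $\bU$ is only neutrally stable, so the error equation for $\bw_1^n-\bar\bw_1^n$ does not contract --- is exactly the right one.

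The proposal nevertheless has a genuine gap, and it is the one you flag yourself: the asymptotic phase is asserted as a goal, not established. Without the a~posteriori selection of $\bar\bw_1^0$ (via a contraction in a $\rho^n$-weighted sequence space, or equivalently a Lyapunov--Perron integral equation solved backward along the center directions), the stable and asymptotically stable cases simply do not follow from what you have written; this construction is the substance of Carr's proof, not a routine afterthought. A second, unaddressed circularity compounds this: the estimates $\norm{\mbfz^{n+1}}\le\rho\norm{\mbfz^n}$ and the smallness of the Lipschitz constants of $\bu,\bv,\bh$ are only valid while the orbit remains in a fixed small ball, yet keeping the orbit in that ball is precisely the stability conclusion you are trying to prove. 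One must break this by an induction on $n$ (assume the orbit has stayed in the ball up to step $n$, use the estimates to show it remains there at step $n+1$), with the radius chosen in terms of the stability modulus of $\mathcal G$ \emph{before} the orbit is run. As it stands the argument proves instability transfer and nothing more; the other two cases require the missing fixed-point construction to be carried out.
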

In summary,  the stability of a non-hyperbolic fixed point $\by^*\in\R^N$ of a map $\bg$ can be determined by investigating the fixed point $\bzero\in\R^m$ of $\mathcal G$, which has a lower complexity due to the reduced dimension $m<N$.

To actually calculate the center manifold we need to solve
\begin{equation*}
(\bw_1^1, \bh(\bw_1^1))^T=\bG(\bw_1^0,\bh(\bw_1^0))=\Vec{\bU\bw_1^0+\bu(\bw_1^0,\bh(\bw_1^0))\\\bV\bh(\bw_1^0) +\bv(\bw_1^0,\bh(\bw_1^0))},
\end{equation*} 
which can be rewritten as
\[\bh(\bU\bw_1^0+\bu(\bw_1^0,\bh(\bw_1^0)))=\bV\bh(\bw_1^0) +\bv(\bw_1^0,\bh(\bw_1^0))\]
or 
\[\bh(\bG(\bw_1^0,\bh(\bw_1^0))_1)=\bG(\bw_1^0,\bh(\bw_1^0))_2.\]
The above invariance property offers a way to approximate the center manifold up to an arbitrary order.
\begin{thm}\label{Thm:Comp_func_h}
\cite[Theorem 7]{carr1982}
Let $\bh$ be a center manifold for $\bG$ and $\bm\Phi\in \mathcal{C}^1(\R^m, \R^l)$ with $\bm\Phi(\bzero)=\bzero$ and $\bD\bm\Phi(\bzero)=\bzero$. If
\[\bm\Phi(\bG(\bw_1,\bm\Phi(\bw_1))_1)-\bG(\bw_1,\bm\Phi(\bw_1))_2=\mathcal{O}(\lvert \bw_1 \rvert^q)\]  as $\bw_1\to \bzero$ for some $q>1$, then $\bh(\bw_1)=\bm\Phi(\bw_1)+\mathcal{O}(\lvert \bw_1 \rvert^q)$ as $\bw_1\to \bzero$.
\end{thm}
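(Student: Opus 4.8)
The plan is to reformulate the invariance property stated just before the theorem as a \emph{defect operator} and to show that this defect controls the distance between $\bm\Phi$ and the center manifold $\bh$ to the same order. For $\phi\in\mathcal C^1(\R^m,\R^l)$ with $\phi(\bzero)=\bzero$ and $\bD\phi(\bzero)=\bzero$, set
\[
N(\phi)(\bw_1)=\phi\bigl(\bG(\bw_1,\phi(\bw_1))_1\bigr)-\bG(\bw_1,\phi(\bw_1))_2 .
\]
The invariance identity $\bh(\bG(\bw_1,\bh(\bw_1))_1)=\bG(\bw_1,\bh(\bw_1))_2$ reads $N(\bh)\equiv\bzero$, while the hypothesis is $\norm{N(\bm\Phi)(\bw_1)}=\mathcal O(\abs{\bw_1}^q)$. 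The goal is thus an a priori estimate: smallness of the defect of $\phi$ forces $\phi$ to be $\mathcal O(\abs{\bw_1}^q)$-close to $\bh$.

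I would first localize the problem, replacing $\bu,\bv$ by globally Lipschitz cut-offs with arbitrarily small Lipschitz constant (this changes neither the germs at the origin nor $N$ near $\bzero$) and passing to a norm adapted to the spectrum of $\bV$, so that $\norm{\bV}\le\rho<1$. Since $\bU$ is invertible, the center component $P_\phi(\bw_1)=\bU\bw_1+\bu(\bw_1,\phi(\bw_1))$ is a local diffeomorphism with inverse $\beta_\phi=P_\phi^{-1}$, and the invariance equation becomes the fixed point equation $\phi=\mathcal T\phi$ for the Lyapunov--Perron operator
\[
\mathcal T\phi(\bw_1)=\bV\,\phi\bigl(\beta_\phi(\bw_1)\bigr)+\bv\bigl(\beta_\phi(\bw_1),\phi(\beta_\phi(\bw_1))\bigr).
\]
A short computation, using $P_\phi(\beta_\phi(\bw_1))=\bw_1$, yields the key identity $(\phi-\mathcal T\phi)(\bw_1)=N(\phi)(\beta_\phi(\bw_1))$.

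Next I would run the contraction argument in the weighted space with norm $\norm{\phi}_q=\sup_{\bw_1\neq\bzero}\norm{\phi(\bw_1)}/\abs{\bw_1}^q$. Because $\beta_\phi(\bw_1)=\bU^{-1}\bw_1+\mathcal O(\abs{\bw_1}^2)$ we have $\abs{\beta_\phi(\bw_1)}\le C\abs{\bw_1}$, so the key identity together with $\norm{N(\bm\Phi)}=\mathcal O(\abs{\bw_1}^q)$ gives $\norm{\bm\Phi-\mathcal T\bm\Phi}_q<\infty$. The estimates coming from $\bD\bu(\bzero,\bzero)=\bD\bv(\bzero,\bzero)=\bzero$ (small Lipschitz pieces) make $\mathcal T$ a contraction in $\norm{\cdot}_q$ with some constant $\kappa<1$, whose fixed point is a distinguished center manifold $\bh_\star$. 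From $\bh_\star=\mathcal T\bh_\star$ and the triangle inequality,
\[
\norm{\bm\Phi-\bh_\star}_q\le\norm{\bm\Phi-\mathcal T\bm\Phi}_q+\kappa\,\norm{\bm\Phi-\bh_\star}_q ,
\]
hence $\norm{\bm\Phi-\bh_\star}_q\le(1-\kappa)^{-1}\norm{\bm\Phi-\mathcal T\bm\Phi}_q<\infty$. Applying the same bound to the admissible choice $\phi=\bh$ (for which $N(\bh)=\bzero$, so the defect is $\mathcal O(\abs{\bw_1}^q)$ for every $q$) gives $\norm{\bh-\bh_\star}_q<\infty$; subtracting shows $\norm{\bm\Phi-\bh}_q<\infty$, i.e. $\bm\Phi(\bw_1)-\bh(\bw_1)=\mathcal O(\abs{\bw_1}^q)$, as claimed.

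The main obstacle is the neutral center direction. As $\bU$ has spectral radius one there is no contraction in the $\bw_1$-variable, so the whole gain must be squeezed out of $\bV$; this is exactly why the globalization and the weighted norm are needed. The delicate point is that the contraction constant of $\mathcal T$ in $\norm{\cdot}_q$ is governed by $\rho\,\norm{\bU^{-1}}^{q}$: one must adapt the norm so that $\norm{\bU^{-1}}$ is close to one (possible since the eigenvalues of $\bU$ lie on the unit circle) in order to keep this below $1$ for the fixed order $q$ at hand. A further technicality is the $\phi$-dependence of $\beta_\phi$, which must be shown to be Lipschitz with small constant so that it only perturbs the contraction estimate at higher order; granting these localization and spectral-gap bookkeeping steps, the argument above is routine.
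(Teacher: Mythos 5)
The paper itself offers no proof of this statement: it is quoted directly from Carr's monograph (Theorem~7 there), so there is no in-paper argument to compare yours against. On its merits, your proposal is the standard Lyapunov--Perron proof of the approximation theorem and is essentially the argument Carr uses for the flow version of the same result: globalize by cutting off $\bu,\bv$, rewrite invariance as a fixed-point equation $\phi=\mathcal T\phi$ in a weighted space, observe that the defect hypothesis makes $\bm\Phi$ an approximate fixed point, and conclude by a contraction estimate. Your key identity $(\phi-\mathcal T\phi)(\bw_1)=N(\phi)(\beta_\phi(\bw_1))$ is correct, and the bookkeeping of the contraction constant as roughly $\rho\,\norm{\bU^{-1}}^{q}$, with an adapted norm bringing $\norm{\bU^{-1}}$ down to the spectral radius $1$, is exactly the right point to worry about; note that in the application in this paper $m=l=1$ with $U=1$, so that step is vacuous there.

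Two points need tightening. First, the inequality $\norm{\bm\Phi-\bh_\star}_q\le\norm{\bm\Phi-\mathcal T\bm\Phi}_q+\kappa\,\norm{\bm\Phi-\bh_\star}_q$ is circular as written: you may only rearrange it if you already know $\norm{\bm\Phi-\bh_\star}_q<\infty$, which is precisely what is to be proved. The standard repair is to show that $\mathcal T$ maps the closed ball $\{\phi:\norm{\phi-\bm\Phi}_q\le(1-\kappa)^{-1}\norm{\bm\Phi-\mathcal T\bm\Phi}_q\}$ into itself --- this is where the approximate-fixed-point hypothesis is actually consumed --- and then to apply Banach's theorem on that ball, so the fixed point is produced inside it. Second, the identification of the given local center manifold $\bh$ with the Lyapunov--Perron fixed point $\bh_\star$ needs care: $N(\bh)=\bzero$ holds only on the neighborhood of the origin where the cut-off is inactive and where the Existence theorem provides invariance, so either restrict the supremum defining $\norm{\cdot}_q$ to that neighborhood (which suffices, since the conclusion is a statement about germs at $\bzero$) or invoke the known fact that any two local center manifolds agree to all orders at the origin. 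With those repairs the argument is complete and matches the proof in the cited source in structure.
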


Now, we consider a general positive and conservative iteration scheme $\by^{n+1}=\bg(\by^n)$, i.\,e. $\by^n>\bzero$ for all $n\in\N$ if $\by^0>\bzero$ and $\norm{\by^{n+1}}_1=\norm{\by^n}_1$, in two dimensions.
We further assume that all fixed points of the iteration scheme are located on a line through the origin, which is the case for the steady states of \eqref{PDS_test}.  
Under these circumstances, Theorem~\ref{Thm_MPRK_stabil} gives a sufficient condition for stability of the iteration scheme based on the eigenvalues of the corresponding Jacobian $\bD\bg(\by^*)$. Furthermore, the theorem states that stability implies convergence towards a fixed point with equal 1-norm.

The following lemma is used in the proof of Theorem~\ref{Thm_MPRK_stabil}.
\begin{lem}\label{Lem:alpha1=1}
Let $\by^0=(y_1^0,y_2^0)^T>\bzero$ and $L=\{\by\in\R^2_{>0}\mid\norm{\by}_1=\norm{\by^0}_1\}$,
then
\[L=\{\by\in\R^2\mid \by= \by^0 + s\bby,\ -y_1^0<s<y_2^0\},\]
where $\bby=(1,-1)^T$. If in addition, $\bg\from\R^2_{>0}\to\R^2_{>0}$ is a conservative map, i.\,e. $\norm{\bg(\by)}_1=\norm{\by}_1$, then for every $\by^0+t\bby>\bzero$ with $t\in\R$, there exists a $s(t)$ with $-y_1^0<s(t)<y_2^0$, such that
\[\bg(\by^0+t\bby)=\by^0+s(t)\bby.\]
\end{lem}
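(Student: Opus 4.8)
The plan is to treat the lemma as two essentially independent pieces: first the static set identity describing $L$ as an open line segment, and then the dynamic statement that a conservative, positivity-preserving map $\bg$ sends this segment into itself. The first piece is a pure reparametrization of an affine line restricted to the positive orthant, and the second follows almost immediately from the first once one notes that conservativity forces the $1$-norm of the image to be unchanged.

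For the set identity, I would start from the observation that on $\R^2_{>0}$ the $1$-norm reduces to $\norm{\by}_1 = y_1 + y_2$, so that $L$ is exactly the positive part of the affine line $\{y_1 + y_2 = \norm{\by^0}_1\}$. To show the inclusion of $\{\by^0 + s\bby : -y_1^0 < s < y_2^0\}$ in $L$, I would compute $\by^0 + s\bby = (y_1^0 + s,\, y_2^0 - s)^T$ and check that its components sum to $y_1^0 + y_2^0 = \norm{\by^0}_1$, while the two strict inequalities $-y_1^0 < s$ and $s < y_2^0$ are precisely what is needed to keep both components strictly positive. For the reverse inclusion, given $\by \in L$ I would set $s := y_1 - y_1^0$, so that automatically $y_1 = y_1^0 + s$ and, using $y_1 + y_2 = \norm{\by^0}_1$, also $y_2 = y_2^0 - s$; the positivity $y_1, y_2 > 0$ then yields the bounds $-y_1^0 < s < y_2^0$. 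This gives both inclusions and hence the claimed identity.

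For the second part, I would fix $t \in \R$ with $\by^0 + t\bby > \bzero$. By the set identity just established (equivalently, because its components are positive and sum to $\norm{\by^0}_1$), the point $\by^0 + t\bby$ lies in $L$, so $\norm{\by^0 + t\bby}_1 = \norm{\by^0}_1$. Applying $\bg$ and using that it is conservative gives $\norm{\bg(\by^0 + t\bby)}_1 = \norm{\by^0}_1$, while $\bg\from\R^2_{>0}\to\R^2_{>0}$ guarantees $\bg(\by^0 + t\bby) > \bzero$. Hence $\bg(\by^0 + t\bby) \in L$, and applying the set identity once more produces the desired $s(t)$ with $-y_1^0 < s(t) < y_2^0$ and $\bg(\by^0 + t\bby) = \by^0 + s(t)\bby$; uniqueness of $s(t)$ follows from the injectivity of $s \mapsto \by^0 + s\bby$ since $\bby \neq \bzero$.

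Honestly, there is no deep obstacle here; the lemma is elementary. The only point requiring care is the consistent bookkeeping of the strict inequalities defining the open segment: it is the strict positivity of the image under $\bg$, together with conservativity, that keeps $s(t)$ inside the open interval $(-y_1^0, y_2^0)$ rather than merely in its closure. I would therefore invoke $\bg\from\R^2_{>0}\to\R^2_{>0}$ explicitly at exactly this step.
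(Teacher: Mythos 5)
Your proposal is correct and follows essentially the same route as the paper: identify $L$ with the positive portion of the line $y_1+y_2=\norm{\by^0}_1$ parametrized by $\by^0+s\bby$ with $-y_1^0<s<y_2^0$, then apply conservativity and positivity of $\bg$ to place the image back on that segment. Your write-up is somewhat more explicit about the two inclusions in the set identity, but there is no substantive difference in approach.
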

\begin{proof}
Let $\by^0,\by>\bzero$, then the condition $\norm{\by}_1=\norm{\by^0}_1$ is equivalent to $y_1+y_2=y_1^0+y_2^0$. Geometrically, this describes a line in the $y_1$-$y_2$-coordinate system with normal vector $(1,1)^T$. A parameter form of this line is given by $\by=\by^0 + s(t)\bby$, $s(t)\in\R$. To ensure $\by>\bzero$, we must have $-y_1^0<s(t)<y_2^0$.

Now, define $\bw=\bg(\by^0+t\bby)$ with $\by^0+t\bby>\bzero$. Since $\norm{\bw}_1=\norm{\by^0+t\bby}_1=\norm{\by^0}_1$, we have $\bg(\by^0+t\bby)=\bw=\by^0+s(t)\bby$ with $-y_1^0<s(t)<y_2^0$.
\end{proof}
Next, we present the main theorem of this section, which provides criteria to assess the stability of general positive and conservative schemes.
Application of a general positive and conservative scheme to \eqref{PDS_test} results in a nonlinear iteration $\by^{n+1}=\bg(\by^n)$, for
which the steady states $\by^*$ of \eqref{PDS_test} should be non-hyperbolic fixed points of $\bg$.
The theorem shows that even in this nonlinear case the investigation of the eigenvalues of the Jacobian $\bD\bg(\by^*)$ is sufficient to analyze stability. 
To the authors knowledge there are no similar results focusing on general positive and conservative schemes in the literature even though the statements are of fundamental importance 
\begin{thm}\label{Thm_MPRK_stabil}
Let $\bg\in \mathcal{C}^2(\R^2_{>0})$ with fixed point $\by^*>\bzero$, such that all $r\by^*$ are fixed points of $\bg$ for all $r>0$. In addition, let the iterates of the iteration scheme  $\by^{n+1}=\bg(\by^n)$ satisfy $\lVert \by^{n+1} \rVert_1=\lVert \by^n \rVert_1$ for all $n\in \N_0$. Then, the spectrum of the Jacobian $\bD\bg(\by^*)$ is $\sigma(\bD\bg(\by^*))=\{1,R\}$ with $R\in\R$, and the following statements apply.
\begin{enumerate}
\item If $\lvert R \rvert <1$, then $\by^*$ is stable.
\item If $\by^*$ is stable, then there exists a $\delta>0$, such that $\Vert \by^{0} \Vert_1=\Vert \by^* \Vert_1$ and $\norm{\by^0-\by^*}<\delta$ imply $\by^n\to \by^*$ as $n\to \infty$.
\end{enumerate}
\end{thm}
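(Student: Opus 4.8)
The plan is to read off the spectrum from the two structural hypotheses and then feed the resulting non-hyperbolic fixed point into the center manifold machinery of Section~\ref{Sec:Center_Manifold}. Differentiating the fixed-point relation $\bg(r\by^*)=r\by^*$ in $r$ at $r=1$ gives $\bD\bg(\by^*)\by^*=\by^*$, so $\by^*$ is a right eigenvector of $\bD\bg(\by^*)$ for the eigenvalue $1$; differentiating the conservation identity $\mathbf 1^T\bg(\by)=\mathbf 1^T\by$ (which holds pointwise on the positive domain, with $\mathbf 1=(1,1)^T$, since every iterate is positive) shows $\mathbf 1$ is a corresponding left eigenvector. Hence $1\in\sigma(\bD\bg(\by^*))$, and since a real $2\times2$ matrix possessing the real eigenvalue $1$ must have a real second eigenvalue $R=\tr\bD\bg(\by^*)-1$, we obtain $\sigma(\bD\bg(\by^*))=\{1,R\}$ with $R\in\R$. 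Because $|1|=1$ the fixed point is non-hyperbolic, so Theorem~\ref{Thm:_Asym_und_Instabil} is inconclusive and the center manifold reduction is required.

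For statement a) I would pass to the coordinates of \eqref{Form_of_G}: translate $\by^*$ to the origin and apply a linear map diagonalizing $\bD\bg(\by^*)$ into the blocks $\bU=(1)$ and $\bV=(R)$, which is admissible precisely because $|R|<1$. The decisive observation is that this affine transformation carries the fixed-point line $\{r\by^*\}$ onto the $\bw_1$-axis, and since every point of that line is fixed under $\bg$, the conjugated map obeys $\bG(\bw_1,\bzero)=(\bw_1,\bzero)^T$; comparison with \eqref{Form_of_G} forces $\bu(\bw_1,\bzero)=\bzero$ and $\bv(\bw_1,\bzero)=\bzero$ for all $\bw_1$. Consequently $\bh\equiv\bzero$ solves the invariance equation and is a center manifold, and the reduced map \eqref{Flow_center_manifold} collapses to $\mathcal G(\bw_1)=\bw_1+\bu(\bw_1,\bzero)=\bw_1$, the identity, whose fixed point $\bzero$ is trivially Lyapunov stable. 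Theorem~\ref{Thm:Stab_CM} then transfers stability to $\bzero$ for $\bG$, and hence to $\by^*$ for $\bg$.

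For statement b) I would combine these coordinates with conservation. By Lemma~\ref{Lem:alpha1=1} the line $\{\by^*+s\bby\}$ is invariant under $\bg$ and carries an induced scalar $\mathcal{C}^2$ map $\phi$ via $\bg(\by^*+s\bby)=\by^*+\phi(s)\bby$ with $\phi(0)=0$; differentiating at $s=0$ gives $\bD\bg(\by^*)\bby=\phi'(0)\bby$, and since $\bby$ is not parallel to the $1$-eigenvector $\by^*$, it spans the $R$-eigenspace and $\phi'(0)=R$. In the diagonalizing coordinates the invariant line is therefore exactly the $\bw_2$-axis $\{\bw_1=\bzero\}$, so an orbit with $\norm{\by^0}_1=\norm{\by^*}_1$ satisfies $\bw_1^n\equiv\bzero$. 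When $|R|<1$, I would choose $\delta$ from the assumed stability of $\by^*$ so that the whole orbit stays in the $\epsilon$-neighborhood of Theorem~\ref{Thm:Ex_CM}; local attractivity \ref{item:attractivity} with $\bh\equiv\bzero$ then gives $\norm{\bw_2^n}\to\bzero$, whence $(\bw_1^n,\bw_2^n)\to\bzero$ and thus $\by^n\to\by^*$.

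The main obstacle I anticipate is the boundary regime $|R|=1$, i.e.\ $R=\pm1$, which is compatible with the hypothesis of b) but in which the contracting block $\bV$ is absent and the local-attractivity argument breaks down. Here the scalar map $\phi$ on the invariant segment has $\phi(0)=0$, $\phi'(0)=R$, and $0$ as its \emph{unique} fixed point (the only fixed point of $\bg$ on the segment is $\by^*$), while stability makes $0$ a stable fixed point of $\phi$. The remaining task is to exclude neutral or period-two behaviour near $0$ — the scalar reflection $\phi(s)=-s$ being the prototypical stable-but-non-convergent obstruction for $R=-1$. Closing this case cannot rely on the linearization alone; I would instead use that $\phi$ maps the positivity segment $(-y_1^*,y_2^*)$ into itself, together with conservation, to rule out a $2$-cycle or a non-convergent neutral orbit, and I expect this monotonicity argument on the invariant segment, rather than the center manifold reduction, to be the technically delicate part of the proof.
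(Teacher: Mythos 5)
Your argument coincides with the paper's proof in all essentials: eigenvalue $1$ with eigenvector $\by^*$ from differentiating along the fixed-point ray, the second real eigenvalue $R$ attached to the direction $\bby$ of the conservation line (the paper obtains $R\in\R$ by computing $\bD\bg(\by^*)\bby=R\bby$ via Lemma~\ref{Lem:alpha1=1} rather than via your left eigenvector $(1,1)^T$, but the two derivations are equivalent), conjugation by $\bS=(\by^*\ \bby)$ into the normal form \eqref{Form_of_G}, the center manifold $\bh\equiv\bzero$ with reduced map equal to the identity, Theorem~\ref{Thm:Stab_CM} for a), and local attractivity on the slice $w_1^n\equiv 0$ for b). For a), and for b) under $\abs{R}<1$, your proposal is correct and follows the same route as the paper.

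Where you diverge is the final paragraph on $\abs{R}=1$, and there your diagnosis is sharper than your proposed cure. You are right that the hypothesis of b) only excludes $\abs{R}>1$ and that the paper's proof of b) silently imports the setting of a) (it cites the stability established there and needs the block $V=R$ to be a contraction for Theorem~\ref{Thm:Ex_CM}~\ref{item:attractivity}). But the repair you sketch --- using positivity and conservation on the invariant segment to exclude a neutral or period-two orbit --- cannot succeed, because the obstruction is realizable within the hypotheses: $\bg(y_1,y_2)=(y_2,y_1)$ is $\mathcal C^\infty$, conservative, fixes every $r(1,1)^T$, and is a $\norm{\cdot}_1$-isometry, so every fixed point is Lyapunov stable, yet $R=-1$ and every orbit on the line $y_1+y_2=2$ off the diagonal is $2$-periodic and does not converge to $(1,1)^T$; the identity map plays the same role for $R=1$. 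Hence statement b) is genuinely false at $\abs{R}=1$, and the only consistent reading is the one the paper actually uses in Corollaries~\ref{cor:MPRK22stabil} and \ref{cor:unstable}, namely b) under the standing assumption $\abs{R}<1$ from a); no monotonicity argument on the segment will close that case, so you should not plan to prove it.
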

\begin{proof}
Throughout this proof, we use the notation $\bby=(1,-1)^T$ and $\be_1=(1,0)^T$, $\be_2=(0,1)^T$ to denote the standard unit vectors.

First, we compute the eigenvalues and eigenvectors of $\bD\bg(\by^*)$. Since $\bg$ is differentiable in $\by^*$ the directional derivatives $\partial_\bv \bg(\by^*)=\bD\bg(\by^*)\bv$ exist for all directions $\bv\in\R^2$. Hence,
\[\bD\bg(\by^*)\by^*=\partial_{\by^*}\bg(\by^*)=\lim_{h\to 0}\frac1h\bigl(\bg(\by^*+h\by^*)-\bg(\by^*)\bigr)=\lim_{h\to 0}\frac1h\bigl(\bg((1+h)\by^*)-\by^*\bigr). \]
As $r\by^*>\bzero$ is a fixed point of $\bg$ for all $r>0$, we see
\[\bD\bg(\by^*)\by^*=\lim_{h\to 0}\frac1h\bigl((1+h)\by^*-\by^*\bigr)=\by^*. \]
Thus, $\by^*$ is an eigenvector of $\bD\bg(\by^*)$ with associated eigenvalue $1$.
To compute the other eigenvalue and eigenvector, we consider the directional derivative
\begin{equation}\label{eq:directional_derivative}
\bD\bg(\by^*)\bby=\partial_{\bby}\bg(\by^*)=\lim_{h\to 0}\frac1h\bigl(\bg(\by^*+h\bby)-\bg(\by^*)\bigr). \end{equation}
Since $\norm{\bg(\by)}_1=\norm{\by}_1$, we can use Lemma~\ref{Lem:alpha1=1} to see
\[\bg(\by^*+h\bby)=\by^*+ s(h)\bby,\]
for sufficiently small $h$ and some function $s\from\R\to\R$, $h\mapsto s(h)$.
Inserting this into \eqref{eq:directional_derivative} yields
\begin{equation*}
\bD\bg(\by^*)\bby=\lim_{h\to 0}\frac1h\bigl(\by^* + s(h)\bby-\by^*\bigr)=\biggl(\lim_{h\to 0}\frac{s(h)}h\biggr)\bby. 
\end{equation*}
The above limit exists, since $\bg$ is differentiable in $\by^*$. Setting $R=\lim_{h\to 0}\frac{s(h)}h\in\R$, we see $\bD\bg(\by^*)\bby=R\bby$, i.\,e. $\bby$ is an eigenvalue of $\bD\bg(\by^*)$ with corresponding eigenvalue $R$.
Hence, the spectrum of $\bD\bg(\by^*)$ is given by $\sigma(\bD\bg(\by^*))=\{1,R\}$.
Introducing the matrix of eigenvectors
\begin{equation}\label{eq:Smatrix}
\bS=(\by^* \bby),
\end{equation}
which is invertible, since $\bby$ cannot be a multiple of the positive vector $\by^*$, we obtain
\begin{equation}\label{eq:DG_digaonal}
\bS^{-1}\bD\bg(\by^*)\bS=\diag(1,R),
\end{equation}
where $\diag(\by)\in\R^{2\times 2}$ denotes the diagonal matrix with $(\diag(\by))_{ii}=y_i$ for $i=1,2$.
\begin{enumerate} 
\item\label{item:stability}
In this part we assume $\abs{R}<1$ and use the center manifold Theorem~\ref{Thm:Ex_CM}~\ref{item:existence} in combination with Theorem~\ref{Thm:Stab_CM} to conclude that this implies  that $\by^*$ is a stable fixed point.
The theorem requires a map $\bG$ of form \eqref{Form_of_G}, which shall be obtained from $\bg$ by means of an affine linear transformation.

We consider the affine transformation $\bT\from\R^2\to\R^2$, $\by\mapsto\bw=\bT(\by)=\bS^{-1}(\by-\by^*)$, where $\bS$ is given in \eqref{eq:Smatrix} and
the inverse transformation $\bT^{-1}$ is given by $\bT^{-1}(\bw)=\bS\bw+\by^*$.
By construction, the line segment of fixed points $\{\by\in\R_{>0}^2\mid\by=r\by^*,\ r>0\}$ is mapped onto the $w_1$-axes, as
$\bT(r\by^*)=\bS^{-1}(r\by^*-\by^*)=(r-1)\bS^{-1}\by^*=(r-1)\be_1$.
The line segment $\{\by\in\R^2_{>0}\mid\norm{\by}_1=\norm{\by^*}_1\}=\{\by\in\R^2\mid \by= \by^* + s\bby,\ -y_1^*<s<y_2^*\}$, see Lemma~\ref{Lem:alpha1=1}, is mapped onto the $w_2$-axes, since
$\bT(\by^*+s\bby)=\bS^{-1}(\by^*+s\bby-\by^*)=s\bS^{-1}\bby=s\be_2$.
In particular, $\by^*$ is mapped to the origin and parallel lines are mapped onto parallel lines, since $\bT$ is affine.
See Figure~\ref{fig:affine_transf} for a sketch of this situation.
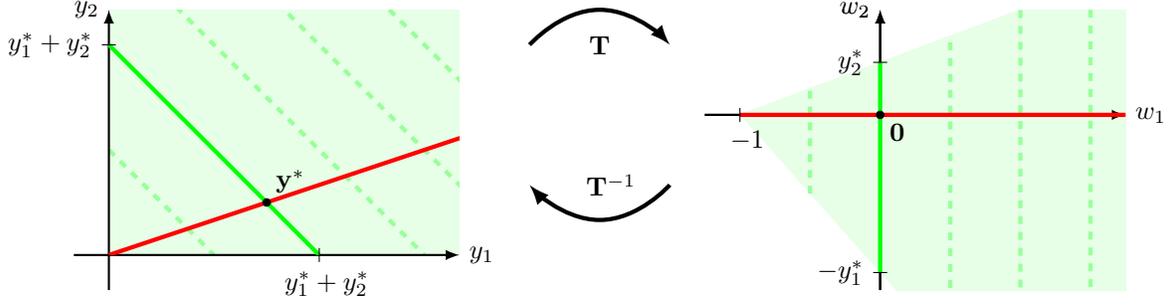
\begin{figure}
\begin{center}
\begin{scaletikzpicturetowidth}{\textwidth}
\begin{tikzpicture}[scale=\tikzscale]
\kosgrid{-.5}{5}{-.5}{3.5}{$y_1$}{$y_2$}{0,0}
\begin{scope}
\clip (0,0) rectangle (5,3.5);
\filldraw[green!10] (0,0) -- (5,0) -- (5,3.5) -- (0,3.5);
\draw[green!40,lineplot,ultra thick, dashed] (0,1.5) -- (1.5,0);
\draw[green!40,lineplot,ultra thick, dashed] (0,4.5) -- (4.5,0);
\draw[green!40,lineplot,ultra thick, dashed] (0,6) -- (6,0);
\draw[green!40,lineplot,ultra thick, dashed] (0,7.5) -- (7.5,0);
\kosgrid{-.5}{5}{-.5}{3.5}{$y_1$}{$y_2$}{0,0}
\draw[red,lineplot,ultra thick] (0,0) -- (6,2);
\draw[green,lineplot,ultra thick] (0,3) -- (3,0);
\filldraw[black] (2.25,0.75) circle[radius=1.5pt] node[above right]{$\by^*$};
\end{scope}
\xticks{3}{\tickl}{$y_1^*+y_2^*$}{below, xshift=+.1cm}
\yticks{3}{\tickl}{$y_1^*+y_2^*$}{left}
\begin{scope}[shift={(11,2)},xscale=2]
\kosgrid{-1.25}{1.75}{-2.5}{1.5}{$w_1$}{$w_2$}{0,0}
\begin{scope} 
\clip (-1.25,-2.5) rectangle (1.75,1.5);
\filldraw[green!10] (-1,0) -- (2,2.25) -- (2,-6.75);
\kosgrid{-1.25}{1.75}{-2.5}{1.5}{$w_1$}{$w_2$}{0,0}
\draw[green!40,lineplot,ultra thick,dashed] (-.5,-1.125) -- (-.5,.375);
\draw[green!40,lineplot,ultra thick,dashed] (0.5,-3.3750) -- (.5,1.125);
\draw[green!40,lineplot,ultra thick,dashed] (1,-4.5) -- (1,1.5);
\draw[green!40,lineplot,ultra thick,dashed] (1.5,-5.6250) -- (1.5,1.8750);
\draw[green,lineplot,ultra thick] (0,-2.25) -- (0,.75);
\draw[red,lineplot,ultra thick] (-1,0) -- (1.75,0);
\filldraw[black] (0,0) circle[radius=1.5pt,xscale=0.5] node[below right]{$\bzero$};
\end{scope}
\xticks{-1}{\tickl}{$-1$}{below, xshift=+.1cm}
\yticks{-2.25}{\tickl/2}{$-y_1^*$}{left}
\yticks{0.75}{\tickl/2}{$y_2^*$}{left}
\end{scope}
\draw [thick arrow] plot [smooth, tension=1.0] coordinates { (6,3)  (7,3.5) (8,3) };
\node at (7,3) {$\bT$};
\draw [thick arrow] plot [smooth, tension=1.0] coordinates { (8,1)  (7,0.5) (6,1) };
\node[xshift=.15cm] at (7,1) {$\bT^{-1}$};
\end{tikzpicture}
\end{scaletikzpicturetowidth}
\end{center}
\caption{Visualisation of the affine transformation $\bT$. The red and green line segments are mapped onto each other.}
\label{fig:affine_transf}
\end{figure}

Now we define
\begin{equation}
\bG\from \bT(\R_{>0}^2)\to \bT(\R_{>0}^2),\quad \bG(\bw) = \bT(\bg(\bT^{-1}(\bw)))\label{eq:TgTinv}
\end{equation}
and observe that the origin is a fixed point of $\bG$. To represent $\bG$ in the form \eqref{Form_of_G}, we write $\bg$ as
\begin{equation}
\bg(\by)=\bg(\by^*)+\bD\bg(\by^*)(\by-\by^*)+\bR(\by)=\by^*+\bD\bg(\by^*)(\by-\by^*)+\bR(\by),\label{yn+1_Lagrange_remainder}
\end{equation}
where the remainder $\bR(\by)$ can be written in Lagrange form
\begin{align}
(\bR(\by))_i=\frac{1}{2}(\by-\by^*)^T\bH g_i(\by^*+c(\by-\by^*))(\by-\by^*),\quad i=1,2,\label{Lagrange_remainder}
\end{align}
with $\bH g_i$ denoting the Hessian of $g_i$ and $c\in(0,1)$ depends on $\by$ and $\by^*$. 
This is possible because $\bg$ is assumed to be in $\mathcal C^2$ on the convex set $\R^2_{>0}$.
 In particular, we have
\begin{align}
{\bR}(\by^*)=\bzero,\quad
\bD {\bR}(\by^*)=\bzero.\label{R(0)=0,DR(0)=0}
\end{align}
By inserting \eqref{yn+1_Lagrange_remainder} in \eqref{eq:TgTinv} we obtain
\begin{equation*}
\bG(\bw) = \bS^{-1}\bigl(\bD\bg(\by^*)(\bT^{-1}(\bw)-\by^*)+\bR(\bT^{-1}(\bw))\bigr)=\bS^{-1}\bD\bg(\by^*)\bS\bw + \bS^{-1}\bR(\bT^{-1}(\bw))
\end{equation*}
and using \eqref{eq:DG_digaonal}, we see
\begin{equation}\label{eq:G}
\bG(\bw) = \diag(1,R)\bw + \bS^{-1}\bR(\bT^{-1}(\bw)).
\end{equation}
With $\bw=(w_1,w_2)^T$ this can be rewritten as
\begin{equation}\label{eq:G_form}
\bG(w_1,w_2)=\Vec{U w_1+u(w_1,w_2)\\V w_2+v(w_1,w_2)}
\end{equation} 
where 
\begin{align}\label{eq:UVuv}
U&=1, & V&= R, & u(w_1,w_2)&=\bigl(\bS^{-1}{\bR}(\bT^{-1}(w_1,w_2))\bigr)_1, & v(w_1,w_2)&=\bigl(\bS^{-1}{\bR}(\bT^{-1}(w_1,w_2))\bigr)_2.
\end{align}
The eigenvalues of $U$ have absolute value 1 and those of $V$ have absolute value $\abs{R}<1$. Furthermore, we conclude from \eqref{R(0)=0,DR(0)=0} that
$u(0,0) = v(0,0)=0$, since $\bR(\by^*)=\bzero$, and $\bD u(0,0) = \bD v(0,0)=0$, since $\bD\bR(\by^*)=\bzero$.
Thus, \eqref{eq:G} is of form \eqref{Form_of_G}.

Now, the center manifold theorem~\ref{Thm:Ex_CM}~\ref{item:existence} states that for some $\epsilon>0$ there exists a $\mathcal{C}^2$ function $h\from\R\to \R$ with $h(0)=0$ and $h'(0)=0$ , such that $(w_1^1,w_2^1)^T=\bG(w_1^0,h(w_1^0))$ and  $\abs{w_1^0} <\epsilon$ implies $w_2^1=h(w_1^1)$. Furthermore, $\epsilon$ can be further reduced, if necessary, to guarantee $(w_1,h(w_1))\in \bT(\R_{>0}^2)$ for $\lvert w_1\rvert <\epsilon$ as $h$ is continuous with $h(0)=0$.
We know from Lemma~\ref{Lem:alpha1=1}, that for all $\by^0>\bzero$ the map $\bg$ is invariant on the line segment $\{\by\in\R^2_{>0}\mid\norm{\by}_1=\norm{\by^0}_1\}=\{\by\in\R^2\mid \by= \by^0 + s\bby,\ -y^0_1<s<y^0_2\}$. The transformation $\bT$ maps such a line segment onto a segment of a line parallel to the $w_2$-axes, on which $\bG$ must then be invariant. To verify this, we use Lemma~\ref{Lem:alpha1=1} and compute
\begin{align*}
\bG(w_1,w_2)&=\bG(w_1\be_1 + w_2\be_2) =  \bT(\bg(\bT^{-1}(w_1\be_1+w_2\be_2)))
=  \bT(\bg(w_1\bS\be_1+w_2\bS\be_2+\by^*))\\
&=  \bT(\bg((1+w_1)\by^*+w_2\bby))
=  \bT((1+w_1)\by^*+ s(w_2) \bby)\\
& = \bS^{-1}(w_1\by^*+s(w_2)\bby)=w_1\be_1+s(w_2)\be_2=(w_1,s(w_2))^T
\end{align*}
for some suitable $s=s(w_2)\in\R$. 
In particular, we have shown
\begin{equation}\label{eq:G1_constant}
\bG(w_1,w_2)_1 = w_1.
\end{equation}

We can now consider the iteration scheme 
\begin{equation*}
w_1^{n+1}= \mathcal G(w_1^n),\quad \mathcal G(w_1)=U w_1 + u(w_1,h(w_1))
\end{equation*}
for $\abs{w_1^n}<\epsilon$, where $U$ and $u$ are given in \eqref{eq:UVuv}.
According to Theorem~\ref{Thm:Stab_CM}, the fixed point $\bzero\in\R^2$ of $\bG$ is stable, if the fixed point $0\in\R$ is a stable fixed point of $\mathcal G$.
From \eqref{eq:G_form} and \eqref{eq:G1_constant} we see
\begin{equation*}
\mathcal G(w_1)=\bG(w_1,h(w_1))_1=w_1,
\end{equation*}
which implies $w_1^{n}=\mathcal G(w_1^{n-1})=w_1^0$ for all $n\in\N$ and every $w_1^0$ with $\abs{w_1^0}<\epsilon$. Consequently, for every $\widetilde\epsilon>0$ we define $\widetilde\delta = \min(\widetilde\epsilon,\epsilon)$ to obtain that $\abs{w_1^0}<\widetilde\delta$ implies $\abs{w_1^n}=\abs{w_1^0}<\widetilde\delta\leq \widetilde\epsilon$. Thus, $0$ is a stable fixed point of $\mathcal G$ in the sense of Definition~\ref{Def_Lyapunov_Diskr}~\ref{def:stab}.
Furthermore, by Theorem~\ref{Thm:Stab_CM} the fixed point $\bzero\in\R^2$ of $\bG$ is stable as well.

As a last step, we show that the above implies that $\by^*$ is a stable fixed point of $\bg$.
We know that $\bzero$ is a stable fixed point of the iteration scheme $\bw^{n+1}=\bG(\bw^n)$, that is for every $\epsilon_w>0$ exists $\delta_w>0$ such that $\norm{\bw^0}<\delta_w$ implies $\norm{\bw^n}<\epsilon_w$. Now, let $\epsilon>0$, $\epsilon_w=\epsilon/\norm{\bS}$ and  $\delta=\delta_w/\norm{\bS^{-1}}$. If $\norm{\by^0-\by^*}<\delta$, then $\norm{\bw^0}=\norm{\bT(\by^0)}=\norm{\bS^{-1}(\by^0-\by^*)}\leq \norm{\bS^{-1}}\norm{\by^0-\by^*}<\norm{\bS^{-1}}\delta=\delta_w$ and consequently $\norm{\bw^n}<\epsilon_w$. Furthermore, $\bw^n=\bT(\by^n)=\bS^{-1}(\by^n-\by^*)$ is equivalent to $\bS\bw^n=\by^n-\by^*$ and hence,  $\norm{\by^n-\by^*}\leq\norm{\bS}\norm{\bw^n}<\norm{\bS}\epsilon_w=\epsilon$. Thus, we have shown that $\by^*$ is a stable fixed point of the iteration $\by^{n+1}=\bg(\by^n)$.

\item
In the following we use that the center manifold is given by $\{(w_1,w_2)\in\R^2\mid w_2=0,\ \lvert w_1\rvert <\epsilon\}$, i.\,e. $h(w_1)=0$, for a sufficiently small $\epsilon>0$.  This can be shown with Theorem~\ref{Thm:Comp_func_h} as follows.
The function $\Phi\from\R\to\R$, $\Phi(w_1)=0$ satisfies $\Phi(0)=\Phi'(0)=0$. 
Furthermore, all points on the $w_1$-axes are fixed points of $\bG$, since
\begin{align*}
\bG(w_1,0)&=\bG(w_1\be_1) =  \bT(\bg(\bT^{-1}(w_1\be_1)))
=  \bT(\bg(w_1\bS\be_1+\by^*))\\
&=  \bT(\bg((1+w_1)\by^*))
=  \bT((1+w_1)\by^*)\\
& = \bS^{-1}(w_1\by^*)=w_1\be_1=(w_1,0)^T.
\end{align*}
Hence, it follows that
\begin{equation*}
\Phi(\bG(w_1,\Phi(w_1))_1) - \bG(w_1,\Phi(w_1))_2=-\bG(w_1,0)_2=0.
\end{equation*}
By Theorem~\ref{Thm:Comp_func_h} $\Phi$ is an approximation of $h$ for any order $q>1$. Thus, $h(w_1)=\Phi(w_1)=0$ for $\abs{w_1}<\epsilon$.

Now, we prove that the iteration scheme $\by^{n+1}=\bg(\by^n)$ is locally convergent to $\by^*$, if the starting value $\by^0$ is sufficiently close to $\by^*$ and satisfies $\norm{\by^0}_1=\norm{\by^*}_1$. Taking advantage of the transformation $\bT$, this is equivalent to proving that $\bw^{n+1}=\bG(\bw^n)$ is locally convergent to $\bzero$, if the starting value $\bw^0$ is sufficiently close to $\bzero$ and satisfies $w_1^0=0$. 
Moreover, since $\bG$ leaves the first component of its argument fixed, see \eqref{eq:G1_constant}, we only need to show $w_2^n\to 0$ for $n\to\infty$.
According to Theorem~\ref{Thm:Ex_CM}~\ref{item:attractivity}
 the distance of $(w_1^n,w_2^n)\in\R^2$ to the center manifold tends to zero for $n\to \infty$, i.\,e.\ $\lvert w_2^n\rvert\to 0$ for $n\to \infty$, if $\lvert w_1^n\rvert,\lvert w_2^n\rvert<\epsilon$ for all $n\in \N_0$ and some sufficiently small $\epsilon>0$. Finally, since the origin is a stable fixed point of $\bG$, as shown in \ref{item:stability}, there exists $\delta>0$ such that $\norm{\bw^0}_\infty<\delta$ implies $\norm{\bw^n}_\infty=\max\{\abs{w_1^n},\abs{w_2^n}\}<\epsilon$ for all $n \in\N$. If necessary, $\norm{\bw^0}_\infty <\epsilon$ can be assured by choosing $\delta<\epsilon$.   Altogether, this proves $\norm{\bw^n}_\infty=\max\{\abs{w_1^n},\abs{w_2^n}\}<\epsilon$ for all $n \in\N_0$ and thus the assertion.\qedhere
\end{enumerate}
\end{proof}
We like to highlight that Theorem~\ref{Thm_MPRK_stabil} is valid for general positive and conservative schemes and not restricted to MPRK schemes. 

\section{Stability of MPRK22 schemes}\label{sec:stab_mprk22}
In this section we use Theorem~\ref{Thm_MPRK_stabil} to examine the stability properties of MPRK22($\alpha$) and MPRK22ncs($\alpha$) schemes. The main task is to express the schemes in the form $\by^{n+1}=\bg(\by^n)$ and to compute the eigenvalues of the Jacobian $\bD\bg(\by^*)$ for a given fixed point $\by^*$.
We show that MPRK22($\alpha$) schemes for all permissible $\alpha$ and MPRK22ncs($\alpha$) schemes with $\alpha\geq 1$  are unconditionally stable. Hence, the iterates of these methods locally converge towards a fixed point with equal 1-norm.
Furthermore, for MPRK22ncs($\alpha$) schemes with $\alpha< 1$ time step restrictions are necessary to achieve the same behavior.

To give a clear representation of the MPRK iterations in the form $\by^{n+1}=\bg(\by^n)$, we define some auxiliary matrices and functions. First, we split the matrix $\bA$ into a production part $\bA_P$ and a destruction part $\bA_D$, i.\,e.
\begin{equation}\label{eq:A_splitting}\bA=\bA_P-\bA_D,\quad \bA_P = \bA-\diag(\bA)= \Mat{0 & b\\a & 0},\quad \bA_D=-\diag(\bA)=\Mat{a&0\\0&b}.
\end{equation}
Based on this splitting we define the matrices 
\begin{equation} 
\bB_\gamma =\gamma\bigl(\bI-\alpha\Delta t\bA\bigr)^{-1}+ (1-\gamma)\bigl(\bI+\alpha\Delta t\bA_D\bigr)^{-1}\bigl(\bI+\alpha\Delta t\bA_P\bigr)\label{def_B_gamma}
\end{equation}
and
\begin{equation}
\bC_\gamma=\biggl(1-\frac{1}{2\alpha}\biggr)\bI+\biggl(\frac{1}{2\alpha}\biggr)\bB_\gamma, \label{def_C}
\end{equation}
where $\bI\in\R^{2\times 2}$ denotes the identity matrix.
Furthermore, we need the nonlinear functions 
$\bssigma(\by)=(\sigma_1(\by),\sigma_2(\by))^T$ and $\btau(\by)=(\tau_1(\by),\tau_2(\by))^T$ with
\begin{equation}\label{def_sigma}
\sigma_i(\by)=(\bB_\gamma \by)_i^{\frac{1}{\alpha}}(y_i)^{1-\frac{1}{\alpha}}
\end{equation}
and
\begin{equation}\label{def_h}
\tau_i(\by)= \frac{(\bC_\gamma\by)_i}{\sigma_i(\by)}
\end{equation} 
for $i=1,2$.
\begin{prop}\label{prob:f}
The MPRK22 schemes \eqref{eq:MPRK22b} applied to the test equation \eqref{PDS_test}, can be written in the form $\by^{n+1}=\bg(\by^n)$, where
the map $\bg\colon\R_{>0}^2\to \R_{>0}^2$ is given by
\begin{equation}
\bg(\by)=\biggl(\bI + \frac{\Delta t}{1 + \Delta t(a\tau_1(\by)+b\tau_2(\by))}\bA\diag\bigl(\btau(\by)\bigr)\biggr)\by,\label{def_g(y)}
\end{equation}
with $\btau$ defined by \eqref{def_h}.
\end{prop}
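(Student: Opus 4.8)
The plan is to process the three stages of \eqref{eq:MPRK22b} in order, exploiting the linearity of the production and destruction terms for the test problem \eqref{PDS_test} to collapse every sum into a matrix--vector product, and only at the very end to solve the resulting implicit linear relation for $\by^{n+1}$.

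First I would treat the second stage \eqref{eq:gamma}. Using \eqref{pij_dij_Test_eq} together with the splitting \eqref{eq:A_splitting}, I would record the identities $p_{ij}(\by)=(\bA_P)_{ij}y_j$ and $\sum_{j}d_{ij}(\by)=(\bA_D\by)_i$, where $\bA_D$ is diagonal. With $\by^{(1)}=\by^n$ this turns the production terms $p_{ij}(\by^{(1)})y_j^{(2)}/y_j^{(1)}$ into $(\bA_P\by^{(2)})_i$ and the destruction terms into $(\bA_D\by^{(2)})_i$, so that \eqref{eq:gamma} becomes the linear system
\[
\bigl(\bI+\alpha\Delta t\bA_D-\gamma\alpha\Delta t\bA_P\bigr)\by^{(2)}=\bigl(\bI+(1-\gamma)\alpha\Delta t\bA_P\bigr)\by^n.
\]
Evaluating this for the two relevant values $\gamma=0$ and $\gamma=1$ and comparing with \eqref{def_B_gamma} (using $\bA_D-\bA_P=-\bA$ when $\gamma=1$) yields $\by^{(2)}=\bB_\gamma\by^n$. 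The immediate payoff is that the Patankar denominators $(y_j^{(2)})^{1/\alpha}(y_j^{(1)})^{1-1/\alpha}$ appearing in \eqref{eq:y_n_plus_1} are exactly $\sigma_j(\by^n)$ from \eqref{def_sigma}, and that $(1-\tfrac1{2\alpha})y_j^n+\tfrac1{2\alpha}y_j^{(2)}=(\bC_\gamma\by^n)_j$ by \eqref{def_C}.

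Next I would substitute these into the final stage \eqref{eq:y_n_plus_1}. The production weight is indexed by the column $j$, so the bracketed convex combination of $p_{ij}$ becomes $(\bA_P)_{ij}(\bC_\gamma\by^n)_j$ and the whole production sum collapses to $(\bA_P\diag(\btau(\by^n))\by^{n+1})_i$ once $\tau_j=(\bC_\gamma\by^n)_j/\sigma_j$ from \eqref{def_h} is recognised; the destruction weight is indexed by the row $i$, so summing over $j$ first gives $(\bA_D)_{ii}(\bC_\gamma\by^n)_i$ and the destruction sum collapses to $(\bA_D\diag(\btau(\by^n))\by^{n+1})_i$ using that $\bA_D$ is diagonal. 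Hence \eqref{eq:y_n_plus_1} reads $\by^{n+1}=\by^n+\Delta t\,\bA\diag(\btau(\by^n))\by^{n+1}$, i.e.
\[
\bigl(\bI-\Delta t\,\bA\diag(\btau(\by^n))\bigr)\by^{n+1}=\by^n.
\]

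Finally I would invert $\bI-\Delta t\bM$ with $\bM=\bA\diag(\btau(\by^n))$ and show it equals the factor in \eqref{def_g(y)}. The one piece of genuine content, rather than bookkeeping, is that $\det\bA=0$, so $\det\bM=0$ and Cayley--Hamilton gives $\bM^2=\tr(\bM)\bM=-(a\tau_1(\by^n)+b\tau_2(\by^n))\bM$. The ansatz $(\bI-\Delta t\bM)^{-1}=\bI+c\bM$ then forces, after multiplying out, $c(1+\Delta t(a\tau_1+b\tau_2))=\Delta t$, which is exactly the prefactor in \eqref{def_g(y)}; positivity of $\btau$ on $\R^2_{>0}$ guarantees $1+\Delta t(a\tau_1+b\tau_2)\neq 0$, so the inverse exists and $\bg$ maps $\R^2_{>0}$ into itself. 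I expect the main obstacle to be purely organisational: keeping the column-indexed production weights and the row-indexed destruction weights straight so that they assemble into $\bA_P\diag(\btau)$ and $\bA_D\diag(\btau)$ rather than their transposes, and confirming that $\by^{(2)}=\bB_\gamma\by^n$ genuinely holds for both admissible values of $\gamma$.
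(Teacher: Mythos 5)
Your proposal is correct and follows essentially the same route as the paper: reduce the stage equation to $\by^{(2)}=\bB_\gamma\by^n$, rewrite the final stage as $(\bI-\Delta t\,\bA\diag(\btau(\by^n)))\by^{n+1}=\by^n$, and invert explicitly using that $\bA\diag(\btau)$ is singular. The only cosmetic difference is in the last step, where the paper invokes a rank-one inverse formula from the literature while you derive the same expression via Cayley--Hamilton; both hinge on the positivity of $\btau$ (i.e.\ $\bC_\gamma\by>\bzero$ and $\bssigma(\by)>\bzero$) to ensure the denominator $1+\Delta t(a\tau_1+b\tau_2)$ is nonzero.
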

\begin{proof}
We start with \eqref{eq:gamma} to express $\by^{(2)}$ in terms of $\by^n$. 
Using \eqref{pij_dij_Test_eq} and \eqref{eq:y_1=y_n}, the stage value \eqref{eq:gamma}, when  applied to \eqref{PDS_test}, is given by
\begin{equation}\label{eq:y^2_matrix_vector}
\begin{split}
y_1^{(2)}&=y_1^n + \alpha\Delta t\Bigl((1-\gamma)b y_2^n + \gamma b y_2^{(2)}-ay_1^{(2)}\Bigr),\\
y_2^{(2)}&=y_2^n + \alpha\Delta t\Bigl((1-\gamma)a y_1^n + \gamma ay_1^{(2)}-by_2^{(2)}\Bigr).
\end{split}
\end{equation}
Incorporating the splitting \eqref{eq:A_splitting}, the linear system \eqref{eq:y^2_matrix_vector} becomes
\begin{equation*}
\by^{(2)} = \by^n + \alpha\Delta t\Bigl((1-\gamma)\bA_P\by^n+\gamma \bA_P\by^{(2)}-\bA_D\by^{(2)}\Bigr)
\end{equation*}
 and solving for $\by^{(2)}$ shows

\begin{equation*}
\by^{(2)}=\Bigl(\bI-\alpha\Delta t(\gamma \bA_P-\bA_D)\Bigr)^{-1}\Bigl(\bI+\alpha\Delta t(1-\gamma)\bA_P\Bigr)\by^n.
\end{equation*}
In the case $\gamma=1$, we have
\begin{equation}\label{eq:y^2_delta_1}
\by^{(2)}=\bigl(\bI-\alpha\Delta t\bA\bigr)^{-1}\by^n,
\end{equation}
and for $\gamma = 0$, we get
\begin{equation}\label{eq:y^2_delta_0}
\by^{(2)}=\bigl(\bI+\alpha\Delta t\bA_D\bigr)^{-1}\bigl(\bI+\alpha\Delta t\bA_P\bigr)\by^n.
\end{equation}
Using \eqref{def_B_gamma},
the cases \eqref{eq:y^2_delta_1} and \eqref{eq:y^2_delta_0} can be combined to obtain
\begin{equation}\label{eq:y2_Bdelta_yn}
\by^{(2)}=\bB_\gamma\by^n.
\end{equation}
Next, we consider \eqref{eq:y_n_plus_1} to express $\by^{n+1}$ in terms of $\by^n$. 
With \eqref{pij_dij_Test_eq} and \eqref{def_sigma} the approximation step \eqref{eq:y_n_plus_1} reads
\begin{equation}\label{eq:y_n_plus_1_test}
\begin{split}
y_1^{n+1} &= y_1^n + \Delta t\vastl(\Biggl(\biggl(1-\frac1{2\alpha}\biggr) b y_2^n+\biggl(\frac1{2\alpha}\biggr)by_2^{(2)}\Biggr)\frac{y_2^{n+1}}{\sigma_2(\by^n)} - \Biggl(\biggl(1-\frac1{2\alpha}\biggr) a y_1^n+\biggl(\frac1{2\alpha}\biggr)ay_1^{(2)}\Biggr)\frac{y_1^{n+1}}{\sigma_1(\by^n)}\vastr),\\
y_2^{n+1} &= y_2^n + \Delta t\vastl(\Biggl(\biggl(1-\frac1{2\alpha}\biggr) a y_1^n+\biggl(\frac1{2\alpha}\biggr)ay_1^{(2)}\Biggr)\frac{y_1^{n+1}}{\sigma_1(\by^n)} - \Biggl(\biggl(1-\frac1{2\alpha}\biggr) b y_2^n+\biggl(\frac1{2\alpha}\biggr)by_2^{(2)}\Biggr)\frac{y_2^{n+1}}{\sigma_2(\by^n)}\vastr).
\end{split}
\end{equation}
Here we note that $\by^n>\bzero$ implies $\by^{(2)}>\bzero$, see \cite{KM18}, and in addition
\begin{equation}\label{sigma>0}
\sigma_i(\by^n)=\bigl(\bB_\gamma\by^n\bigr)_i^{\frac 1\alpha}\bigl(y_i^n\bigr)^{1-\frac1\alpha}=\bigl(y_i^{(2)}\bigr)^{\frac 1\alpha}\bigl(y_i^n\bigr)^{1-\frac1\alpha}>0.
\end{equation}
The system \eqref{eq:y_n_plus_1_test} can be written as
\begin{multline}\label{eq:y_n_plus_1_A}
\by^{n+1} = \by^n + \Delta t\Biggl( \biggl(1-\frac{1}{2\alpha}\biggr)\bA_P\diag\bigl(\by^{n+1}\bigr)\diag(\bssigma(\by^n))^{-1}\by^n+\biggl(\frac{1}{2\alpha}\biggr)\bA_P\diag\bigl(\by^{n+1}\bigr)\diag(\bssigma(\by^n))^{-1}\by^{(2)}\\
-\biggl(1-\frac{1}{2\alpha}\biggr)\bA_D\diag\bigl(\by^{n+1}\bigr)\diag(\bssigma(\by^n))^{-1}\by^n-\biggl(\frac{1}{2\alpha}\biggr)\bA_D\diag\bigl(\by^{n+1}\bigr)\diag(\bssigma(\by^n))^{-1}\by^{(2)}\Biggr),
\end{multline}
where $\diag(\by)\in\R^{2\times 2}$ denotes the diagonal matrix with $(\diag(\by))_{ii}=y_i$ for $i=1,2$.
As $\diag(\bv)\bw=\diag(\bw)\bv$ and  $\diag(\bv)\diag(\bw)=\diag(\bw)\diag(\bv)$ for all $\bv,\bw\in\R^2$ as well as $\bA=\bA_P-\bA_D$, equation \eqref{eq:y_n_plus_1_A} can be rewritten as 
\begin{equation*}
\by^{n+1} = \by^n + \Delta t\Biggl( \biggl(1-\frac{1}{2\alpha}\biggr)\bA\diag\bigl(\by^{n}\bigr)\diag(\bssigma(\by^n))^{-1}+\biggl(\frac{1}{2\alpha}\biggr)\bA\diag(\by^{(2)})\diag(\bssigma(\by^n))^{-1}\Biggr)\by^{n+1}.
\end{equation*}
Utilizing \eqref{eq:y2_Bdelta_yn},
this can be further simplified to
\begin{equation*}
\by^{n+1} = \by^n + \Delta t\bA\Biggl( \biggl(1-\frac{1}{2\alpha}\biggr)\diag\bigl(\by^{n}\bigr)+\biggl(\frac{1}{2\alpha}\biggr)\diag\bigl(\bB_\gamma\by^{n}\bigr)\Biggr)\diag(\bssigma(\by^n))^{-1}\by^{n+1}.
\end{equation*}
Using $\alpha\diag(\bv)+\beta\diag(\bw)=\diag(\alpha\bv+\beta\bw)$ for all $\alpha,\beta\in\R$ and $\bv,\bw\in\R^2$ together with \eqref{def_C}
we see
\begin{align*}
\by^{n+1} &= \by^n + \Delta t\bA\diag\biggl( \biggl(1-\frac{1}{2\alpha}\biggr)\by^{n}+\biggl(\frac{1}{2\alpha}\biggr)\bB_\gamma\by^n\biggr)\diag(\bssigma(\by^n))^{-1}\by^{n+1}\\&=\by^n + \Delta t\bA\diag\bigl( \bC_\gamma\by^n\bigr)\diag(\bssigma(\by^n))^{-1}\by^{n+1}
\end{align*}
or equivalently
\begin{equation*}
\by^{n+1} = \bM(\by^n)^{-1}\by^n,
\end{equation*}
where
\begin{equation}\label{eq:M_general}
\bM(\by^n)=\bI-\Delta t\bA\diag\bigl( \bC_\gamma\by^n\bigr)\diag(\bssigma(\by^n))^{-1}.
\end{equation}
Hence, we have
\begin{equation*}
\by^{n+1}=\bg(\by^n)
\end{equation*}
with
\[\bg(\by)=\bM(\by)^{-1}\by=\bigl(\bI-\Delta t\bA\diag\bigl( \bC_\gamma\by\bigr)\diag(\bssigma(\by))^{-1}\bigr)^{-1}\by.\]
Next, we want to find an explicit representation of $\bM(\by^n)^{-1}$. 
The diagonal matrix $\diag\bigl(\bC_\gamma\by^n\bigr)$ is nonsingular for all $\by^n>\bzero$, since
\begin{align}
\bC_\gamma\by^n=\biggl(1-\frac{1}{2\alpha}\biggr)\by^n+\biggl(\frac{1}{2\alpha}\biggr)\bB_\gamma\by^n=\biggl(1-\frac{1}{2\alpha}\biggr)\by^n+\biggl(\frac{1}{2\alpha}\biggr)\by^{(2)}>\bzero\label{Cy>0}
\end{align}
holds due to $\alpha\geq\frac12$.
This implies that the matrix $\bA\diag\bigl(\bC_\gamma\by^n\bigr)\diag(\bssigma(\by^n))^{-1}$ has rank one, just like $\bA$.
Hence, according to \cite{MR617892}, the inverse of $\bM(\by^n)$ is given by
\begin{equation}\label{eq:M_inverse}
\bM(\by^n)^{-1}=\bI + \biggl(1-\tr\Bigl(\Delta t\bA\diag\bigl(\bC_\gamma\by^n\bigr)\diag(\bssigma(\by^n))^{-1}\Bigr)\biggr)^{-1}\Delta t\bA\diag\bigl(\bC_\gamma\by^n\bigr)\diag(\bssigma(\by^n))^{-1}.
\end{equation}
Using \eqref{def_h}, we obtain
\begin{equation*}
\diag\bigl(\bC_\gamma\by^n\bigr)\diag(\bssigma(\by^n))^{-1}=
\diag(\btau(\by^n)).
\end{equation*}
and \eqref{eq:M_inverse} becomes
\begin{equation*}
\bM(\by^n)^{-1}=\bI + \frac{\Delta t}{1 + \Delta t(a\tau_1(\by^n)+b\tau_2(\by^n))}\bA\diag(\btau(\by^n)).\qedhere
\end{equation*}
\end{proof}

\begin{rem}
The representation of $\bM(\by^n)$ in \eqref{eq:M_general} is also valid for arbitrary linear positive and conservative PDS $\by'=\bA\by$ with $\bA\in\R^{N\times N}$, where the splitting is given by $\bA=\bA_P-\bA_D$ with $\bA_D=\diag(\bA)$ and $\bA_P=\bA-\bA_D$. The representation of the inverse $\bM(\by^n)^{-1}$ in \eqref{eq:M_inverse} is based on the assumption that $\bA$ has rank one, which must not be satisfied for a general linear positive and conservative PDS. 
\end{rem}

Next, we verify that steady state solutions of \eqref{PDS_test} are indeed fixed points of the MPRK22 schemes. As a consequence every $\by^*=r(b,a)^T$ with $r>0$ is a fixed point of the MPRK22 schemes. This satisfies the requirement of Theorem~\ref{Thm_MPRK_stabil} to have fixed points on a line.
\begin{lem}\label{Lem f(y*)=y*}
Any steady state solution $\by^*>\bzero$ of \eqref{PDS_test} is a fixed point of the MPRK22 schemes, i.\,e.\ a fixed point of the map $\bg$ given in \eqref{def_g(y)}. Moreover, we have
\begin{align*} \bB_\gamma\by^*&=\by^*, & \bC_\gamma\by^*&=\by^*, & \bm{\sigma}(\by^*)&=\by^*, & \tau_1(\by^*)&=\tau_2(\by^*)=1.
\end{align*}
\end{lem}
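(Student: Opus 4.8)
The plan is to exploit the single structural fact that a steady state $\by^* = r(b,a)^T$ lies in the nullspace of $\bA$, so that $\bA\by^* = \bzero$, or equivalently $\bA_P\by^* = \bA_D\by^*$ in view of the splitting \eqref{eq:A_splitting}. Indeed, a one-line computation gives $\bA_P\by^* = \bA_D\by^* = rab(1,1)^T$. Every identity in the lemma then cascades from this observation combined with the definitions \eqref{def_B_gamma}--\eqref{def_h}.

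First I would establish $\bB_\gamma\by^* = \by^*$ by treating the two summands of \eqref{def_B_gamma} separately. For the $\gamma$-weighted term, $(\bI - \alpha\Delta t\bA)\by^* = \by^*$ because $\bA\by^* = \bzero$, so applying the inverse yields $(\bI - \alpha\Delta t\bA)^{-1}\by^* = \by^*$. For the $(1-\gamma)$-weighted term I would use $\bA_P\by^* = \bA_D\by^*$ to write $(\bI + \alpha\Delta t\bA_P)\by^* = (\bI + \alpha\Delta t\bA_D)\by^*$, so that after applying $(\bI + \alpha\Delta t\bA_D)^{-1}$ this term also fixes $\by^*$. Weighting the two contributions by $\gamma$ and $1-\gamma$ gives $\bB_\gamma\by^* = \gamma\by^* + (1-\gamma)\by^* = \by^*$, valid for every admissible $\gamma$.

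The remaining identities follow at once. Substituting $\bB_\gamma\by^* = \by^*$ into the definition \eqref{def_C} of $\bC_\gamma$ yields $\bC_\gamma\by^* = (1 - \tfrac{1}{2\alpha})\by^* + \tfrac{1}{2\alpha}\by^* = \by^*$. For $\bssigma$, the formula \eqref{def_sigma} gives $\sigma_i(\by^*) = (\bB_\gamma\by^*)_i^{1/\alpha}(y_i^*)^{1-1/\alpha} = (y_i^*)^{1/\alpha}(y_i^*)^{1-1/\alpha} = y_i^*$, hence $\bssigma(\by^*) = \by^*$. Then \eqref{def_h} yields $\tau_i(\by^*) = (\bC_\gamma\by^*)_i/\sigma_i(\by^*) = y_i^*/y_i^* = 1$ for $i = 1,2$.

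Finally, to verify that $\by^*$ is a fixed point of $\bg$, I would substitute $\btau(\by^*) = (1,1)^T$ into the explicit representation \eqref{def_g(y)}. Since $\diag(\btau(\by^*)) = \bI$, this reduces $\bg(\by^*)$ to $\bigl(\bI + \tfrac{\Delta t}{1 + \Delta t(a+b)}\bA\bigr)\by^*$, and the correction term vanishes because $\bA\by^* = \bzero$, leaving $\bg(\by^*) = \by^*$. I do not anticipate any genuine obstacle; the only point requiring a moment of care is the $\gamma = 0$ branch of $\bB_\gamma$, where one must invoke $\bA_P\by^* = \bA_D\by^*$ rather than $\bA\by^* = \bzero$ directly, since $\bA_P$ and $\bA_D$ do not individually annihilate $\by^*$.
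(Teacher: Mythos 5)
Your proposal is correct and follows essentially the same route as the paper's own proof: both derive $\bB_\gamma\by^*=\by^*$ by handling the $\gamma$-term via $\bA\by^*=\bzero$ and the $(1-\gamma)$-term via the equivalent identity $\bA_P\by^*=\bA_D\by^*$, and then let the remaining identities cascade through the definitions of $\bC_\gamma$, $\bssigma$, $\btau$ and $\bg$. No gaps; the explicit computation $\bA_P\by^*=\bA_D\by^*=rab(1,1)^T$ is a harmless addition.
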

\begin{proof} 
Let $\by^*$ be a steady state solution of \eqref{PDS_test}, i.\,e. $\bA\by^*=\bzero$. 
At first we show, that $\by^*$ is a fixed point of $\bB_\gamma$ defined in \eqref{def_B_gamma}.
Considering the first term in $\bB_\gamma$ we find $(\bI -\alpha\Delta t\bA)\by^*=\by^*$ and equivalently $\by^*=(\bI-\alpha\Delta t\bA)^{-1}\by^*$. Furthermore, $\bzero=\bA\by^*=\bA_P\by^*-\bA_D\by^*$ is equivalent to $\bA_P\by^*=\bA_D\by^*$. Hence, $(\bI+\alpha\Delta t\bA_D)\by^*=(\bI+\alpha\Delta t\bA_P)\by^*$ or $\by^*=(\bI+\alpha\Delta t\bA_D)^{-1}(\bI+\alpha\Delta t\bA_P)\by^*$.  Altogether, we find
\begin{equation}
\by^{(2)}=\bB_\gamma\by^*=\by^*.\label{By*=y*}
\end{equation}
As a direct consequence, it follows from \eqref{def_C} that
\begin{equation}\label{eq:C_ystern_1}
\bC_\gamma\by^*=\biggl(1-\frac{1}{2\alpha}\biggr)\by^*+\biggl(\frac{1}{2\alpha}\biggr)\bB_\gamma\by^*=\by^*\end{equation}
and in addition, we conclude from \eqref{def_sigma} that
\begin{equation}
\bm{\sigma}(\by^*)=\by^*,\label{sigma(y*)=y*}
\end{equation}
where we used $\by^*>\bzero$. Finally, this implies 
\begin{equation}
\tau_i(\by^*)=\frac{(\bC_\gamma \by^*)_i}{\sigma_i(\by^*)}=1,\label{h(y*)=1}
\end{equation}
for $i=1,2$, and thus, according to \eqref{def_g(y)},
\begin{equation*}
\bg(\by^*)=\biggl(\bI + \frac{\Delta t}{1 + \Delta t(a\tau_1(\by^*)+b\tau_2(\by^*))}\bA\diag\bigl(\btau(\by^*)\bigr)\biggr)\by^*=\by^*+\frac{\Delta t}{1+\Delta t(a+b)}\bA\by^*=\by^*.\qedhere
\end{equation*}
\end{proof}

Based on the knowledge that steady state solutions $\by^*>\bzero$ of \eqref{PDS_test} are fixed points of the MPRK schemes, it is now necessary to compute the eigenvalues of the Jacobian $\bD\bg(\by^*)$. Therefore we have to show that all partial derivatives of $\bg$ exist. The next lemma even states that $\bg\in \mathcal{C}^\infty(\R^2_{>0})$, so that all requirements of Theorem~\ref{Thm_MPRK_stabil} are fulfilled.
\begin{lem}\label{Lem:C2}
The elements $g_i$ of $\bg$ from \eqref{def_g(y)} satisfy $g_i\in \mathcal{C}^\infty (\R^2_{>0})$ for $i\in\{1,2\}$.
\end{lem}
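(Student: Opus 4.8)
The plan is to show that $\bg$ is assembled entirely from compositions, products, and quotients of $\mathcal{C}^\infty$-functions whose denominators never vanish on $\R^2_{>0}$, so that smoothness follows from the standard closure properties of $\mathcal{C}^\infty$. First I would treat the auxiliary functions $\sigma_i$ from \eqref{def_sigma}. Since $\bB_\gamma$ is a constant matrix, each map $\by\mapsto(\bB_\gamma\by)_i$ is linear and hence smooth, and by \eqref{sigma>0} it is strictly positive on $\R^2_{>0}$. The real power $t\mapsto t^{1/\alpha}$ is $\mathcal{C}^\infty$ on $(0,\infty)$ for every $\alpha\ge\tfrac12$, and likewise $y_i\mapsto y_i^{1-1/\alpha}$ is smooth on $y_i>0$; composing these with the positive smooth arguments $(\bB_\gamma\by)_i$ and $y_i$ shows $\sigma_i\in\mathcal{C}^\infty(\R^2_{>0})$ with $\sigma_i>0$.

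Next I would address $\btau$ from \eqref{def_h}. Each component $(\bC_\gamma\by)_i$ is again linear, hence smooth, and strictly positive on $\R^2_{>0}$ by \eqref{Cy>0}. Since $\sigma_i$ is smooth and nowhere zero, the quotient $\tau_i=(\bC_\gamma\by)_i/\sigma_i$ lies in $\mathcal{C}^\infty(\R^2_{>0})$; moreover $\tau_i>0$ because it is a ratio of two positive quantities.

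With this in hand the remaining step is routine. Because $a,b\ge 0$ and $\tau_i>0$, the scalar denominator appearing in \eqref{def_g(y)} satisfies $1+\Delta t\bigl(a\tau_1(\by)+b\tau_2(\by)\bigr)\ge 1>0$ throughout $\R^2_{>0}$, so the prefactor is a smooth function, being a constant times the reciprocal of a smooth, nowhere-vanishing function. Finally, $\bg$ is obtained from these smooth scalar functions together with the constant matrix $\bA$ by matrix multiplication, addition of the identity $\bI$, and application to the linear argument $\by$. Each component $g_i$ is therefore a finite algebraic combination of $\mathcal{C}^\infty$-functions and is itself an element of $\mathcal{C}^\infty(\R^2_{>0})$.

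The only genuinely delicate point is the smoothness of the fractional powers entering $\sigma_i$: the maps $t\mapsto t^{1/\alpha}$ and $t\mapsto t^{1-1/\alpha}$ are $\mathcal{C}^\infty$ only away from zero argument. Hence the crucial ingredient is that both $(\bB_\gamma\by)_i$ and $y_i$ stay strictly positive on the \emph{whole} domain $\R^2_{>0}$, which is exactly the positivity already recorded in \eqref{sigma>0}. Once this is secured, no denominator can vanish and no fractional power is evaluated at zero, and the conclusion follows immediately.
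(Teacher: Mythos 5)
Your proposal is correct and follows essentially the same route as the paper: decompose $\bg$ into the building blocks $(\bB_\gamma\by)_i$, $(\bC_\gamma\by)_i$, $\sigma_i$, $\tau_i$, verify via \eqref{sigma>0} and \eqref{Cy>0} that the fractional powers are only evaluated at positive arguments and that no denominator (including $1+\Delta t(a\tau_1+b\tau_2)$) vanishes on $\R^2_{>0}$, and conclude by closure of $\mathcal{C}^\infty$ under composition, products, and quotients. Your added remarks on the smoothness of $t\mapsto t^{1/\alpha}$ away from zero only make explicit what the paper leaves implicit.
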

\begin{proof}
Recall from \eqref{def_h} and \eqref{def_g(y)} that 
\begin{equation*}
\bg(\by)=\biggl(\bI + \frac{\Delta t}{1 + \Delta t(a\tau_1(\by)+b\tau_2(\by))}\bA\diag\bigl(\btau(\by)\bigr)\biggr)\by,
\end{equation*}
with $\tau_i(\by)=(\bC_\gamma\by)_i\sigma_i(\by)^{-1}$ for $i=1,2$.
The functions $\sigma_i(\by)=(\bB_\gamma \by)_i^{\frac{1}{\alpha}}(y_i)^{1-\frac{1}{\alpha}}$ and $(\bC_\gamma\by)_i$ are in $\mathcal{C}^\infty (\R^2_{>0})$.
 Furthermore, we know $\sigma_i(\by)>0$ for $\by>\bzero$ according to \eqref{sigma>0}, which yields $\tau_i\in \mathcal{C}^\infty(\R^2_{>0})$ for $i=1,2$ due to the quotient rule. Also $\btau(\by)>\bzero$ for $\by>\bzero$ holds because of \eqref{Cy>0}.
Thus, $1 + \Delta t(a\tau_1(\by^n)+b\tau_2(\by^n))$ is always positive. Consequently even $g_i\in \mathcal{C}^\infty(\R^2_{>0})$ for $i=1,2$.
\end{proof}
Next, we give an explicit representation of the Jacobian $\bD\bg(\by^*)$. As $\bg$ is defined on $\R^2_{>0}$, we have to ensure $\by^*>\bzero$, which requires $a,b>0$. The cases $a=0$ or $b=0$ are special, in that no steady states of \eqref{PDS_test} are contained in $\R^2_{>0}$. These cases will be discussed separately in Section~\ref{b=0_1}.
\begin{lem}\label{Lem Df y=y}
Let $\bg$ be defined by \eqref{def_g(y)} and $\by^*>\bzero$ a steady state of \eqref{PDS_test}, i.\,e. $\bA\by^*=\bzero$. Then, the Jacobian $\bD\bg(\by^*)$ is given by
\begin{equation}
\bD\bg(\by^*)=\bI+\frac{\Delta t}{1+\Delta t(a+b)}\bA\left(\bI+\frac{1}{2\alpha}(\bI-\bB_\gamma)\right).\label{Dg(y*)}
\end{equation}
\end{lem}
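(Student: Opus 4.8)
The plan is to differentiate $\bg$ from \eqref{def_g(y)} with the product rule and then exploit the equalities of Lemma~\ref{Lem f(y*)=y*}, most importantly $\bA\by^*=\bzero$ and $\btau(\by^*)=(1,1)^T$. First I would write $\bg$ in the compact form $\bg(\by)=\by+\phi(\by)\,\bA\,\mathbf q(\by)$, where $\phi(\by)=\frac{\Delta t}{1+\Delta t(a\tau_1(\by)+b\tau_2(\by))}$ is a positive scalar (well defined and smooth by Lemma~\ref{Lem:C2}) and $\mathbf q(\by)=\diag(\btau(\by))\by$, using $\diag(\bv)\bw=\diag(\bw)\bv$. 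The product rule applied to the vector field $\phi(\by)\bigl(\bA\mathbf q(\by)\bigr)$ yields
\[\bD\bg(\by)=\bI+\bigl(\bA\mathbf q(\by)\bigr)\bigl(\nabla\phi(\by)\bigr)^T+\phi(\by)\,\bA\,\bD\mathbf q(\by).\]
The decisive simplification comes at $\by^*$: since $\tau_i(\by^*)=1$ we have $\mathbf q(\by^*)=\diag(\btau(\by^*))\by^*=\by^*$, and hence $\bA\mathbf q(\by^*)=\bA\by^*=\bzero$. Thus the middle term---the only one containing the derivative of the awkward scalar $\phi$---drops out entirely, leaving $\bD\bg(\by^*)=\bI+\phi(\by^*)\,\bA\,\bD\mathbf q(\by^*)$ with $\phi(\by^*)=\frac{\Delta t}{1+\Delta t(a+b)}$.

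Next I would reduce $\bD\mathbf q(\by^*)$ to $\bD\btau(\by^*)$. From $q_i(\by)=\tau_i(\by)y_i$ we obtain $\partial_{y_j}q_i=(\partial_{y_j}\tau_i)y_i+\tau_i\delta_{ij}$, so using $\tau_i(\by^*)=1$ once more, $\bD\mathbf q(\by^*)=\bI+\diag(\by^*)\bD\btau(\by^*)$ and consequently
\[\bA\,\bD\mathbf q(\by^*)=\bA+\bA\diag(\by^*)\bD\btau(\by^*).\]
Comparing this with the target formula \eqref{Dg(y*)}, the whole statement reduces to the identity $\bA\diag(\by^*)\bD\btau(\by^*)=\frac{1}{2\alpha}\bA(\bI-\bB_\gamma)$, which the remaining computation must establish.

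The core of the argument---and the step I expect to be the main obstacle---is the computation of $\bD\btau(\by^*)$, because it requires differentiating the nonlinear weights $\sigma_i(\by)=(\bB_\gamma\by)_i^{1/\alpha}(y_i)^{1-1/\alpha}$ from \eqref{def_sigma}. Writing $\tau_i=(\bC_\gamma\by)_i/\sigma_i$ as in \eqref{def_h} and applying the quotient rule, I would substitute the Lemma's values $(\bC_\gamma\by^*)_i=(\bB_\gamma\by^*)_i=\sigma_i(\by^*)=y_i^*$ throughout. A direct exponent computation gives $\partial_{y_j}\sigma_i(\by^*)=\frac1\alpha(\bB_\gamma)_{ij}+\bigl(1-\tfrac1\alpha\bigr)\delta_{ij}$, whence
\[\partial_{y_j}\tau_i(\by^*)=\frac{1}{y_i^*}\Bigl((\bC_\gamma)_{ij}-\tfrac1\alpha(\bB_\gamma)_{ij}-\bigl(1-\tfrac1\alpha\bigr)\delta_{ij}\Bigr).\]
Inserting $\bC_\gamma=\bigl(1-\tfrac{1}{2\alpha}\bigr)\bI+\tfrac{1}{2\alpha}\bB_\gamma$ from \eqref{def_C}, the coefficients of $\delta_{ij}$ collapse to $\tfrac{1}{2\alpha}$ and those of $(\bB_\gamma)_{ij}$ to $-\tfrac{1}{2\alpha}$, so that $\bD\btau(\by^*)=\frac{1}{2\alpha}\diag(\by^*)^{-1}(\bI-\bB_\gamma)$. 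Multiplying on the left by $\bA\diag(\by^*)$ cancels the diagonal factor and produces exactly $\frac{1}{2\alpha}\bA(\bI-\bB_\gamma)$.

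Combining these pieces gives $\bA\,\bD\mathbf q(\by^*)=\bA\bigl(\bI+\frac{1}{2\alpha}(\bI-\bB_\gamma)\bigr)$, and inserting this together with $\phi(\by^*)=\frac{\Delta t}{1+\Delta t(a+b)}$ into $\bD\bg(\by^*)=\bI+\phi(\by^*)\,\bA\,\bD\mathbf q(\by^*)$ yields the claimed expression \eqref{Dg(y*)}. The only delicate points are keeping the product-rule bookkeeping correct so that the $\nabla\phi$ term is correctly recognized as vanishing via $\bA\by^*=\bzero$, and carrying out the exponent arithmetic in $\partial_{y_j}\sigma_i$ without error; the smoothness required to apply the product and quotient rules is guaranteed by Lemma~\ref{Lem:C2}.
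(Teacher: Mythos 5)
Your proposal is correct and follows essentially the same route as the paper's proof: the paper absorbs the scalar $1/d(\by)$ into an auxiliary vector $\widetilde\btau$ and differentiates that, whereas you keep it as a prefactor $\phi(\by)$ and use the product rule, but in both cases the derivative-of-the-denominator term is annihilated by $\bA\by^*=\bzero$ and the core identity $\bD\btau(\by^*)=\frac{1}{2\alpha}\diag(\by^*)^{-1}(\bI-\bB_\gamma)$ is established by the same quotient-rule and exponent computation. All intermediate formulas you give match the paper's.
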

\begin{proof}
Setting
\begin{equation*}
\widetilde\btau(\by)=\frac{1}{d(\by)}\Mat{\tau_1(\by)y_1\\\tau_2(\by)y_2},\quad d(\by)=1+\Delta t(a \tau_1(\by)+b \tau_2(\by)),
\end{equation*}
the map $\bg$ from \eqref{def_g(y)}  can be written as
\begin{equation*}
\bg(\by)=\by+\Delta t\bA\widetilde\btau(\by)
\end{equation*}
and consequently we have
\begin{equation}\label{eq:Dg_aux}
\bD\bg(\by)=\bI+\Delta t\bA\bD\widetilde\btau(\by).
\end{equation}
Making use of the notation $\partial_i= \frac{\partial}{\partial y_i}$, we obtain
\begin{align*}
\partial_1\widetilde \tau_1(\by)&=\frac{(\partial_1\tau_1(\by) y_1+\tau_1(\by))d(\by)-\tau_1(\by)y_1\partial_1 d(\by)}{d(\by)^2}, & 
\partial_2\widetilde \tau_1(\by)&=\frac{\partial_2\tau_1(\by) y_1d(\by)-\tau_1(\by)y_1\partial_2 d(\by)}{d(\by)^2},\\
\partial_1\widetilde \tau_2(\by)&=\frac{\partial_1\tau_2(\by) y_2d(\by)-\tau_2(\by)y_2\partial_1 d(\by)}{d(\by)^2}, & 
\partial_2\widetilde \tau_2(\by)&=\frac{(\partial_2\tau_2(\by) y_2+\tau_2(\by))d(\by)-\tau_2(\by)y_2\partial_2 d(\by)}{d(\by)^2}
\end{align*}
and thus
\begin{equation*}
\bD\widetilde\btau(\by) = \frac{1}{d(\by)}\bigl(\diag(\by)\bD\btau(\by) + \diag(\btau(\by))\bigr)-\frac{1}{d(\by)^2}\diag(\btau(\by))\by(\operatorname{grad} d(\by))^T.
\end{equation*}
Inserting this into \eqref{eq:Dg_aux}, the Jacobian of $\bg$ in $\by^*$ is given by
\begin{equation*}
\bD\bg(\by^*) = \bI + \frac{\Delta t}{d(\by^*)}\bA\bigl(\diag(\by^*)\bD\btau(\by^*) + \diag(\btau(\by^*))\bigr)-\frac{\Delta t}{d(\by)^2}\bA\diag(\btau(\by^*))\by^*(\operatorname{grad} d(\by^*))^T.
\end{equation*}
From \eqref{h(y*)=1} we know that $\diag(\btau(\by^*))=\bI$ and together with $\bA\by^*=\bzero$ we find
\begin{equation*}
\bD\bg(\by^*) = \bI + \frac{\Delta t}{d(\by^*)}\bA\bigl(\diag(\by^*)\bD\btau(\by^*) + \bI\bigr).
\end{equation*}
To finish the proof, we have to show $\diag(\by^*)\bD\btau(\by^*)=\frac{1}{2\alpha}(\bI-\bB_\gamma)$.
Therefore, we need to express the partial derivatives of $\tau_i(\by)=(\bC_\gamma\by)_i\sigma_i(\by)^{-1}$ for $i,j=1,2$ in terms of $\bB_\gamma=(b^\gamma_{ij})_{i,j=1,2}$.
The partial derivatives of $\bssigma$ are
\begin{align*}
\partial_1\sigma_1(\by)&=\frac{1}{\alpha}(\bB_\gamma\by)_1^{\frac{1}{\alpha}-1}b^\gamma_{11}y_1^{1-\frac{1}{\alpha}}+(\bB_\gamma\by)_1^{\frac{1}{\alpha}}\biggl(1-\frac{1}{\alpha}\biggr)y_1^{-\frac{1}{\alpha}}, \\ \partial_2\sigma_1(\by)&=\frac{1}{\alpha}(\bB_\gamma\by)_1^{\frac{1}{\alpha}-1}b^\gamma_{12}y_1^{1-\frac{1}{\alpha}},\\
\partial_1\sigma_2(\by)&=\frac{1}{\alpha}(\bB_\gamma\by)_2^{\frac{1}{\alpha}-1}b^\gamma_{21}y_2^{1-\frac{1}{\alpha}}, \\ \partial_2\sigma_2(\by)&=\frac{1}{\alpha}(\bB_\gamma\by)_2^{\frac{1}{\alpha}-1}b^\gamma_{22}y_2^{1-\frac{1}{\alpha}}+(\bB_\gamma\by)_2^{\frac{1}{\alpha}}\biggl(1-\frac{1}{\alpha}\biggr)y_2^{-\frac{1}{\alpha}}
\end{align*}
and as $\bB_\gamma\by^*=\by^*$ we have
\begin{align*}
\partial_1\sigma_1(\by^*)&=\frac{1}{\alpha}b^\gamma_{11}+1-\frac{1}{\alpha}, & \partial_2\sigma_1(\by^*)&=\frac{1}{\alpha}b^\gamma_{12},\\
\partial_1\sigma_2(\by^*)&=\frac{1}{\alpha}b^\gamma_{21}, & \partial_2\sigma_2(\by^*)&=\frac{1}{\alpha}b^\gamma_{22}+1-\frac{1}{\alpha}.
\end{align*}
Furthermore, due to \eqref{def_C} together with $\bC_\gamma=(c^\gamma_{ij})_{i,j=1,2}$ we see
\begin{equation*}
\begin{aligned}
\partial_1 (\bC_\gamma\by)_1 &=c_{11}^\gamma = \biggl(1-\frac1{2\alpha}\biggr) + \frac{1}{2\alpha}b_{11}^\gamma,\quad & 
\partial_2 (\bC_\gamma\by)_1 &=c_{12}^\gamma = \frac{1}{2\alpha}b_{12}^\gamma,\\
\partial_1 (\bC_\gamma\by)_2 &=c_{21}^\gamma = \frac{1}{2\alpha}b_{21}^\gamma, & 
\partial_2 (\bC_\gamma\by)_2 &=c_{22}^\gamma = \biggl(1-\frac1{2\alpha}\biggr) + \frac{1}{2\alpha}b_{22}^\gamma.
\end{aligned}
\end{equation*}
Now we are ready to compute the partial derivatives of $\btau$, whereby we repeatedly use \eqref{eq:C_ystern_1} and \eqref{sigma(y*)=y*}, i.\,e. $\bC_\gamma \by^*=\by^*$ and $\sigma(\by^*)=\by^*$, which yield
\begin{align*}
\partial_1 \tau_1(\by^*)&=\frac{\partial_1(\bC_\gamma\by)_1\sigma_1(\by)-(\bC_\gamma\by)_1\partial_1\sigma_1(\by)}{(\sigma_1(\by))^2}\bigg|_{\by=\by^*}=\frac{\Bigl(1-\frac1{2\alpha} + \frac{1}{2\alpha}b_{11}^\gamma\Bigr)y^*_1-y^*_1\Bigl(\frac{1}{\alpha}b^\gamma_{11}+1-\frac{1}{\alpha}\Bigr)}{(y^*_1)^2}=\frac{1-b_{11}^\gamma}{2\alpha y_1^*},\\
\partial_2\tau_1(\by^*)&=\frac{\partial_2(\bC_\gamma\by)_1\sigma_1(\by)-(\bC_\gamma\by)_1\partial_2\sigma_1(\by)}{(\sigma_1(\by))^2}\bigg|_{\by=\by^*}=\frac{\frac{1}{2\alpha}b^\gamma_{12}y^*_1-y^*_1\frac{1}{\alpha}b^\gamma_{12}}{(y^*_1)^2}=-\frac{b^\gamma_{12}}{2\alpha y^*_1},
\\
\partial_1\tau_2(\by^*)&=\frac{\partial_1(\bC_\gamma\by)_2\sigma_2(\by)-(\bC_\gamma\by)_2\partial_1\sigma_2(\by)}{(\sigma_2(\by))^2}\Bigg|_{\by=\by^*}=\frac{\frac{1}{2\alpha}b_{21}^\gamma y_2^* - y_2^*\frac1{\alpha}b_{21}^\gamma}{(y_2^*)^2}=-\frac{b_{21}^\gamma}{2\alpha y_2^*},\\
\partial_2\tau_2(\by^*)&=\frac{\partial_2(\bC_\gamma\by)_2\sigma_2(\by)-(\bC_\gamma\by)_2\partial_2\sigma_2(\by)}{(\sigma_2(\by))^2}\bigg|_{\by=\by^*}=\frac{\Bigl(1-\frac1{2\alpha}+\frac{1}{2\alpha}b_{22}^\gamma\Bigr)y^*_2-y^*_2\Bigl(\frac{1}{\alpha}b^\gamma_{22}+1-\frac{1}{\alpha}\Bigr)}{(y^*_2)^2}=\frac{1-b^\gamma_{22}}{2\alpha y^*_2}.
\end{align*}
Altogether we see
\begin{equation*}
\bD\btau(\by)=\frac{1}{2\alpha}\diag(\by^*)^{-1}(\bI-\bB_\gamma),
\end{equation*}
which completes the proof.
\end{proof}

We are now in a position to compute the eigenvalues of $\bD\bg(\by^*)$, which are needed to evaluate the stability of the fixed point $\by^*$.
\begin{lem}\label{Lem: Df (1,-1)} 
Every steady state $\by^*>\bzero$ of \eqref{PDS_test} is a non-hyperbolic fixed point of the MPRK22 schemes. In particular, we have
\begin{equation*}
\bD\bg(\by^*)\by^*=\by^*
\end{equation*}
and
\[\bD\bg(\by^*)\bby=R_\gamma(-\Delta t a,-\Delta tb)\bby,\]
where $\bby=(1,-1)^T$ and
\begin{subequations}\label{eq:R_delta}
\begin{equation}
R_1( z_a,z_b)= \frac{2-2\alpha(z_a+z_b)-(z_a+z_b)^2}{2(1-(z_a+z_b))(1-\alpha(z_a+z_b))}\label{def R(alpha,z)}
\end{equation}
as well as
\begin{equation}
R_0(z_a,z_b)=\frac{2-(z_a+z_b)(\frac{z_a}{1-\alpha z_a}+\frac{z_b}{1-\alpha z_b})}{2(1-(z_a+z_b))}.\label{eq:R0}
\end{equation}
\end{subequations}
\end{lem}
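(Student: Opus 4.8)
The plan is to start from the closed form of the Jacobian provided by Lemma~\ref{Lem Df y=y},
\[\bD\bg(\by^*)=\bI+\frac{\Delta t}{1+\Delta t(a+b)}\bA\Bigl(\bI+\tfrac{1}{2\alpha}(\bI-\bB_\gamma)\Bigr),\]
and to exploit two structural properties. Since $\by^*$ is a steady state, $\bA\by^*=\bzero$, and Lemma~\ref{Lem f(y*)=y*} gives $\bB_\gamma\by^*=\by^*$, so that $\bigl(\bI+\tfrac1{2\alpha}(\bI-\bB_\gamma)\bigr)\by^*=\by^*$ and the correction term annihilates $\by^*$; this yields $\bD\bg(\by^*)\by^*=\by^*$ at once, giving the eigenvalue $1$. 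For the second eigenvector I would observe that the two columns of $\bA$ are $-a\bby$ and $b\bby$, so that $\operatorname{range}(\bA)=\operatorname{span}\{\bby\}$ and $\bA\bx\in\operatorname{span}\{\bby\}$ for \emph{every} $\bx$. Hence $\bD\bg(\by^*)\bby$ is automatically a scalar multiple of $\bby$, so $\bby$ is the remaining eigenvector and only its eigenvalue $R_\gamma$ needs to be identified. Because the eigenvalue $1$ has modulus one, $\by^*$ is non-hyperbolic irrespective of $R_\gamma$.

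To compute $R_\gamma$ I would use $\bA\bby=-(a+b)\bby$ together with the abbreviation $s=z_a+z_b$ and $z_a=-\Delta t a$, $z_b=-\Delta t b$, so that $1+\Delta t(a+b)=1-s$ and $\Delta t\bigl(-(a+b)\bigr)=s$. Writing
\[\Bigl(\bI+\tfrac1{2\alpha}(\bI-\bB_\gamma)\Bigr)\bby=\Bigl(1+\tfrac1{2\alpha}\Bigr)\bby-\tfrac1{2\alpha}\bB_\gamma\bby,\]
the task reduces to evaluating $\bA\bB_\gamma\bby$. For $\gamma=1$ the vector $\bby$ is itself an eigenvector of $\bB_1=(\bI-\alpha\Delta t\bA)^{-1}$ with $\bB_1\bby=\tfrac{1}{1-\alpha s}\bby$, so the whole computation stays within $\operatorname{span}\{\bby\}$ and one reads off $R_1=1+\frac{s}{1-s}\bigl[(1+\tfrac1{2\alpha})-\tfrac{1}{2\alpha(1-\alpha s)}\bigr]$. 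Using $(1+\tfrac1{2\alpha})-\tfrac{1}{2\alpha(1-\alpha s)}=1-\tfrac{s}{2(1-\alpha s)}$ and clearing the denominator $2(1-s)(1-\alpha s)$ collapses the numerator to $2-2\alpha s-s^2$, which is exactly \eqref{def R(alpha,z)}.

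For $\gamma=0$ the vector $\bB_0\bby$ is no longer parallel to $\bby$; computing it from $\bB_0=(\bI+\alpha\Delta t\bA_D)^{-1}(\bI+\alpha\Delta t\bA_P)$ componentwise gives the entries $\tfrac{1+\alpha z_b}{1-\alpha z_a}$ and $-\tfrac{1+\alpha z_a}{1-\alpha z_b}$. Applying $\bA$ and invoking the rank-one structure forces $\bA\bB_0\bby=-c\,\bby$ with $c=a\tfrac{1+\alpha z_b}{1-\alpha z_a}+b\tfrac{1+\alpha z_a}{1-\alpha z_b}$, and assembling the pieces produces $R_0=\frac{1}{1-s}\bigl(1+\tfrac{s+\Delta t c}{2\alpha}\bigr)$. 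The decisive identity is $s+\Delta t c=-\alpha s\bigl(\tfrac{z_a}{1-\alpha z_a}+\tfrac{z_b}{1-\alpha z_b}\bigr)$, obtained from the telescoping cancellation $z_a-\tfrac{z_a(1+\alpha z_b)}{1-\alpha z_a}=-\tfrac{\alpha s z_a}{1-\alpha z_a}$ and its $z_b$-counterpart, after which \eqref{eq:R0} drops out. The hard part will be exactly this $\gamma=0$ bookkeeping: keeping the substitutions $\Delta t a=-z_a$, $\Delta t b=-z_b$ consistent and carrying out the cancellation that compresses the two separate rational terms into the factor $\tfrac{z_a}{1-\alpha z_a}+\tfrac{z_b}{1-\alpha z_b}$; by contrast the eigenvalue-$1$ claim and the $\gamma=1$ computation are essentially immediate.
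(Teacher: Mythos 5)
Your proposal is correct and follows essentially the same route as the paper: both start from the Jacobian formula of Lemma~\ref{Lem Df y=y}, obtain the eigenvalue $1$ from $\bA\by^*=\bzero$ and $\bB_\gamma\by^*=\by^*$, and then evaluate the action on $\bby$ separately for $\gamma=1$ and $\gamma=0$. The only differences are cosmetic — you justify that $\bby$ is an eigenvector via $\operatorname{range}(\bA)=\operatorname{span}\{\bby\}$ rather than by citing the proof of Theorem~\ref{Thm_MPRK_stabil}, and you compute $\bB_0\bby$ componentwise where the paper uses the splitting $\bA_P=\bA+\bA_D$ to reduce to an eigenvalue of $\bA(\bI+\alpha\Delta t\bA_D)^{-1}$ — and all of your intermediate identities check out.
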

\begin{proof}
From the proof of Theorem~\ref{Thm_MPRK_stabil} we know that $\by^*$ is an eigenvector of $\bD\bg(\by^*)$ with associated eigenvalue 1, which can be checked with the straight forward calculation 
\begin{equation*}
\bD\bg(\by^*)\by^*=\by^*+\frac{\Delta t}{1+\Delta t(a+b)}\bA\biggl(\bI+\frac{1}{2\alpha}(\bI-\bB_\gamma)\biggr)\by^*=\by^*+\frac{\Delta t}{1+\Delta t(a+b)}\bA\by^*=\by^*,
\end{equation*}
where we used \eqref{Dg(y*)} and \eqref{By*=y*}.
Hence, $\by^*$ is a non-hyperbolic fixed point of $\bg$.

Furthermore, we know from the proof of Theorem~\ref{Thm_MPRK_stabil} that $\bby=(1,-1)^T$ is another eigenvector of $\bD\bg(\by^*)$ and we need to compute the associated eigenvalue. 

First, we consider the case $\gamma=1$.
The vector $\bby$ is an eigenvector of the matrix $\bA$ from \eqref{PDS_test} with associated eigenvalue $\lambda=-(a+b)$, i.\,e. $\bA\bby=\lambda\bby$.  
If $\gamma=1$,  $\bB_\gamma$ from \eqref{def_B_gamma} becomes $\bB_1= (\bI-\alpha\Delta t\bA)^{-1}$ and 
$(\bI-\alpha\Delta t\bA)\bby = \bby - \alpha\Delta t\lambda \bby = (1-\alpha\Delta t\lambda)\bby$ implies $\bB_1\bby=(1-\alpha\Delta t\lambda)^{-1}\bby$.
Hence, using \eqref{Dg(y*)}, we see that 
\begin{equation*}
\bD\bg(\by^*)\bby=\bby+\frac{\Delta t}{1-\Delta t\lambda}\bA\left(\bby+\frac{1}{2\alpha}(\bI-\bB_1)\bby\right)
=\left(1+ \frac{\Delta t \lambda}{1-\Delta t\lambda}\left(1+\frac{1}{2\alpha}\left(1-\frac{1}{1-\alpha \Delta t\lambda}\right) \right)\right)\bby.
\end{equation*}
Setting $z = \Delta t\lambda$, we find that the eigenvalue associated with $\bby$ for $\gamma = 1$ is given by
\begin{align*}
1+ \frac{z}{1-z}\left(1+\frac{1}{2\alpha}\left(1-\frac{1}{1-\alpha z}\right) \right)
=\frac{2-2\alpha z -z^2}{2(1-z)(1-\alpha z)}.
\end{align*}
With $z_a=-\Delta t a$ and $z_b=-\Delta t b$ we get $z = -\Delta t(a+ b)=z_a+z_b$ and thus
\begin{equation*}
\frac{2-2\alpha z -z^2}{2(1-z)(1-\alpha z)}=\frac{2-2\alpha (z_a+z_b) -(z_a+z_b)^2}{2(1-(z_a+z_b))(1-\alpha (z_a+z_b))}=R_1(z_a,z_b).
\end{equation*}

Next, we consider the case $\gamma=0$ and compute $\bA\bB_0\bby$. From  \eqref{eq:A_splitting}, \eqref{def_B_gamma} and $\bA\bby=\lambda\bby=\lambda\bI\bby$ we see 
\begin{align*}
\bA\bB_0\bby&=\bA(\bI+\Delta t\alpha \bA_D)^{-1}(\bI+\Delta t \alpha \bA_P)\bby=\bA(\bI+\Delta
t \alpha \bA_D)^{-1}(\bI+\Delta t \alpha (\bA+\bA_D))\bby\\
&=\bA(\bI+\Delta
t \alpha \bA_D)^{-1}(\bI+\Delta t \alpha\bA_D+\Delta t\alpha \lambda\bI)\bby
=\lambda\bby+\alpha \Delta t \lambda \bA(\bI+\Delta
t \alpha \bA_D)^{-1}\bby.
\end{align*}
Realizing that $\bby=(1,-1)^T$ is an eigenvector of 
\[\bA(\bI+\Delta
t \alpha \bA_D)^{-1}=
\Mat{-\frac{a}{1+\Delta t\alpha a} & \frac{b}{1+\Delta t\alpha b}\\
\frac{a}{1+\Delta t\alpha a} & -\frac{b}{1+\Delta t\alpha b}}
\] with associated eigenvalue 
\[\mu=-\biggl(\frac{a}{1+\Delta t\alpha a}+\frac{b}{1+\Delta t\alpha b}\biggr),\]
we obtain
\[\bA\bB_0\bby=(\lambda +\alpha\Delta t\lambda \mu)\bby.\]
This together with \eqref{Dg(y*)} shows
\begin{align*}
\bD\bg(\by^*)\bby&=\biggl(\bI+\frac{\Delta t}{1-\Delta t\lambda}\biggl(\bA+\frac{1}{2\alpha}(\bA-\bA\bB_0)\biggr)\biggr)\bby=\biggl(1+\frac{\Delta t\lambda}{1-\Delta t\lambda}+\frac{\Delta t\lambda}{2\alpha(1-\Delta t\lambda)}-\frac{\Delta t\lambda + \alpha\Delta t\lambda\Delta t \mu}{2\alpha(1-\Delta t\lambda)}\biggr)\bby\\
&=\biggl(1+\frac{\Delta t\lambda}{1-\Delta t\lambda}-\frac{\alpha\Delta t\lambda\Delta t \mu}{2\alpha(1-\Delta t\lambda)}\biggr)\bby=\frac{2-\Delta t \lambda\Delta t\mu}{2(1-\Delta t\lambda)}\bby.
\end{align*}
Hence, using $\Delta t\lambda = -\Delta t(a+ b)=z_a+z_b$ as well as $\Delta t\mu =\frac{z_a}{1-\alpha z_a}+\frac{z_b}{1-\alpha z_b}$  the eigenvalue corresponding to $\bby$ for $\gamma = 0$ is
\begin{equation*}
\frac{2- (z_a+z_b)  (\frac{z_a}{1-\alpha z_a}+\frac{z_b}{1-\alpha z_b})}{2(1-(z_a+z_b))}=R_0(z_a,z_b).\qedhere
\end{equation*}
\end{proof}
\begin{rem}
The notation $R_\gamma(z_a,z_b)$ for the eigenvalue in the above lemma was chosen on purpose. If the above analysis is carried out for a Runge--Kutta scheme, the corresponding Jacobian will have the eigenvalues 1 and $R(z_a+z_b)$, in which $R$ denotes the stability function of the Runge--Kutta scheme. In this respect, the function $R_\gamma$ of the MPRK22 schemes plays the same role as the stability function $R$ of a Runge--Kutta scheme. In the following, we refer to $R_\gamma$ as the stability function of the MPRK22 schemes.
\end{rem}
To assess the stability of the non-hyperbolic fixed point $\by^*$ we must investigate the absolute value of the stability function $R_\gamma$ from \eqref{eq:R_delta}.
Theorem~\ref{Thm:_Asym_und_Instabil} states that if $\abs{R_\gamma(-\Delta t a,-\Delta t b)}>1$, the fixed point is unstable. If however $\abs{R_\gamma(-\Delta t a,-\Delta t b)}<1$, we can use Theorem~\ref{Thm_MPRK_stabil} to conclude the stability of $\by^*$.

\begin{lem}\label{Lem: R<1}Let $R_1$ be given by \eqref{def R(alpha,z)}, then the inequality $\lvert R_1(z_a,z_b)\rvert<1$ holds for all $z_a,z_b<0$ and $\alpha\geq \frac{1}{2}$.
\end{lem}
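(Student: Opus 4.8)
The plan is to reduce the two-variable claim to a single-variable one and then to bound the resulting rational function by elementary sign analysis. Since $R_1(z_a,z_b)$ depends on $z_a$ and $z_b$ only through their sum, I set $z=z_a+z_b$, which ranges over $(-\infty,0)$ as $z_a,z_b$ run over the negative reals. It then suffices to prove $\abs{R_1(z)}<1$ for all $z<0$ and $\alpha\geq\frac12$, where, writing $N(z)=2-2\alpha z-z^2$ and $D(z)=2(1-z)(1-\alpha z)$, one has $R_1(z)=N(z)/D(z)$.

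First I would record that the denominator is strictly positive on the relevant range: for $z<0$ and $\alpha\geq\frac12>0$ both factors satisfy $1-z>1$ and $1-\alpha z>1$, so $D(z)>0$. Consequently $\abs{R_1(z)}<1$ is equivalent to the pair $-D(z)<N(z)<D(z)$, and because $D>0$ these can be recast as the two polynomial inequalities $D(z)-N(z)>0$ and $D(z)+N(z)>0$.

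Both inequalities are then settled by direct expansion. Using $D(z)=2-2\alpha z-2z+2\alpha z^2$ I obtain
\[
D(z)-N(z)=-2z+(2\alpha+1)z^2=z\bigl((2\alpha+1)z-2\bigr),
\]
and for $z<0$ both factors are negative, the second because $(2\alpha+1)z<0$, so the product is positive and hence $R_1<1$. For the lower bound,
\[
D(z)+N(z)=4-(4\alpha+2)z+(2\alpha-1)z^2,
\]
where the constant term is positive, the linear term $-(4\alpha+2)z$ is positive since $z<0$, and the quadratic term is nonnegative precisely because $\alpha\geq\frac12$ forces $2\alpha-1\geq0$; thus $D(z)+N(z)>0$ and $R_1>-1$. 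Combining the two bounds gives $\abs{R_1(z)}<1$.

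I do not expect a genuine obstacle here; the only point requiring care is the lower bound $R_1>-1$, where the standing hypothesis $\alpha\geq\frac12$ enters essentially through the sign of the coefficient $2\alpha-1$ in $D+N$. For $\alpha<\frac12$ this coefficient would be negative and the bound would eventually fail as $\abs{z}\to\infty$, so the restriction $\alpha\geq\frac12$ that is built into the MPRK22 schemes is exactly what secures unconditional stability in this case.
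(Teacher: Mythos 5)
Your proof is correct and follows essentially the same route as the paper: both reduce $R_1$ to a function of the single variable $z=z_a+z_b<0$, note the denominator is positive, and verify that the numerator is strictly smaller in absolute value than the denominator. The only cosmetic difference is that the paper establishes $\abs{N(z)}<D(z)$ in one chain via the triangle inequality, whereas you split it into the two equivalent polynomial inequalities $D-N>0$ and $D+N>0$; your closing remark correctly identifies that $\alpha\geq\tfrac12$ is needed only for the lower bound, exactly where the paper uses $2\alpha\geq 1$.
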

\begin{proof}
First, we realize that $R_1(z_a,z_b)$ only depends on $z_a+z_b$ and hence can be written as $R_1(z)=\frac{2-2\alpha z -z^2}{2(1-z)(1-\alpha z)}$ with $z=z_a+z_b$. To prove the lemma, we need to show $\abs{R_1(z)}<1$ for all $z<0$ and $\alpha\geq \frac12$.
Using $-2z>0$ and $2\alpha\geq 1$, this follows from
\begin{multline*}
\abs{2-2\alpha z-z^2}\leq \abs{2(1-\alpha z)}+\abs{z^2}=2(1-\alpha z)+z^2\\<2(1-\alpha z)+2\alpha z^2-2z=2(1-\alpha z)-2z(1-\alpha z)=2(1-z)(1-\alpha z)=\abs{2(1-z)(1-\alpha z)}.\qedhere
\end{multline*}
\end{proof}
The above lemma states that the stability function of an MPRK22($\alpha$) scheme has absolute value less than one for all time step sizes $\Delta t>0$ and matrix elements $a,b>0$. This allows for the application of  Theorem~\ref{Thm_MPRK_stabil} to conclude that $\by^*$ is indeed a stable fixed point of MPRK22($\alpha$) schemes.
\begin{cor}\label{cor:MPRK22stabil}
Let $\by^*>\bzero$ be an arbitrary steady state of \eqref{PDS_test}.
\begin{enumerate}
\item The MPRK22($\alpha$) schemes are unconditionally stable, in the sense that any $\by^*$ is a stable fixed point of the MPRK22($\alpha$) schemes independent of the time step size $\Delta t$.
\item For every MPRK22($\alpha$) scheme there exists a $\delta>0$, such that $\Vert \by^{0} \Vert_1=\Vert \by^* \Vert_1$ and $\norm{\by^0-\by^*}<\delta$ imply $\by^n\to \by^*$ as $n\to \infty$ independent of the time step size $\Delta t$.
\end{enumerate}
\end{cor}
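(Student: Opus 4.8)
The plan is to apply Theorem~\ref{Thm_MPRK_stabil} directly, since the preceding lemmas have already assembled every ingredient it requires. First I would fix $a,b>0$ and a time step $\Delta t>0$, and take an arbitrary steady state $\by^*>\bzero$ of \eqref{PDS_test}; by the nullspace computation these are exactly the points $\by^*=r(b,a)^T$ with $r>0$. The MPRK22($\alpha$) schemes correspond to the case $\gamma=1$ in \eqref{def_g(y)}, so the relevant stability function will be $R_1$.

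To invoke Theorem~\ref{Thm_MPRK_stabil}, I would verify its three hypotheses on the map $\bg$ from \eqref{def_g(y)}. The regularity $\bg\in\mathcal{C}^2(\R^2_{>0})$ is provided by Lemma~\ref{Lem:C2}, which in fact gives $\bg\in\mathcal{C}^\infty(\R^2_{>0})$. The requirement that every $r\by^*$ be a fixed point is precisely Lemma~\ref{Lem f(y*)=y*}, since the rays $r\by^*$ with $r>0$ coincide with the line of steady states of \eqref{PDS_test}. Finally, the $1$-norm invariance $\norm{\by^{n+1}}_1=\norm{\by^n}_1$ of the iterates is the conservativity of the MPRK22 schemes. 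With these checks in place, Theorem~\ref{Thm_MPRK_stabil} applies and guarantees $\sigma(\bD\bg(\by^*))=\{1,R\}$ with $R\in\R$.

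The core of the argument is then the size of $\abs{R}$. By Lemma~\ref{Lem: Df (1,-1)}, in the case $\gamma=1$ the nontrivial eigenvalue is $R=R_1(-\Delta t a,-\Delta t b)$ with $R_1$ as in \eqref{def R(alpha,z)}. Because $a,b>0$ and $\Delta t>0$, the arguments obey $-\Delta t a,-\Delta t b<0$, so Lemma~\ref{Lem: R<1} yields $\abs{R}<1$ irrespective of the magnitude of $\Delta t$. Part~a) of Theorem~\ref{Thm_MPRK_stabil} then delivers stability of $\by^*$, and since this holds for every $\Delta t>0$ and every such $\by^*$, the stability is unconditional, establishing statement~a) of the corollary. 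For statement~b), stability combined with part~b) of Theorem~\ref{Thm_MPRK_stabil} produces a constant $\delta>0$ for which $\norm{\by^0}_1=\norm{\by^*}_1$ and $\norm{\by^0-\by^*}<\delta$ force $\by^n\to\by^*$, again uniformly in $\Delta t$.

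There is little genuine difficulty left, as the substantive work has been front-loaded into Lemmas~\ref{Lem: Df (1,-1)} and~\ref{Lem: R<1} and into Theorem~\ref{Thm_MPRK_stabil}. If anything, the only point demanding care is matching the corollary's standing assumptions to those of Theorem~\ref{Thm_MPRK_stabil}: one must use $a,b>0$ to ensure $\by^*>\bzero$, so that $\bg$ is defined and $\mathcal{C}^2$ at the fixed point (the degenerate cases $a=0$ or $b=0$ being excluded and treated separately), and one must invoke conservativity to supply the $1$-norm invariance required by the theorem.
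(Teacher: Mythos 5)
Your proposal is correct and follows exactly the route the paper intends: the corollary is stated there as an immediate consequence of Lemma~\ref{Lem: R<1} (giving $\abs{R_1(-\Delta t a,-\Delta t b)}<1$ for all $\Delta t>0$) combined with Theorem~\ref{Thm_MPRK_stabil}, whose hypotheses are supplied by Lemmas~\ref{Lem f(y*)=y*}, \ref{Lem:C2} and~\ref{Lem: Df (1,-1)} just as you describe. Your careful verification of the theorem's hypotheses is a welcome elaboration of what the paper leaves implicit.
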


The situation is different for MPRK22ncs($\alpha$) schemes, for which  the absolute value of the stability function $R_0$ may exceed 1, if $\alpha$ and $\Delta t$ are not chosen properly.
\begin{lem}\label{Prop:R0_properties}
Let $R_0$ be given by \eqref{eq:R0}.
If $\alpha\geq 1$ the inequality
\[\abs{R_0(z_a,z_b)}<1\]
is satisfied for all $z_a,z_b<0$. If $\frac12\leq\alpha<1$, then
\[\abs{R_0(z_a,z_b)}<1\quad\text{for all}\quad z_a<0 \text{  and  } f(z_a)<z_b<0\]
and
\[\abs{R_0(z_a,z_b)}> 1\quad\text{for all}\quad z_a,z_b<0 \text{  and  } z_b<f(z_a),\]
where $f$ is defined by
\begin{equation}\label{eq:fz}
f(\xi)=-\frac{p(\xi)}{2}-\sqrt{\frac{p(\xi)^2}{4}-q(\xi)}
\end{equation}
with
\begin{equation}\label{eq:pq}
p(\xi)=-\frac{2 (1 - \alpha \xi) (2 \alpha + (1 - \alpha) \xi + 
   1)}{(2 \alpha - 1) (1 - \alpha \xi) + \alpha \xi},\quad q(\xi)=\frac{2 (1 - \alpha \xi) (2 - \xi) - 
 \xi^2}{(2 \alpha - 1) (1 - \alpha \xi) + \alpha \xi}.
\end{equation}
\end{lem}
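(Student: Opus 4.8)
The plan is to turn the two-sided estimate $\abs{R_0}<1$ into one polynomial inequality and then study its sign. Throughout, abbreviate $g(z)=\frac{z}{1-\alpha z}$, $s=z_a+z_b$ and $T=g(z_a)+g(z_b)$, so that $R_0=\frac{2-sT}{2(1-s)}$. For $z_a,z_b<0$ the denominator $2(1-s)$ is positive, so I may clear it. The inequality $R_0<1$ is equivalent to $s(2-T)<0$; since $g(z)<0$ for $z<0$ we have $T<0$, hence $2-T>0$, and because $s<0$ this always holds. Therefore $\abs{R_0}<1\iff R_0>-1$, and a one-line rearrangement gives $R_0>-1\iff s(T+2)<4$, while $\abs{R_0}>1\iff s(T+2)>4$. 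This equivalence is the backbone of the whole argument.

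For $\alpha\geq1$ the conclusion is then immediate: $g$ is increasing on $(-\infty,0)$ with $g(0)=0$ and $\lim_{z\to-\infty}g(z)=-\tfrac1\alpha$, so $-\tfrac1\alpha<g(z)<0$ and thus $T+2>2-\tfrac2\alpha\geq0$. As $s<0$, the product $s(T+2)$ is negative, in particular below $4$, so $\abs{R_0}<1$ for all $z_a,z_b<0$.

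For $\tfrac12\leq\alpha<1$ I would fix $z_a=\xi$ and regard the condition as a statement in $\eta=z_b$. Multiplying $s(T+2)-4$ by $1-\alpha\eta>0$ produces a quadratic $P(\eta)$ with $\sgn(s(T+2)-4)=\sgn(P(\eta))$; dividing by the leading coefficient $1-\alpha(2+g(\xi))$ turns $P(\eta)=0$ into $\eta^2+p(\xi)\eta+q(\xi)=0$, whose smaller root is exactly $f(\xi)$ from \eqref{eq:fz}--\eqref{eq:pq}. Thus $\abs{R_0}<1\iff P(\eta)<0$ and $\abs{R_0}>1\iff P(\eta)>0$, and everything reduces to locating the roots of $P$ relative to the origin.

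The delicate part, which I expect to be the main obstacle, is this sign analysis. A convenient anchor is $P(0)=\xi(2+g(\xi))-4<0$ for every $\xi<0$, which holds because $2+g(\xi)>2-\tfrac1\alpha\geq0$. The orientation of the parabola is read off from $1-\alpha(2+g(\xi))=-\frac{D}{1-\alpha\xi}$, with $D=(2\alpha-1)(1-\alpha\xi)+\alpha\xi$ the common denominator in \eqref{eq:pq}; since the numerator $2(1-\alpha\xi)(2-\xi)-\xi^2$ of $q$ is positive for $\xi<0$ (using $\alpha\geq\tfrac12$), one gets $\sgn(q(\xi))=\sgn(D)$, and $D<0$ exactly for $\xi<-\frac{2\alpha-1}{2\alpha(1-\alpha)}$. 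In that range $P$ opens upward and $q(\xi)<0$, so its roots straddle $0$; together with $P(0)<0$ this identifies the negative root as $f(\xi)$, yielding $P<0$ on $(f(\xi),0)$ and $P>0$ on $(-\infty,f(\xi))$, which is precisely the claimed dichotomy. In the complementary range $D>0$ one checks $p(\xi)<0$ and $q(\xi)>0$, so $f(\xi)>0$ and $P<0$ for all $\eta<0$, i.e.\ stability for every $z_b<0$. The crux is exactly this case split and the verification that $f(\xi)$ is a true root of $s(T+2)=4$, not a spurious zero introduced by the factor $1-\alpha\eta$.
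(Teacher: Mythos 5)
Your proof is correct and arrives at the same quadratic $\eta^2+p(\xi)\eta+q(\xi)$ as the paper, but by a genuinely different route. The paper first shows that $R_0$ is strictly increasing in each of $z_a,z_b$ (by computing and sign-checking $\partial R_0/\partial z_a$), then sandwiches $R_0$ between its limits at $(0,0)$ and $(-\infty,-\infty)$ to obtain $-\tfrac1\alpha<R_0(z_a,z_b)<1$, and finally solves $R_0(z_a,z_b)=-1$ for $z_b$ to produce $f$. You instead clear denominators once and for all: the observation that $R_0<1$ is automatic and that $\abs{R_0}<1$ reduces to the single inequality $s(T+2)<4$ replaces the monotonicity computation entirely, and the case $\alpha\geq 1$ drops out of $T+2>2-\tfrac2\alpha\geq 0$ in two lines. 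What you pay for this is the sign analysis of the quadratic $P$ (orientation via the leading coefficient $-D/(1-\alpha\xi)$, the anchor $P(0)<0$, and the location of the roots), which substitutes for the paper's monotonicity-plus-continuity argument; the bookkeeping is comparable, and your identification of the monic quadratic with $\eta^2+p\eta+q$ checks out.

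There is one place where your route is genuinely more informative than the paper's. The paper asserts that for each $z_a<0$ there is exactly one $z_b$ with $R_0(z_a,z_b)=-1$; by its own limit computation this requires $-\frac{1-2\alpha z_a}{2\alpha(1-\alpha z_a)}<-1$, which holds precisely in your regime $D(z_a)<0$, i.e. $z_a<-\frac{2\alpha-1}{2\alpha(1-\alpha)}$. In the complementary regime $D(z_a)>0$ your analysis correctly yields $f(z_a)>0$ and $\abs{R_0(z_a,z_b)}<1$ for every $z_b<0$, so the lemma's second displayed claim, read literally (the condition $z_b<f(z_a)$ together with $z_b<0$ is then all of $z_b<0$), actually fails there: for instance $\alpha=0.8$, $z_a=-1$ gives $f(-1)\approx 1.18$ while $\abs{R_0(-1,-10)}\approx 0.68$. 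Your case split is therefore not optional pedantry; it shows that the instability strip is empty for $z_a$ near $0$ when $\tfrac12<\alpha<1$, a qualification the paper's statement and proof both omit. One small imprecision on your side: when $D>0$ the two roots may be complex rather than real and positive, so "$f(\xi)>0$" is not always literally meaningful; but in either subcase $P$ has no zero on $(-\infty,0]$, and $P(0)<0$ then gives $P<0$ there, so your conclusion stands.
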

\begin{proof}
First, we show that $R_0$ is strictly increasing with respect to $z_a$ as well as $z_b$. To see this, we consider the partial derivative with respect to $z_a$, which is given by
\begin{align*}
\frac{\partial R_0}{\partial z_a}(z_a,z_b)&=\frac{-\Bigl(\frac{z_a}{1-\alpha z_a}+\frac{z_b}{1-\alpha z_b}\Bigr)- (z_a+z_b)\frac{1}{(1-\alpha z_a)^2}}{2(1-(z_a+z_b))}+\frac{2- (z_a+z_b)  \Bigl(\frac{z_a}{1-\alpha z_a}+\frac{z_b}{1-\alpha z_b}\Bigr)}{2(1-(z_a+z_b))^2}\\
&=\frac{-\biggl(\Bigl(\frac{z_a}{1-\alpha z_a}+\frac{z_b}{1-\alpha z_b}\Bigr)+ \frac{z_a+z_b}{(1-\alpha z_a)^2}\biggr)(1-(z_a+z_b))+2- (z_a+z_b)  \Bigl(\frac{z_a}{1-\alpha z_a}+\frac{z_b}{1-\alpha z_b}\Bigr)}{2(1-(z_a+z_b))^2}\\
&=\frac{-\Bigl(\frac{z_a}{1-\alpha z_a}+\frac{z_b}{1-\alpha z_b}\Bigr)- \frac{z_a+z_b}{(1-\alpha z_a)^2}+\frac{(z_a+z_b)^2}{(1-\alpha z_a)^2}+2}{2(1-(z_a+z_b))^2}
=\frac{-\Bigl(\frac{z_a}{1-\alpha z_a}+\frac{z_b}{1-\alpha z_b}\Bigr)+\frac{(z_a+z_b)(z_a+z_b-1)}{(1-\alpha z_a)^2}+2}{2(1-(z_a+z_b))^2}.
\end{align*}
Since $z_a,z_b<0$ and $\alpha>0$, we find $\frac{\partial R_0}{\partial z_a}(z_a,z_b)>0$, i.\,e. $R_0$ is strictly increasing with respect to $z_a$. In addition, due to the symmetry $R_0(z_a,z_b)=R_0(z_b,z_a)$, it is also strictly in increasing with respect to $z_b$.

Furthermore, we have 
\[\lim_{(z_a,z_b)\to(0,0)} R_0(z_a,z_b)=\lim_{(z_a,z_b)\to(0,0)}\frac{2- (z_a+z_b)  (\frac{z_a}{1-\alpha z_a}+\frac{z_b}{1-\alpha z_b})}{2(1-(z_a+z_b))}=1\]
and due to the monotonicity $R_0(z_a,z_b)<1$ for all $z_a,z_b<0$. To compute a lower bound for $R_0$, we rewrite $R_0$ in the form
\begin{equation}\label{eq:R0_aux}
R_0(z_a,z_b)
=\frac{2(1-\alpha z_a)(1-\alpha z_b)-(z_a+z_b)(z_a(1-\alpha z_b)+z_b(1-\alpha z_a))}{2(1-(z_a+z_b))(1-\alpha z_a)(1-\alpha z_b)},
\end{equation}
which can also be written as
\begin{equation*}
R_0(z_a,z_b)=\frac{2(1-\alpha z_a)(\frac{1}{z_b^2}-\frac{\alpha} {z_b})-(\frac{z_a}{z_b}+1)(z_a(\frac{1}{z_b}-\alpha)+(1-\alpha z_a))}{2(\frac{1}{z_b}-(\frac{z_a}{z_b}+1))(1-\alpha z_a)(\frac{1}{z_b}-\alpha)}.
\end{equation*}
Now we see
\begin{equation*}
\lim_{z_b\to -\infty}R_0(z_a,z_b)=\frac{-(-\alpha z_a+1-\alpha z_a)}{-2(1-\alpha z_a)(-\alpha)}=-\frac{1-2\alpha z_a}{2\alpha(1-\alpha z_a)}
\end{equation*}
and 
\begin{equation*}
\lim_{z_a\to -\infty}\lim_{z_b\to -\infty}R_0(z_a,z_b)=\lim_{z_a\to -\infty}-\frac{1-2\alpha z_a}{2\alpha(1-\alpha z_a)}=\lim_{z_a\to -\infty}-\frac{\frac{1}{z_a}-2\alpha }{2\alpha(\frac{1}{z_a}-\alpha)}=-\frac{1}{\alpha}.
\end{equation*}
Since $R_0$ is continuous, we even have $\lim_{(z_a,z_b)\to(-\infty,-\infty)}R_0(z_a,z_b)=-\frac{1}{\alpha}$ and since $R_0$ is strictly increasing in $z_a$ and $z_b$, we obtain $-\frac{1}\alpha < R_0(z_a,z_b)$. Altogether, we know $-\frac{1}{\alpha}<R_0(z_a,z_b)<1$ for all $z_a,z_b<0$, which implies \[\abs{R_0(z_a,z_b)}<1\quad\text{for all}\quad z_a,z_b<0\text{   and   }\alpha\geq 1.\] 
However, if $\alpha<1$ then $\lim_{(z_a,z_b)\to(-\infty,\infty)}R_0(z_a,z_b)=-\frac 1\alpha<-1$ and hence there exist $ z_a$ and $ z_b$ for which $\abs{R_0(z_a,z_b)}>1$. 

Due to the monotonicity and continuity of $R_0$ there is exactly one $z_b$ for a given $z_a$, such that $R_0( z_a, z_b)=-1$. To find this $z_b$ we need to solve the equation $R_0(z_a,z_b)=-1$ for $z_b$. According to \eqref{eq:R0_aux}, this is equivalent to
\begin{equation*}
\frac{2(1-\alpha z_a)(1-\alpha z_b)-(z_a+z_b)(z_a(1-\alpha z_b)+z_b(1-\alpha z_a))}{2(1-(z_a+z_b))(1-\alpha z_a)(1-\alpha z_b)}=-1
\end{equation*}
or 
\begin{equation*}
2(1-\alpha z_a)(1-\alpha z_b)-(z_a+z_b)(z_a(1-\alpha z_b)+z_b(1-\alpha z_a))=-2(1-(z_a+z_b))(1-\alpha z_a)(1-\alpha z_b).
\end{equation*}
A technical computation reveals that this equation can be rewritten in the form
\begin{equation*}
\bigl((2 \alpha - 1) (1 - \alpha z_a) + \alpha z_a\bigr)z_b^2-\bigl(2 (1 - \alpha z_a) (2 \alpha + (1 - \alpha) z_a + 
   1)\bigr)z_b + 2 (1 - \alpha z_a) (2 - z_a) - 
    z_a^2=0
\end{equation*}
and solving for $z_b$ shows $z_b=f(z_a)$, with $f$ defined in \eqref{eq:fz}, \eqref{eq:pq}.
Due to the monotonicity of $R_0$, we have $R_0(z_a,z_b)<-1$ for all $z_a<0$ and $z_b<f(z_a)$ and $-1<R_0(z_a,z_b)<1$ for all $z_a<0$ and $f(z_a)<z_b<0$. Hence, we have also proven the statement of the lemma for the case $\frac12\leq\alpha<1$.
\end{proof}
An immediate consequence of the above lemma in combination with Theorem~\ref{Thm:_Asym_und_Instabil} and Theorem~\ref{Thm_MPRK_stabil} is the following corollary.
\begin{cor}\label{cor:unstable}
Let $\by^*>\bzero$ be an arbitrary steady state of \eqref{PDS_test} and  $\Delta t_\alpha^*>0$ be the unique solution of $f(-\Delta t_\alpha^* a)=-\Delta t_\alpha^* b$, where $f$ is defined in \eqref{eq:fz}.

\begin{enumerate}
\item 
If $\alpha\geq 1$ or $\Delta t<\Delta t_\alpha^*$, then the MPRK22ncs($\alpha$) schemes are stable, in the sense that any $\by^*$ is a stable fixed point of the MPRK22ncs($\alpha$) schemes.
\item For every MPRK22ncs($\alpha$) scheme with $\alpha\geq 1$ or $\Delta t<\Delta t_\alpha^*$ there exists a $\delta>0$, such that $\Vert \by^{0} \Vert_1=\Vert \by^* \Vert_1$ and $\norm{\by^0-\by^*}<\delta$ imply $\by^n\to \by^*$ as $n\to \infty$.
\item  If $\Delta t>\Delta t_\alpha^*$ and $\frac12\leq\alpha < 1$, then the MPRK22ncs($\alpha$) schemes are unstable, in the sense that every steady state $\by^*$ of \eqref{PDS_test} is an unstable fixed point of the MPRK22ncs($\alpha$) schemes.
\end{enumerate}
\end{cor}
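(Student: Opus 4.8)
The plan is to combine the spectral information from Lemma~\ref{Lem: Df (1,-1)} with the stability criteria of Theorems~\ref{Thm:_Asym_und_Instabil} and~\ref{Thm_MPRK_stabil}, so that everything reduces to the sign of $\abs{R_0}-1$ as analysed in Lemma~\ref{Prop:R0_properties}. Fix a steady state $\by^*>\bzero$ of \eqref{PDS_test}; as noted, such a state in $\R^2_{>0}$ forces $a,b>0$. Put $z_a=-\Delta t a<0$ and $z_b=-\Delta t b<0$. By Lemma~\ref{Lem: Df (1,-1)} the Jacobian $\bD\bg(\by^*)$ has spectrum $\{1,R_0(z_a,z_b)\}$, so in the notation of Theorem~\ref{Thm_MPRK_stabil} the relevant value is $R=R_0(z_a,z_b)$, and all hypotheses of that theorem are met since $\bg\in\mathcal C^\infty(\R^2_{>0})$, the fixed points lie on a ray through the origin, and the scheme is conservative.

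First I would dispose of the case $\alpha\geq 1$. Here Lemma~\ref{Prop:R0_properties} gives $\abs{R_0(z_a,z_b)}<1$ for all $z_a,z_b<0$, hence $\abs R<1$. Theorem~\ref{Thm_MPRK_stabil}\,a) then yields stability of $\by^*$, and since stability holds, Theorem~\ref{Thm_MPRK_stabil}\,b) gives the local convergence statement under the constraint $\norm{\by^0}_1=\norm{\by^*}_1$. This settles claims (a) and (b) whenever $\alpha\geq 1$.

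The substance lies in the regime $\tfrac12\leq\alpha<1$, where I would first establish existence and uniqueness of $\Delta t_\alpha^*$ and then read off the sign of $\abs{R_0}-1$. The clean device is to restrict $R_0$ to the ray $\Delta t\mapsto(z_a,z_b)=(-\Delta t a,-\Delta t b)$, i.e.\ to study $g(\Delta t)=R_0(-\Delta t a,-\Delta t b)$ for $\Delta t>0$. Since Lemma~\ref{Prop:R0_properties} shows $R_0$ is strictly increasing in each argument, and increasing $\Delta t$ decreases both $z_a$ and $z_b$, the map $g$ is continuous and strictly decreasing. The boundary values established in Lemma~\ref{Prop:R0_properties}, namely $\lim_{(z_a,z_b)\to(0,0)}R_0=1$ and $\lim_{(z_a,z_b)\to(-\infty,-\infty)}R_0=-\tfrac1\alpha<-1$, give $g(\Delta t)\to 1$ as $\Delta t\to 0^+$ and $g(\Delta t)\to-\tfrac1\alpha$ as $\Delta t\to\infty$. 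By the intermediate value theorem there is a unique $\Delta t_\alpha^*>0$ with $g(\Delta t_\alpha^*)=-1$; and because $R_0(z_a,z_b)=-1$ is equivalent to $z_b=f(z_a)$ by the derivation of $f$ in the proof of Lemma~\ref{Prop:R0_properties}, this $\Delta t_\alpha^*$ is exactly the unique solution of $f(-\Delta t_\alpha^* a)=-\Delta t_\alpha^* b$, matching the corollary's definition.

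With $g$ strictly decreasing and $g<1$ throughout, the remaining sign analysis is immediate. For $\Delta t<\Delta t_\alpha^*$ one has $-1<g(\Delta t)<1$, hence $\abs{R_0(z_a,z_b)}<1$, and Theorem~\ref{Thm_MPRK_stabil} again delivers stability and local convergence, completing (a) and (b) for $\alpha<1$. For $\Delta t>\Delta t_\alpha^*$ one has $g(\Delta t)<-1$, hence $\abs{R_0(z_a,z_b)}>1$, and Theorem~\ref{Thm:_Asym_und_Instabil}\,b) forces $\by^*$ to be unstable, which is (c). The one step that is genuinely new relative to Lemma~\ref{Prop:R0_properties} is the passage to the single-variable function $g$ and its strict monotonicity along the ray: the lemma controls $R_0$ as a function of $(z_a,z_b)$ and locates the level set $\{R_0=-1\}$ via $f$, but the uniqueness of the crossing \emph{time} $\Delta t_\alpha^*$ and the clean stable/unstable dichotomy require this reduction. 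I expect it to be the main (though not difficult) point to spell out, since the monotonicity and the joint limit it relies on are both furnished directly by Lemma~\ref{Prop:R0_properties}.
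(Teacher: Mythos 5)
Your proposal is correct and follows exactly the route the paper intends: the paper gives no written proof, declaring the corollary an immediate consequence of Lemma~\ref{Prop:R0_properties} together with Theorems~\ref{Thm:_Asym_und_Instabil} and~\ref{Thm_MPRK_stabil}, and your argument is precisely the assembly of those pieces. The one detail you add beyond what the paper records --- the strict monotonicity of $\Delta t\mapsto R_0(-\Delta t a,-\Delta t b)$ along the ray, which yields existence and uniqueness of $\Delta t_\alpha^*$ and the clean dichotomy --- is a genuine and correctly executed filling-in of a step the paper leaves implicit.
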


According to part a) of Corollary \ref{cor:unstable}, we can define the stability regions for MPRK22ncs($\alpha$) schemes with $\frac12\leq \alpha<1$ as the set of points $S(\alpha) \subset \R^- \times \R^-$ lying above the graph of the function $f$ from \eqref{eq:fz}, see Figure~\ref{StabilityRegion}. We also want to pay attention to the fact that the MPRK22ncs($\alpha$) schemes are stable for $\alpha\geq 1$, which coincides with the expansion of the stability region $S(\alpha)$ for $\alpha\to 1$ that can be observed within Figure~\ref{StabilityRegion}.


\begin{figure}[!h]
\begin{subfigure}[t]{0.495\textwidth}
\includegraphics[width=\textwidth]{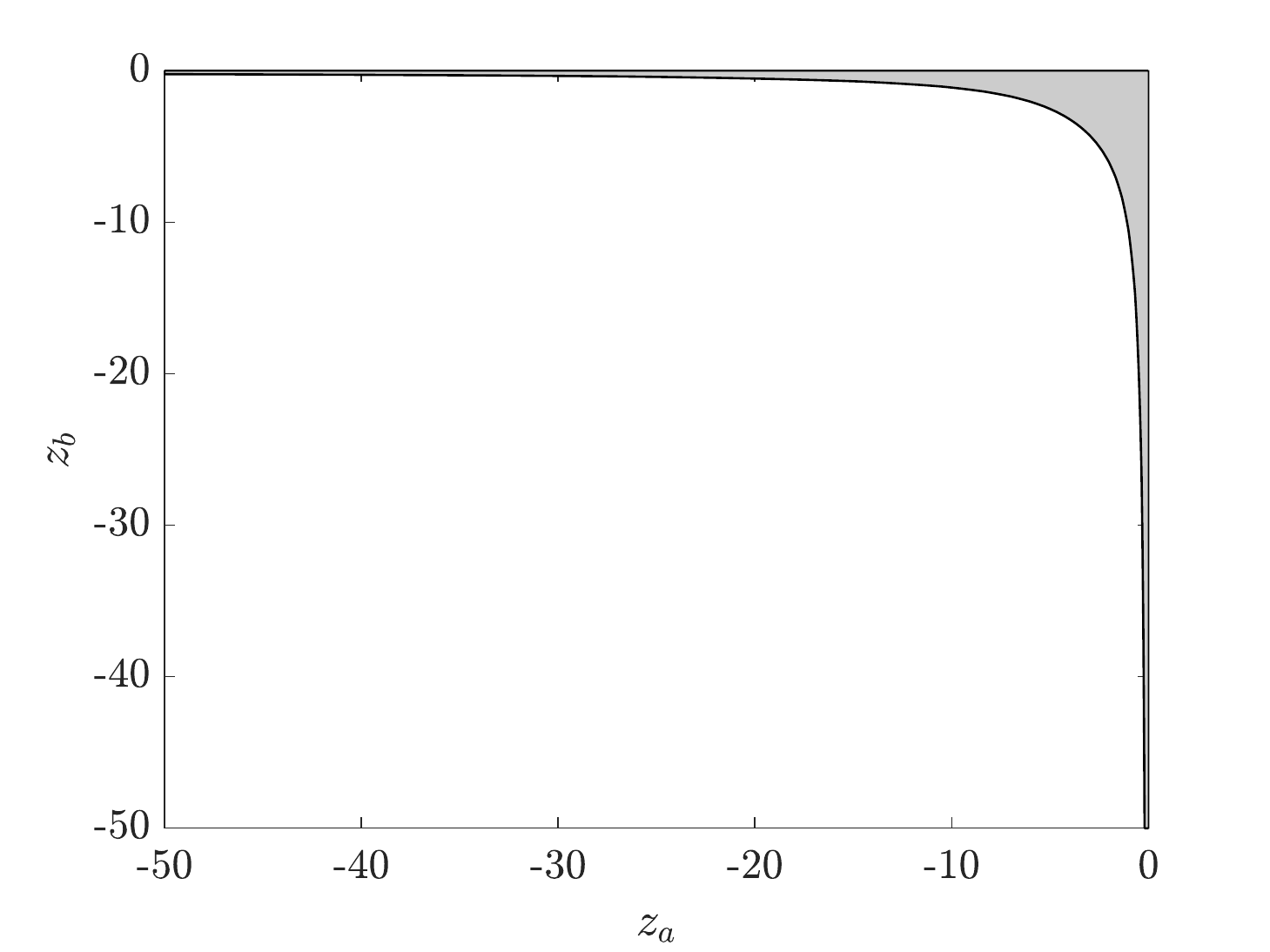}
\subcaption{$\alpha=0.5$}
\end{subfigure}
\begin{subfigure}[t]{0.495\textwidth}
\includegraphics[width=\textwidth]{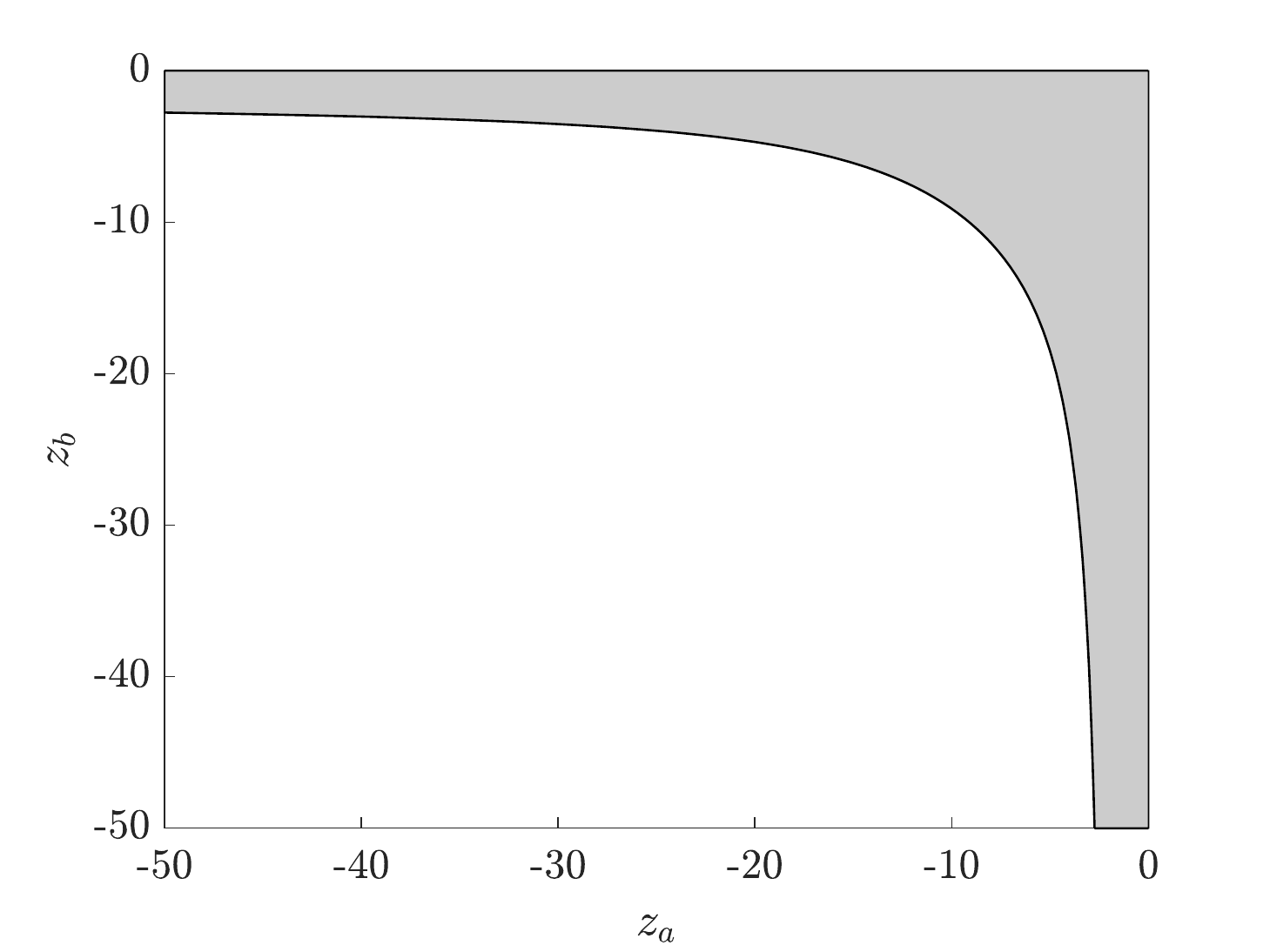}
\subcaption{$\alpha=0.8$}
\end{subfigure}\\
\begin{subfigure}[t]{0.495\textwidth}
\includegraphics[width=\textwidth]{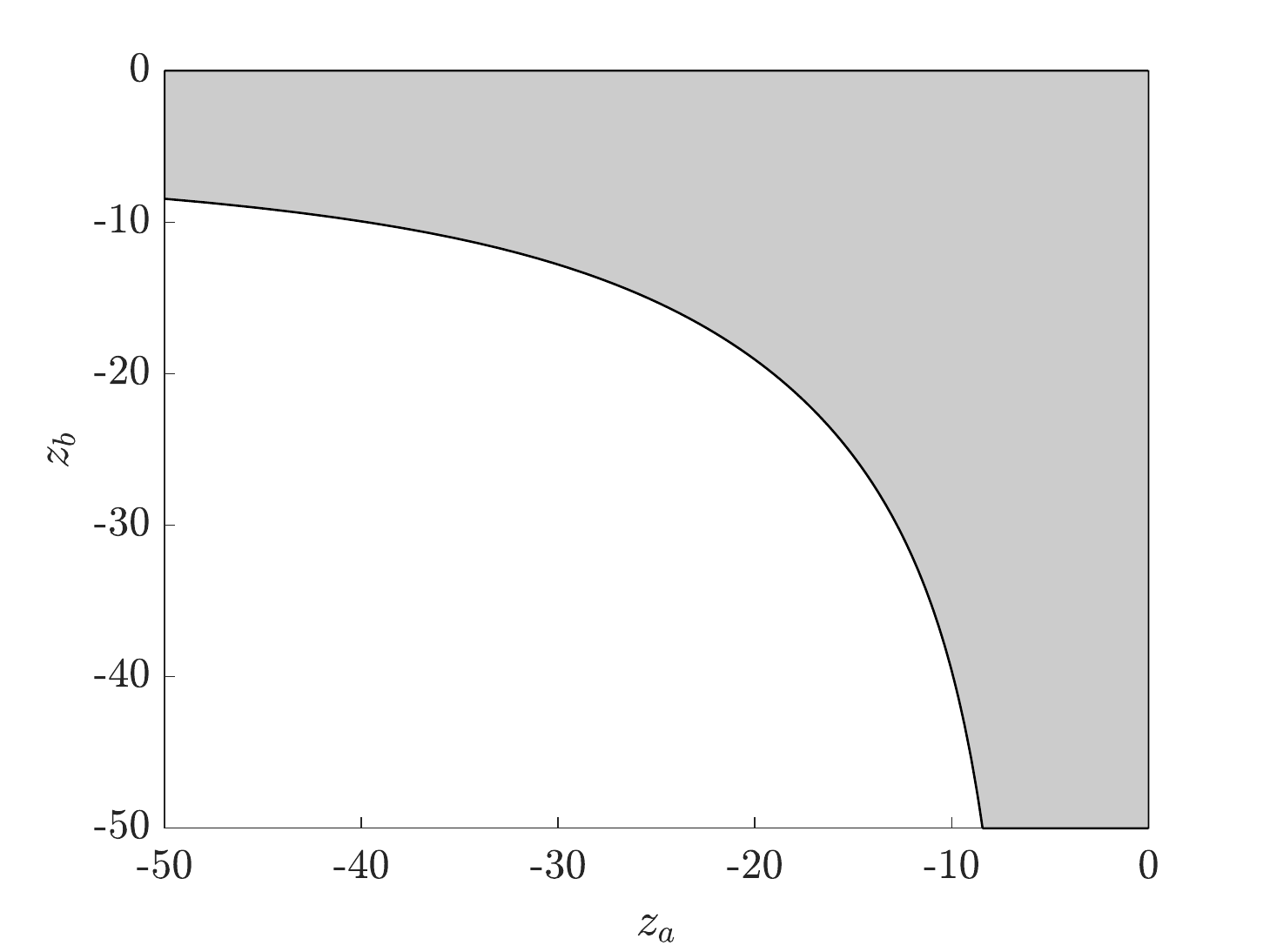}
\subcaption{$\alpha=0.9$}
\end{subfigure}
\begin{subfigure}[t]{0.495\textwidth}
\includegraphics[width=\textwidth]{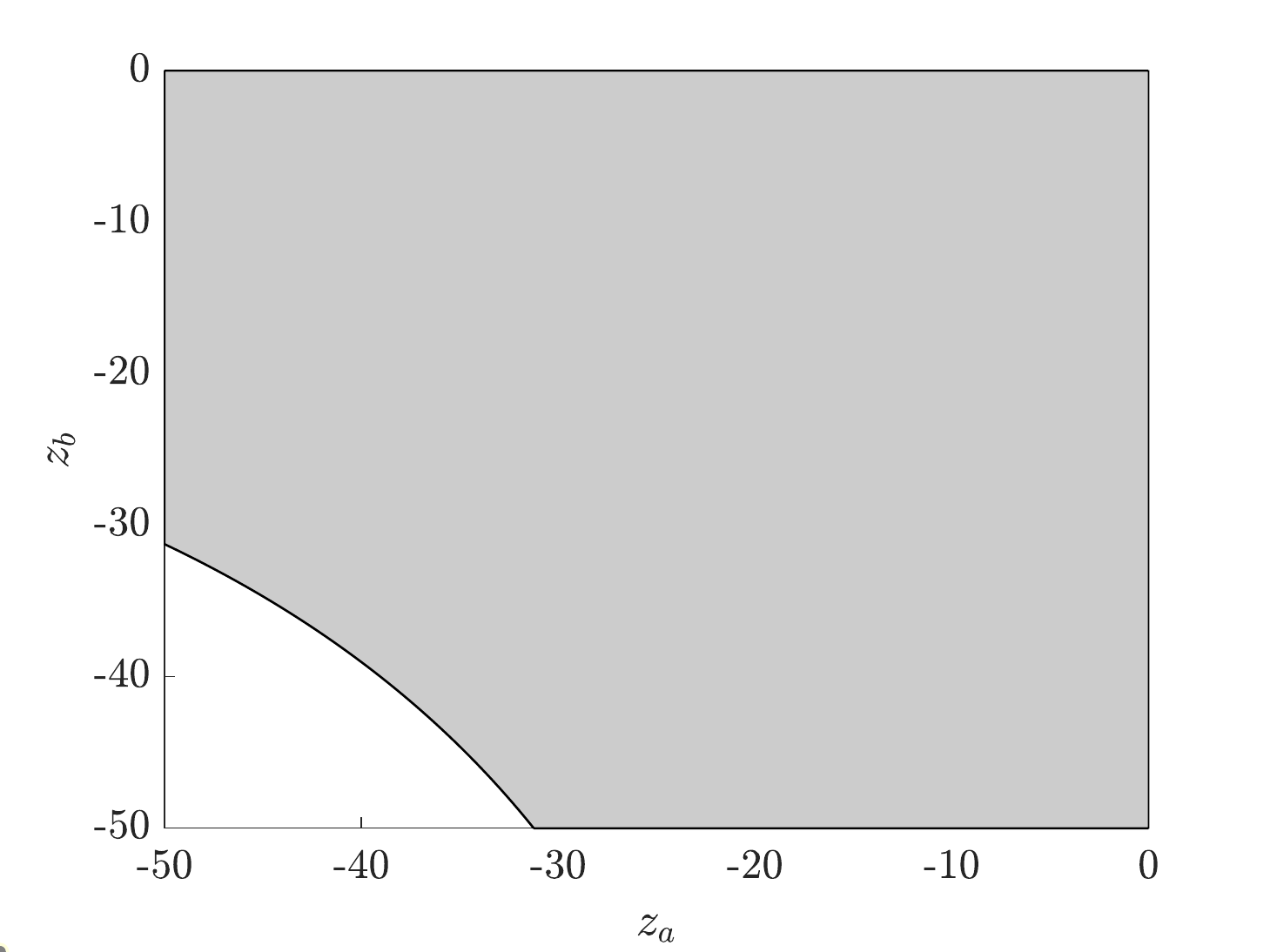}
\subcaption{$\alpha=0.95$}
\end{subfigure}
\caption{For each $\alpha$ the grey area indicates the stability region $S(\alpha)$ of the MPRK22ncs$(\alpha)$ scheme for $z_a,z_b\in[-50,0)$.}\label{StabilityRegion}
\end{figure}


\subsection{The cases $a=0$ or $b = 0$}\label{b=0_1}
As mentioned before, the cases $a=0$ and $b=0$ are special, in that no steady states of \eqref{PDS_test} are contained in $\R^2_{>0}$. 
In the following, we only discuss the case $b=0$, i.\,e.
\begin{equation}\label{eq:system_b=0}
y_1'=-a y_1,\quad y_2=a y_1,\quad a>0,
\end{equation} 
since interchanging the roles of $y_1$ and $y_2$ leads to the case $a=0$. 
This system \eqref{PDS_test} has the steady states $\by^*=(0,y_2^*)\in\R^2$ with $y_2^*\geq 0$ and we show that 
the MPRK22 iterations with an arbitrary initial condition $\by^0>\bzero$ converge to the steady state $\by^*=(0,y_1^0+y_2^0)$, which is the steady state of the continuous problem, see \eqref{exact_sol}. 
Since the MPRK22 schemes are conservative, it is sufficient to prove $y_1^n\to 0$ as $n\to \infty$, since this directly implies $y_2^n\to y_1^0+y_2^0$ as $n\to \infty$. 

Application of the MPRK22 schemes \eqref{eq:MPRK22b} to \eqref{eq:system_b=0} results in 
\begin{subequations}
	\begin{align}
y_1^{(1)} &= y_1^n,\\
y_1^{(2)} &= y_1^n - \alpha\Delta t a y_1^{(1)}\frac{y_1^{(2)}}{y_1^{(1)}}= y_1^n - \alpha\Delta t a y_1^{(2)},\label{eq:y2_b=0}\\
y_1^{n+1} &= y_1^n - \Delta t
	\Biggl(\biggl(1-\frac1{2\alpha}\biggr) a y_1^{(1)}+ \frac1{2\alpha} a y_1^{(2)}\Biggr)\frac{y_1^{n+1}}{(y_1^{(2)})^{\frac{1}{\alpha}}(y_1^{(1)})^{1-\frac{1}{\alpha}}}.	\label{eq:yneu_b=0}
	\end{align}
\end{subequations}
We note that there is no dependence on $\gamma$, hence, the following applies to both MPRK22($\alpha$) and MPRK22ncs($\alpha$) schemes.
Solving \eqref{eq:y2_b=0} for $y_1^{(2)}$ yields
\begin{equation*}
y_1^{(2)}=\frac{y_1^n}{1+\alpha\Delta t a}.
\end{equation*}
and inserting this into \eqref{eq:yneu_b=0} shows
\begin{align*}
y_1^{n+1} &= y_1^n - \Delta t a
\Biggl(\biggl(1-\frac1{2\alpha}\biggr) y_1^{n}+ \frac1{2\alpha} \cdot \frac{y_1^n}{1+\alpha\Delta t a}\Biggr)\frac{(1+\alpha\Delta t a)^{\frac{1}{\alpha}}}{(y_1^n)^\frac{1}{\alpha}}\cdot \frac{y_1^{n+1}}{(y_1^{n})^{1-\frac{1}{\alpha}}}\\
&= y_1^n - \Delta t a
\Biggl(\biggl(1-\frac1{2\alpha}\biggr) +  \frac{1}{2\alpha(1+\alpha\Delta t a)}\Biggr)(1+\alpha\Delta t a)^{\frac{1}{\alpha}}y_1^{n+1}\\
&= y_1^n -\frac{ \Delta t a\bigl(
1+(\alpha-\frac12)\Delta t a\bigr)}{(1+\alpha\Delta t a)^{1-\frac{1}{\alpha}}}y_1^{n+1}.
\end{align*}
Solving for $y_1^{n+1}$ leads to
\begin{equation*}
y_1^{n+1}=R(-\Delta t a)y_1^n,
\end{equation*}
where $R$ is defined by
\begin{equation*}
R(z)=\biggl(1-\frac{ z(1-(\alpha-\frac12)z)}{(1-\alpha z)^{1-\frac{1}{\alpha}}}\biggr)^{-1}=\frac{(1-\alpha z)^{1-\frac{1}{\alpha}}}{(1-\alpha z)^{1-\frac{1}{\alpha}}-z\bigl(1-(\alpha-\frac12)z\bigr)}.
\end{equation*}
For $z<0$ and $\alpha\geq \frac12$ we have $1-\alpha z>0$ and $-z(1-(\alpha-\frac12)z)>0$, which implies $0<R(z)<1$.
Hence, the  MPRK22($\alpha$) and MPRK22ncs($\alpha$) schemes  converge montonically towards the correct steady state along the line $y_1+y_2=y_1^0+y_2^0$, just like the solution of the continuous problem. 

As we have seen, there is no difference between MPRK22($\alpha$) and MPRK22ncs($\alpha$) schemes in the cases $a=0$ or $b=0$, which might lead to the conclusion that both schemes have equal stability properties in general. In particular, since \eqref{eq:system_b=0} is a straightforward extension of Dahlquist's equation to positive and conservative PDS. But the analysis on the general system \eqref{PDS_test} in Section~\ref{sec:stab_mprk22} shows significant differences with respect to the stability of MPRK22($\alpha$) and MPRK22ncs($\alpha$) schemes. Hence, it is insufficient to use \eqref{eq:system_b=0} to evaluate the stability of schemes which do not belong to the class of general linear methods. 
 
\section{Numerical Experiments}\label{Num Tests}
In this section we perform numerical experiments to confirm the stability properties of both MPRK22($\alpha$) and MPRK22ncs($\alpha$) schemes. For this purpose, we consider the test equation \eqref{PDS_test} for the case $a = b=25$. 
In particular, we consider the initial value problem 
\begin{equation}\label{test_equation}
	\mbfy'= \begin{pmatrix*}[r]
		-25 &25\\ 25 &-25
	\end{pmatrix*} \mbfy \quad \text{with} \quad  \by^0=\begin{pmatrix*}0.998\\0.002\end{pmatrix*}.
\end{equation}
The nonzero eigenvalue is $\lambda = -(a+b) = -50$ and the analytic solution is given by
\begin{equation}\label{eq:Test_exact}
	\mbfy(t)=\frac{y_1^0+y_2^0}{a+b}\begin{pmatrix}
		b\\a 
	\end{pmatrix}+\frac{a y_1^0-by_2^0}{a+b}\Vec{1\\ -1}e^{\lambda t}
=  \frac12\begin{pmatrix}
	1\\1 
\end{pmatrix}+0.498 \Vec{1\\ -1}e^{-50 t}.
\end{equation}
As depicted in Figure~\ref{Fig:exact_sol}, it can be observed that the equilibrium state 
\[
\by^*=\frac{y_1^0+y_2^0}{a+b}\begin{pmatrix}
	b\\a
\end{pmatrix}=\frac12 \begin{pmatrix}
1\\1
\end{pmatrix}
\]
is approximately already reached at time $t = 0.1$.
\begin{figure}
\centering
\includegraphics[width=0.5\textwidth]{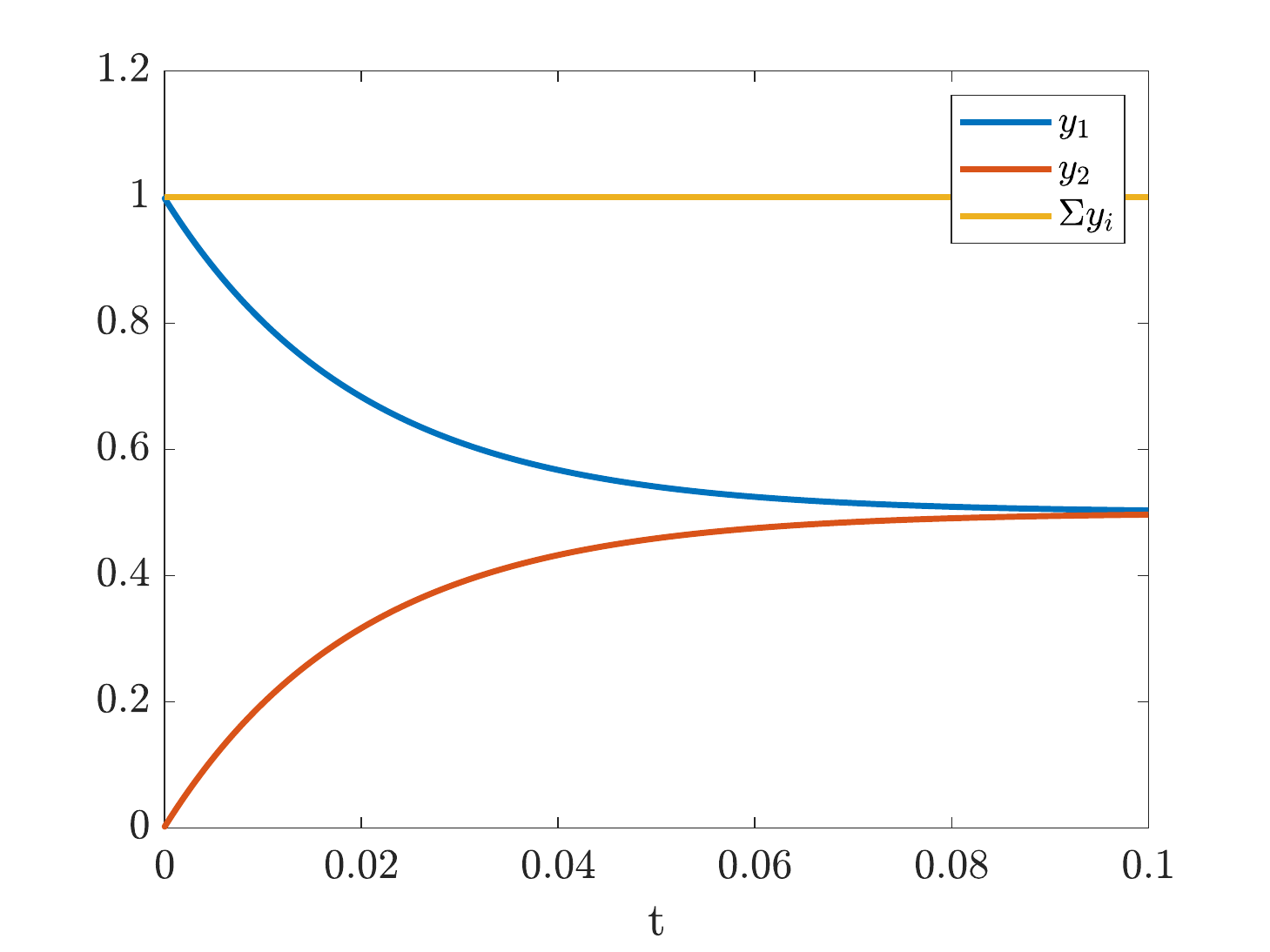}
\caption{Exact solution \eqref{eq:Test_exact} of the test problem \eqref{test_equation}.}\label{Fig:exact_sol}
\end{figure}

To verify the theoretical statements of Corollaries~\ref{cor:MPRK22stabil}~and~\ref{cor:unstable}, we investigate at first the case of $\alpha \geq 1$, for which we have proven that MPRK22($\alpha$) as well as MPRK22ncs($\alpha$) schemes are stable and locally convergent to the correct steady state of \eqref{PDS_test}, irrespectively of the chosen time step size. To illustrate these positive properties of the methods, we choose $\Delta t = 4$ and $\Delta t = 20$, in order to use very large time step sizes compared to the time scale of the exact solution of $0.1$ as mentioned above. Numerical approximations obtain with these time step sizes can be seen in Figure~\ref{Fig:MPRK22_all_Delta}. Thereby, the predicted stability of the schemes is clearly demonstrated. In the case of $\Delta t = 4$, the iterates of both MPRK22(1) and MPRK22ncs(1) converge to the steady state of the exact solution shown as a dashed lines. It is worth mentioning that the result of MPRK22(1) shows significantly less oscillations compared to the result of MPRK22ncs(1) and is also very close to the steady state already for t = 30, while MPRK22ncs(1) requires much more iterations to approach the steady state.
The same behavior can be observed for $\Delta t= 20$, for which both schemes are stable and convergent to the steady state. Nevertheless, even in this case the MPRK22(2) shows smaller amplitudes and less oscillation compared to MPRK22ncs(2). In summary, the numerical experiments verify the theoretical results for both MPRK22($\alpha$) and MPRK22ncs($\alpha$) schemes in the case of $\alpha \geq 1$. 

\begin{figure}[!h]
\begin{subfigure}[t]{0.495\textwidth}
\includegraphics[width=\textwidth]{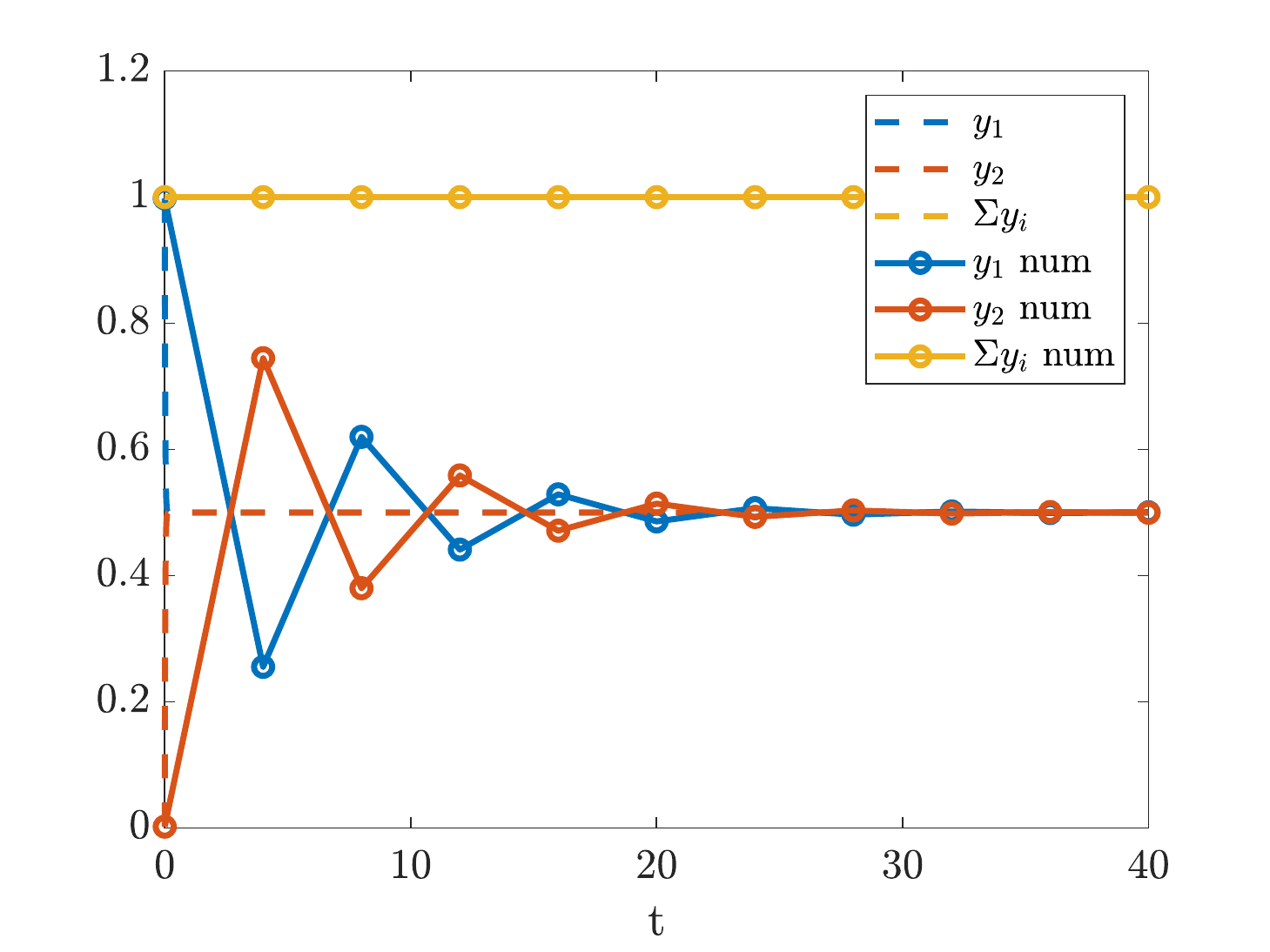}
\subcaption{ MPRK22(1) with $\Delta t=4$}
\end{subfigure}
\begin{subfigure}[t]{0.495\textwidth}
\includegraphics[width=\textwidth]{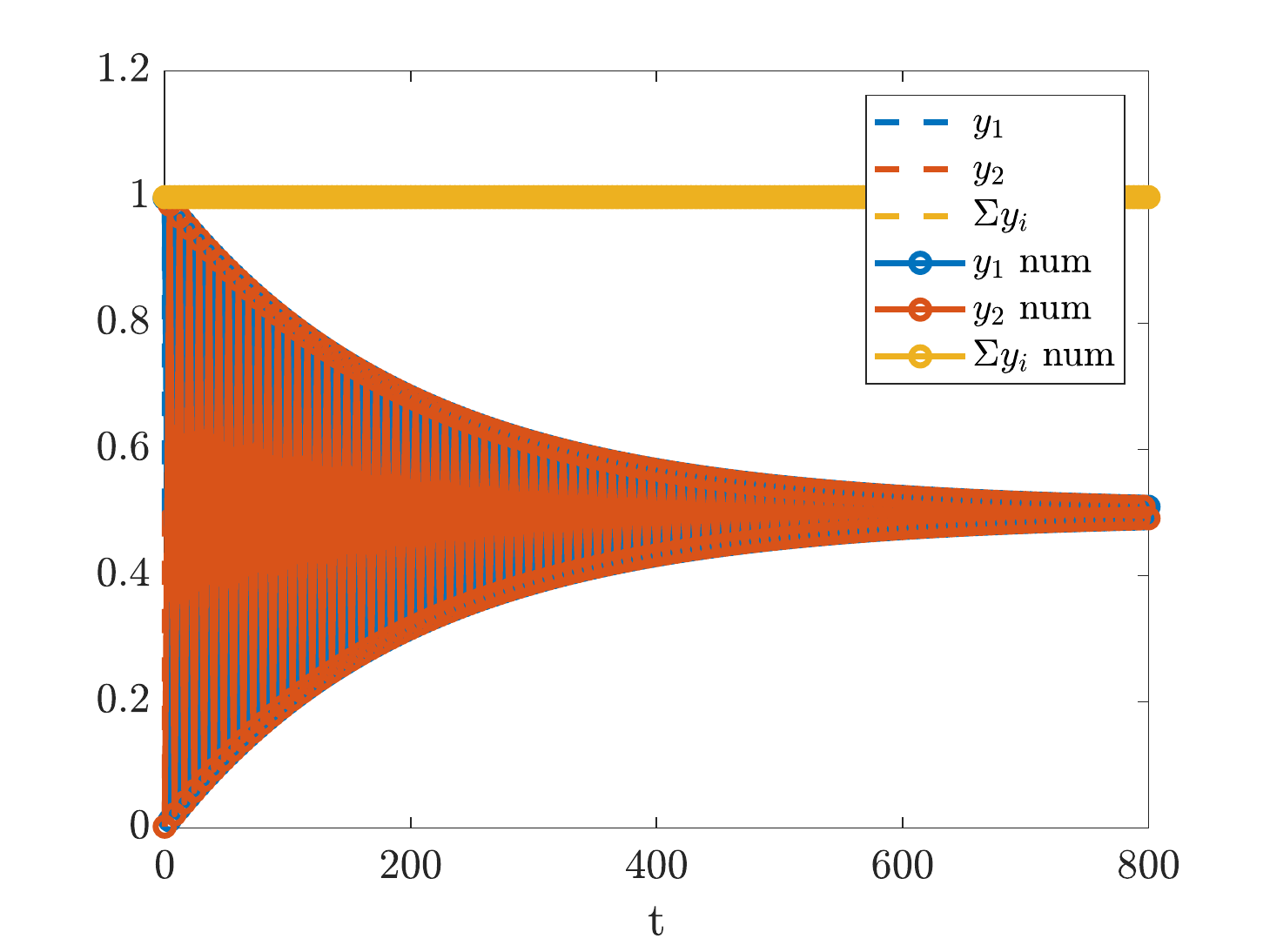}
\subcaption{MPRK22ncs(1) with $\Delta t=4$}
\end{subfigure}\\

\begin{subfigure}[t]{0.495\textwidth}
\includegraphics[width=\textwidth]{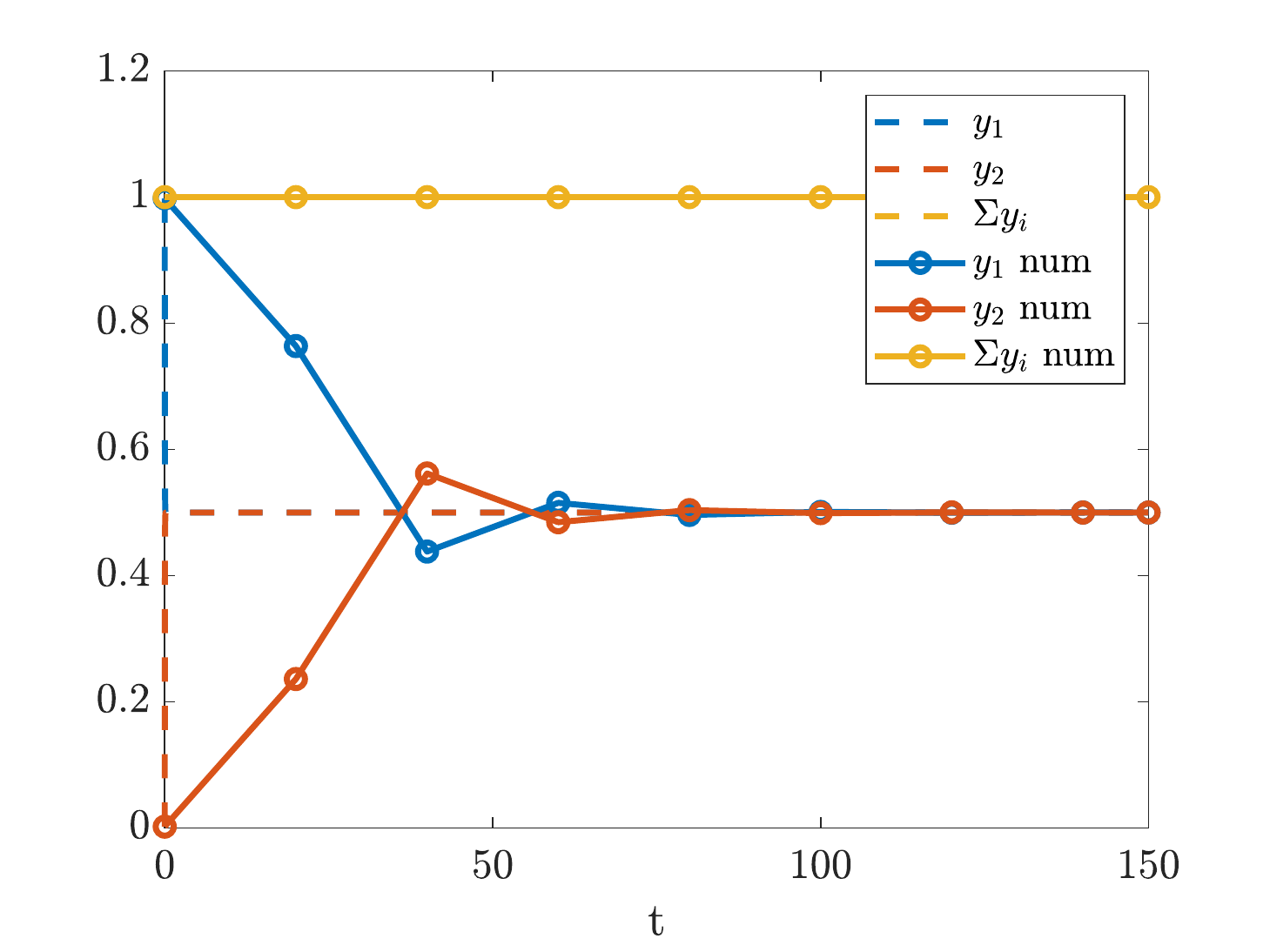}
\subcaption{MPRK22(2) with $\Delta t=20$}
\end{subfigure}
\begin{subfigure}[t]{0.495\textwidth}
\includegraphics[width=\textwidth]{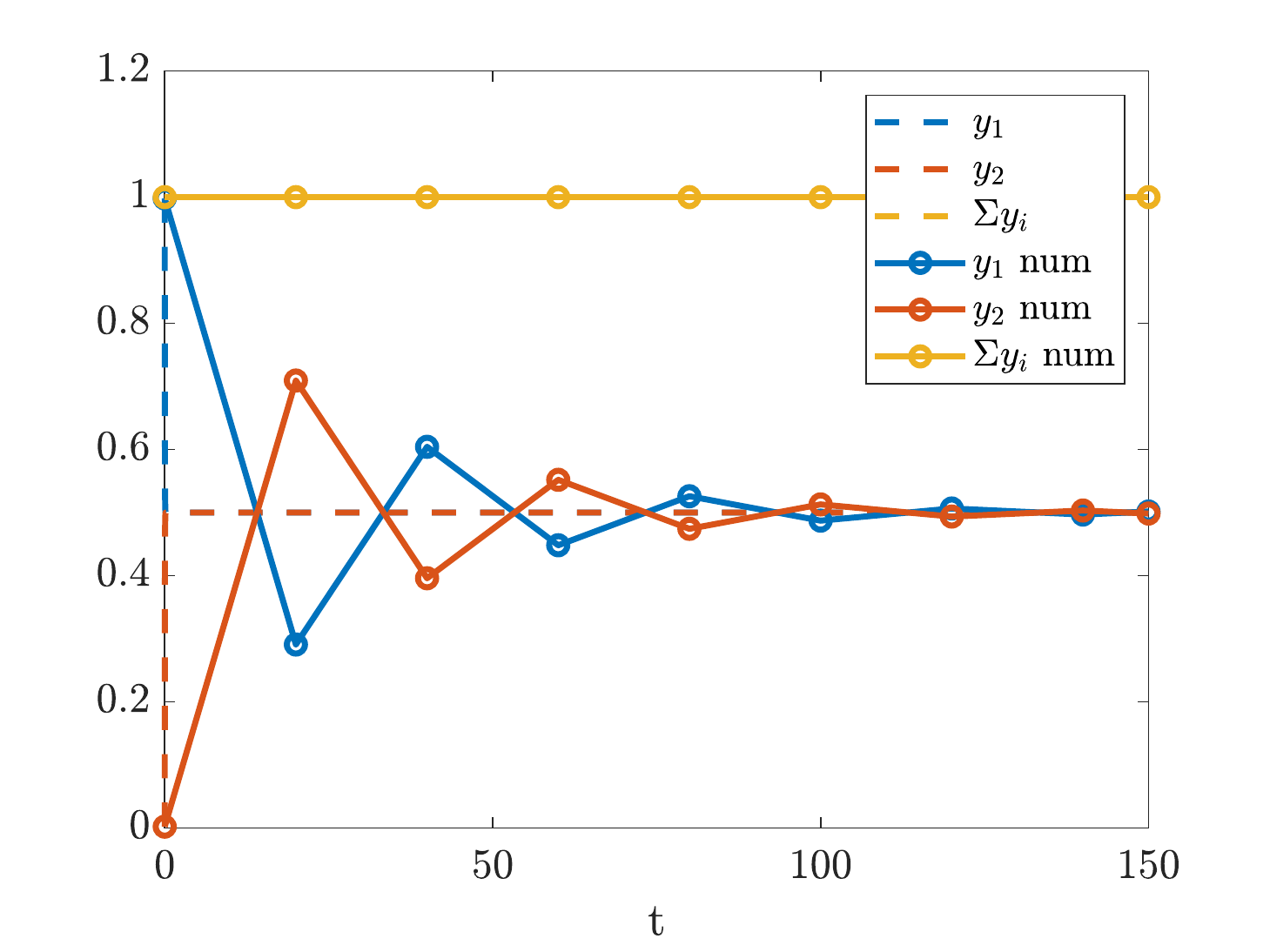}
\subcaption{MPRK22ncs(2) with $\Delta t=20$}
\end{subfigure}
\caption{Numerical approximations of \eqref{test_equation}. The dashed lines indicate the exact solution \eqref{eq:Test_exact}.}
\label{Fig:MPRK22_all_Delta}
\end{figure}

The choice of parameters $\frac12 \leq \alpha < 1$ requires a more differentiated consideration since in this case MPRK22($\alpha$) schemes are stable due to Corollary~\ref{cor:MPRK22stabil} while MPRK22ncs($\alpha$) schemes postulate stability conditions with respect to the time step size according to Corollary~\ref{cor:unstable}. In the following we focus on $\alpha = 0.5$ and $\alpha = 0.8$. To demonstrate the different stability behavior, we choose points of the form
\[
  \mbfz(\Delta t) = \begin{pmatrix}
  	z_a(\Delta t)\\z_b(\Delta t)
  \end{pmatrix}=-\Delta t \begin{pmatrix}
  a\\b
\end{pmatrix}=-\Delta t \begin{pmatrix}
25\\25
\end{pmatrix}
\]
inside and outside the stability domain. The location of the points $\mbfz(\Delta t)$ is visualized for $\alpha = 0.5$ as well as $\alpha = 0.8$ by a red line in Figure~\ref{Fig:red_line}. Following Corollary~\ref{cor:unstable} the point $\mbfz(\Delta t_\alpha^*)$ lies on the boundary of the stability region and straightforward calculations yield $\Delta t_\alpha^*=(\sqrt{17}+3)/50\approx 0.14$ for $\alpha = 0.5$ and $\Delta t_\alpha^*=(\sqrt{101}+9)/50\approx 0.38$ for $\alpha = 0.8$, respectively. The two remaining points within the figures result in each case from the choices $\Delta t_1 = \Delta t_\alpha^* - 10^{-1}$ and $\Delta t_2 = \Delta t_\alpha^* + 10^{-1}$. 

\begin{figure}[!h]
\begin{subfigure}[t]{0.495\textwidth}
\includegraphics[width=\textwidth]{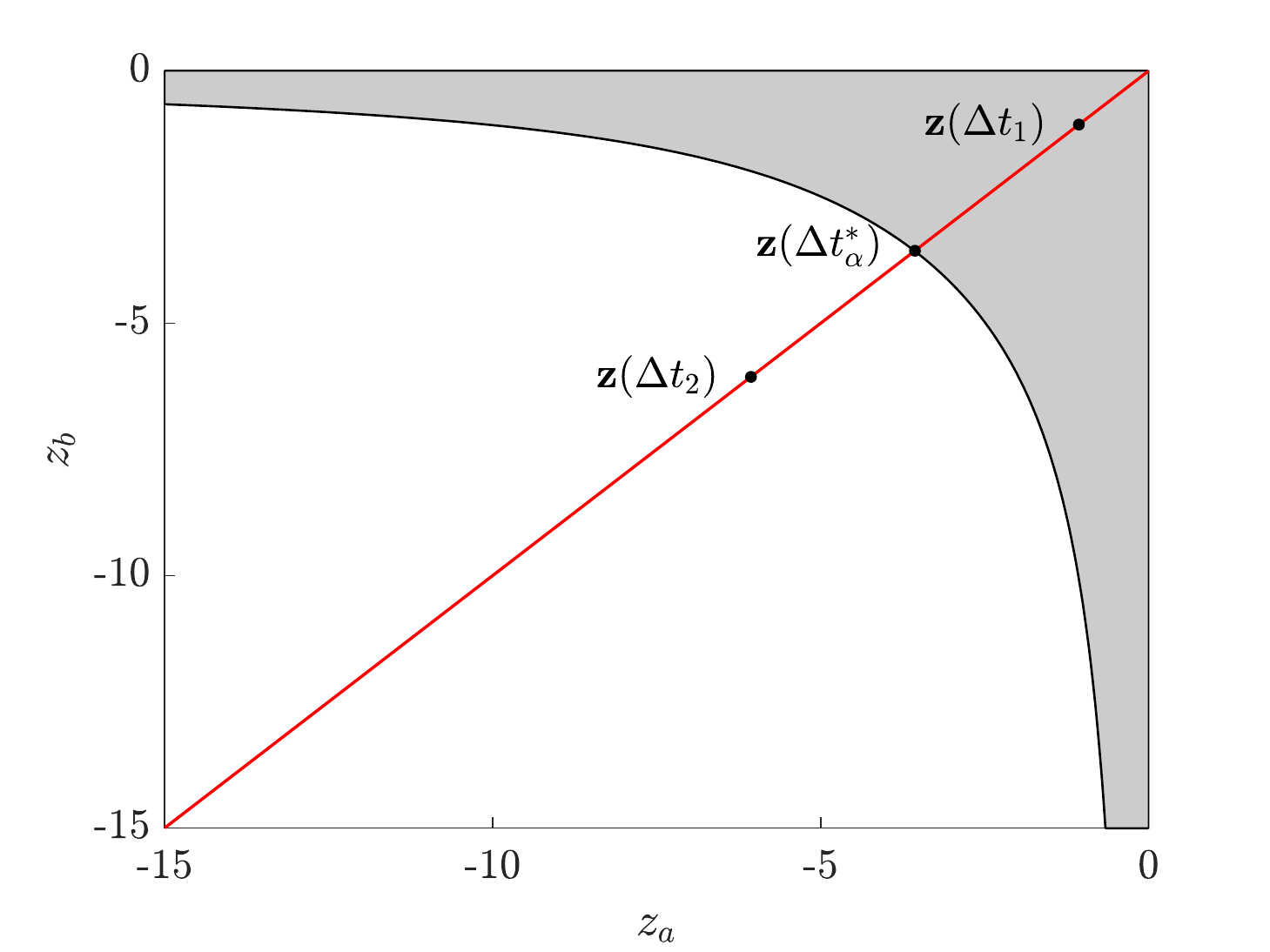}
\subcaption{$\alpha=0.5$ and $\Delta t_\alpha^*=(\sqrt{17}+3)/50\approx 0.14.$ }
\end{subfigure}
\begin{subfigure}[t]{0.495\textwidth}
\includegraphics[width=\textwidth]{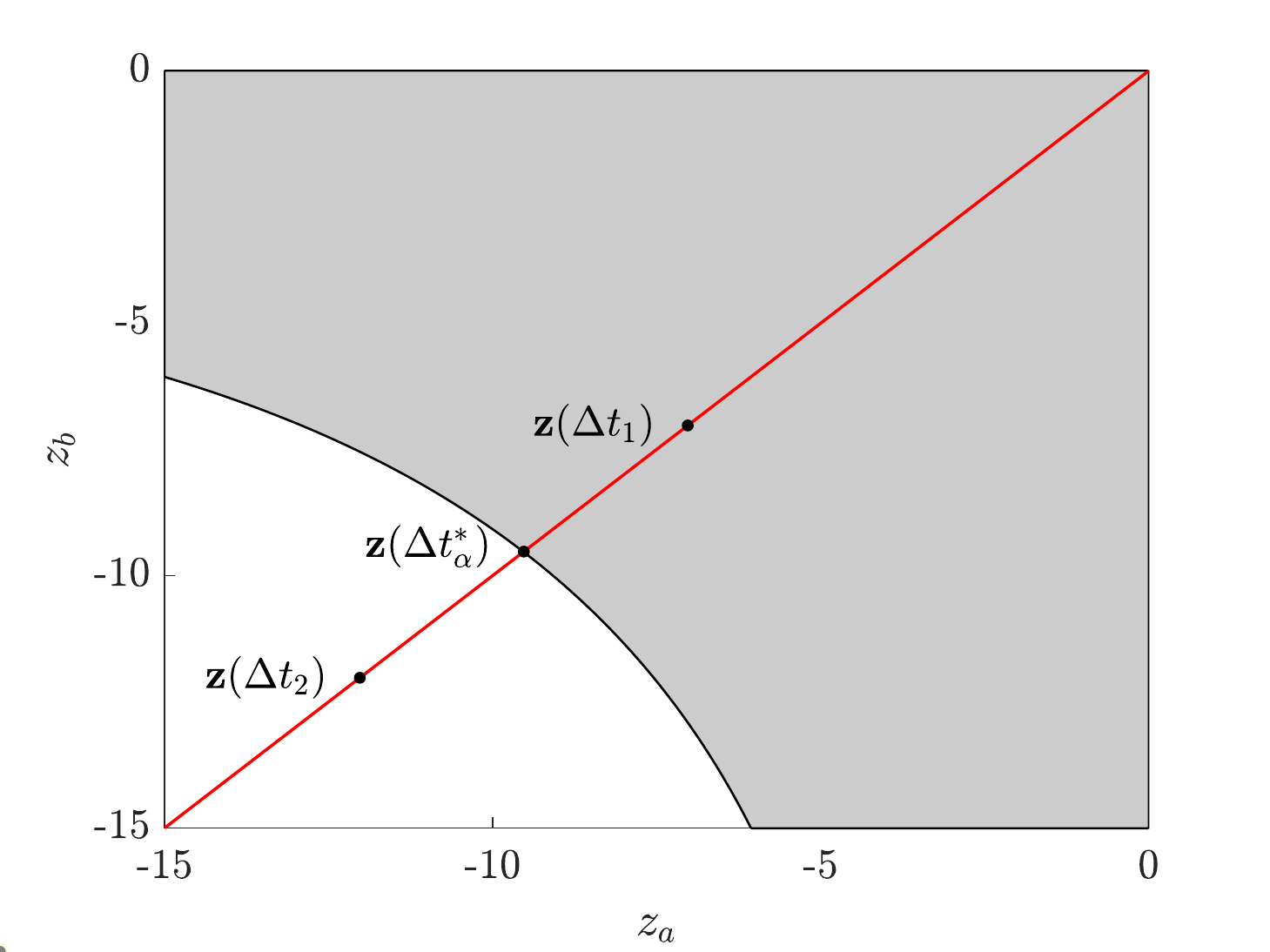}
\subcaption{$\alpha=0.8$ and $\Delta t_\alpha^*=(\sqrt{101}+9)/50\approx 0.38.$ }
\end{subfigure}
\caption{The marked points are $\bm z(\Delta t_2)=-25(\Delta t_\alpha^*+10^{-1})(1,1)^T$, $\bm z(\Delta t_\alpha^*)=-25\Delta t_\alpha^*(1,1)^T$ and $\bm z(\Delta t_1)=-25(\Delta t_\alpha^*-10^{-1})(1,1)^T$, which lie on the red line $z_a=z_b$. The grey areas represent the stability regions for the MPRK22ncs($\alpha$) schemes with $\alpha=0.5$ and $\alpha=0.8$ respectively, see also Figure~\ref{StabilityRegion}.}
\label{Fig:red_line}
\end{figure}

According to Corollary~\ref{cor:unstable}, the MPRK22ncs method can be expected to be stable for $\Delta t_1$ and unstable for $\Delta t_2$. The first expectation is confirmed by Figure~\ref{Fig:MPRK22_Deltat<Deltat*} and additionally MPRK22($\alpha$) is shown to be stable for both time step sizes, which coincides with the statement of Corollary~\ref{cor:MPRK22stabil}. 

\begin{figure}[!h]
\begin{subfigure}[t]{0.495\textwidth}
\includegraphics[width=\textwidth]{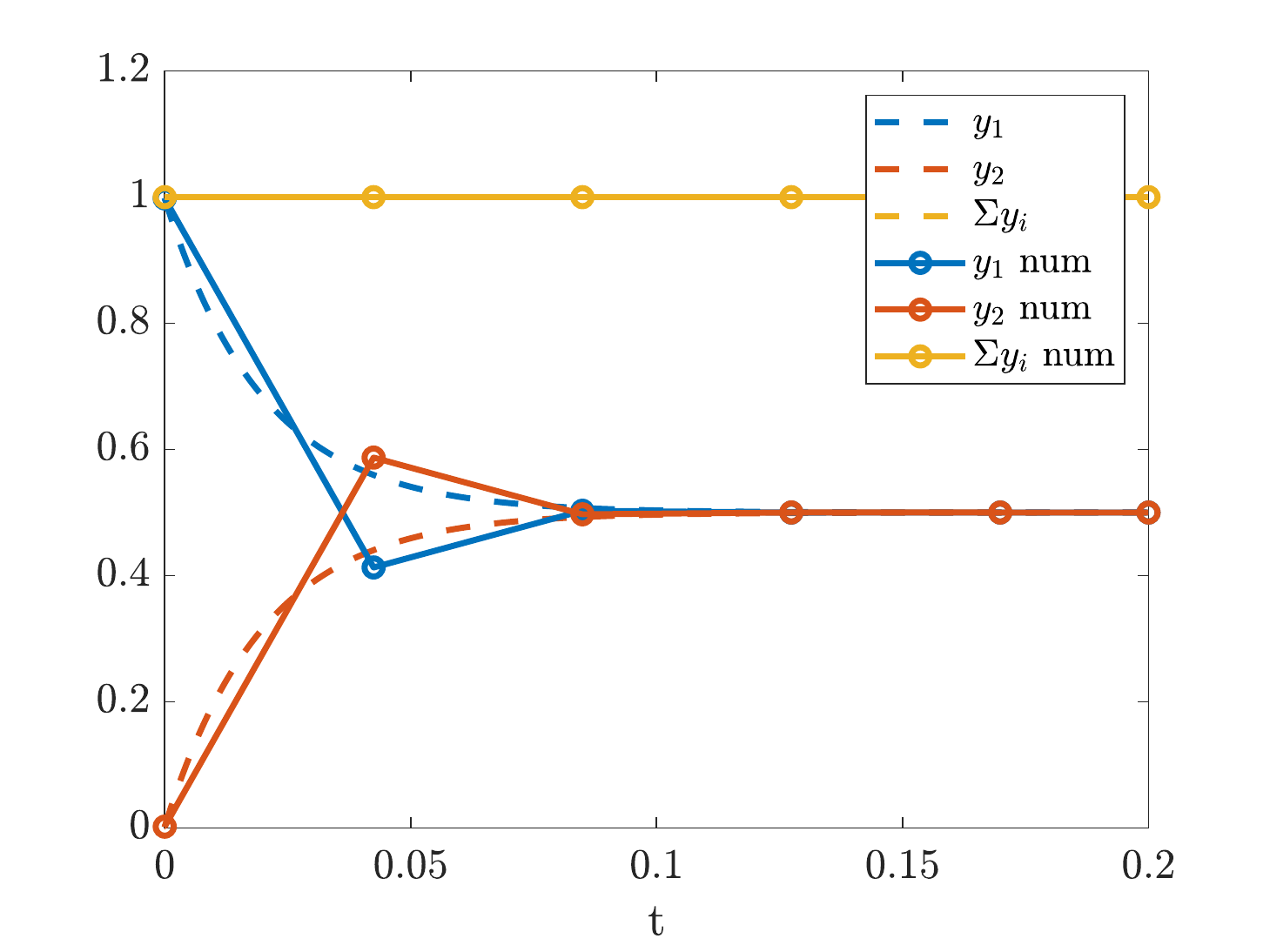}
\subcaption{MPRK22(0.5) with $\Delta t_1 =\Delta t_\alpha^*-10^{-1}\approx 0.04$}
\end{subfigure}
\begin{subfigure}[t]{0.495\textwidth}
\includegraphics[width=\textwidth]{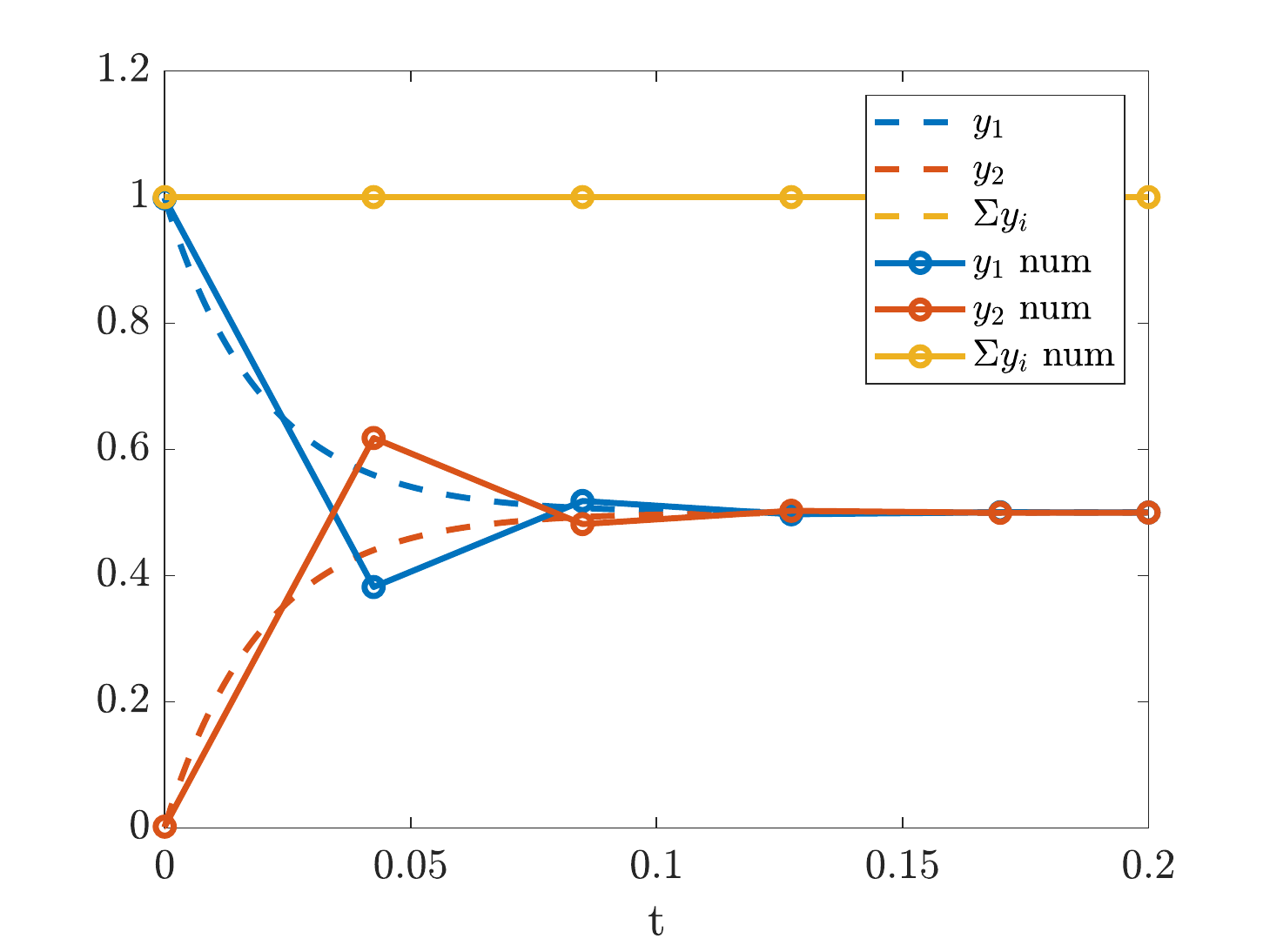}
\subcaption{MPRK22ncs(0.5) with $\Delta t_1 =\Delta t_\alpha^*-10^{-1}\approx 0.04$}
\end{subfigure}\\

\begin{subfigure}[t]{0.495\textwidth}
\includegraphics[width=\textwidth]{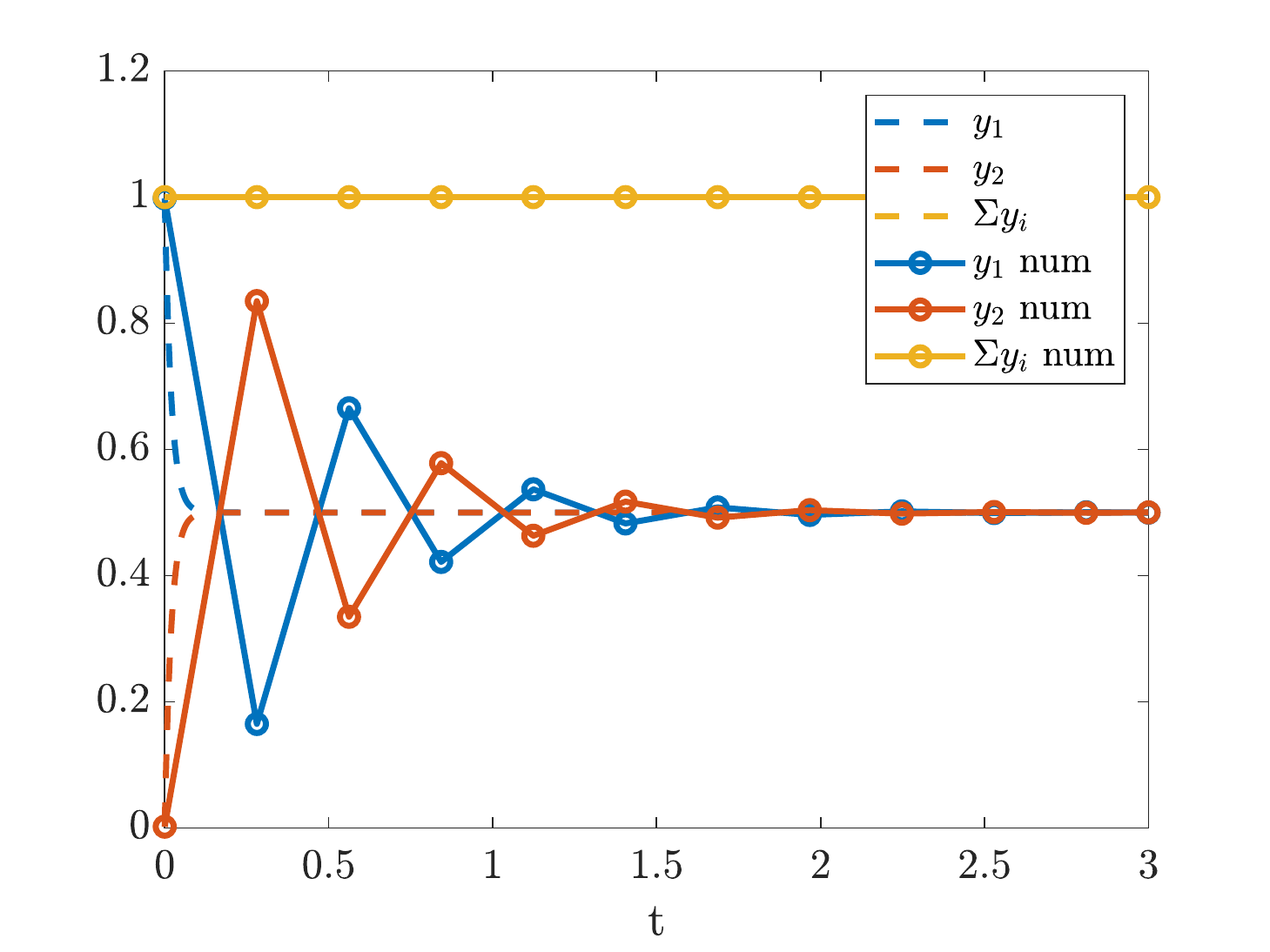}
\subcaption{MPRK22(0.8) with $\Delta t_1 =\Delta t_\alpha^*-10^{-1}\approx 0.28$}
\end{subfigure}
\begin{subfigure}[t]{0.495\textwidth}
\includegraphics[width=\textwidth]{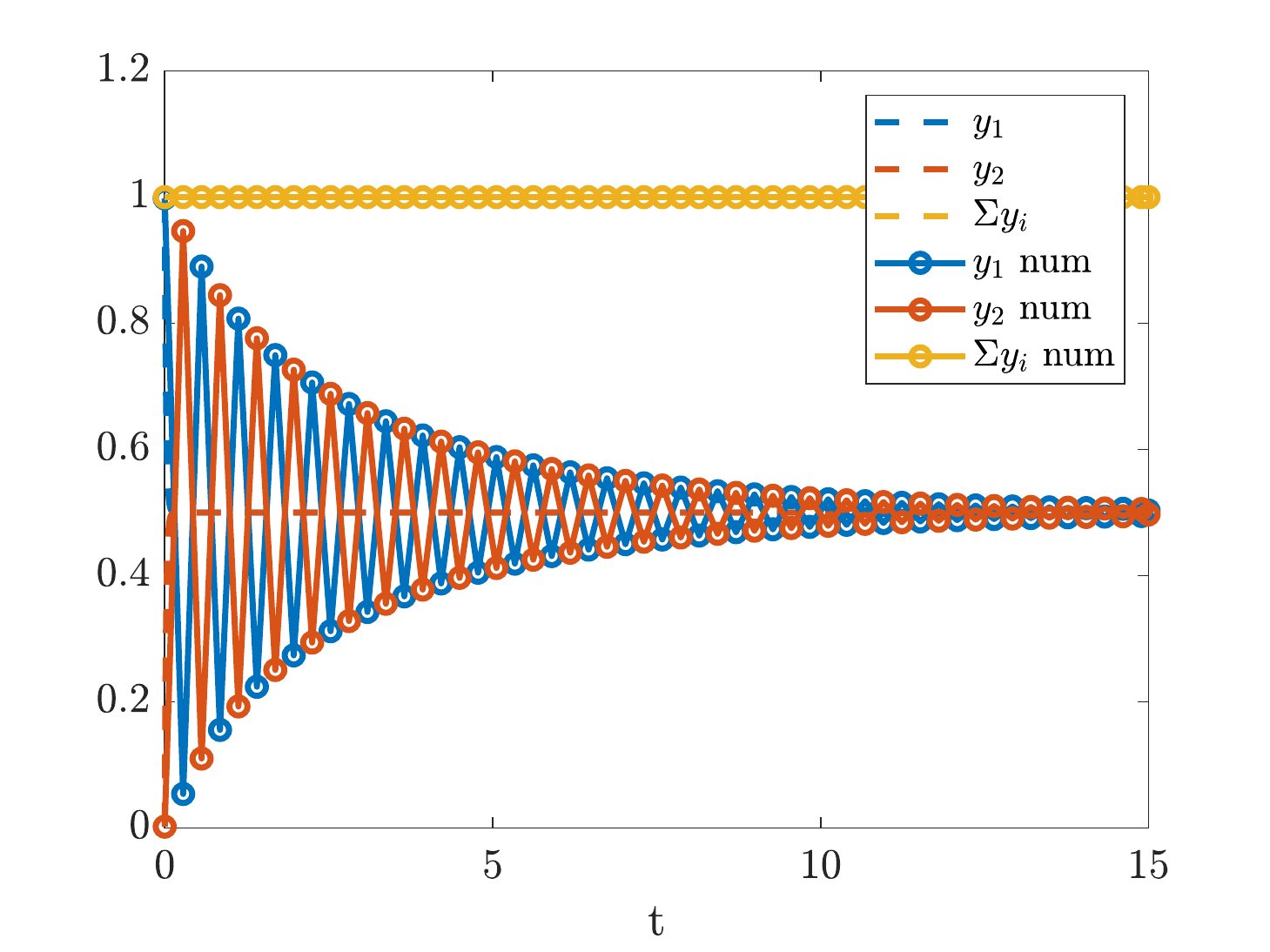}
\subcaption{MPRK22ncs(0.8) with $\Delta t_1 =\Delta t_\alpha^*-10^{-1}\approx 0.28$}
\end{subfigure}
\caption{Numerical approximations of \eqref{test_equation}. The dashed lines indicate the exact solution \eqref{eq:Test_exact}.}
\label{Fig:MPRK22_Deltat<Deltat*}
\end{figure}

In accordance with the presented theory, one can observe the superior stability behavior of MPRK22($\alpha$) in Figure~\ref{Fig:RefSol+Comp} even in the case that the time step size is chosen larger then the critical step size for MPRK22ncs($\alpha$). However, the instability of MPRK22ncs($\alpha$) for $\Delta t_2 = \Delta t_\alpha^* + 10^{-1}$ can only be guessed by the illustration in Figure~\ref{Fig:RefSol+Comp}. To show the divergence of the method more clearly, we modify the initial condition within the initial value problem. Therefore, we consider
\[
\by^0= \by^* +10^{-3}\begin{pmatrix}
1\\-1
\end{pmatrix}= \begin{pmatrix}
0.501\\0.499
\end{pmatrix}
\] 
which is much closer to the steady state than the previously used value. The results shown in Figure~\ref{Fig:InstabZoom} clearly demonstrate the expected divergence from the steady state for $\alpha = 0.5$ as well as $\alpha = 0.8$.

\begin{figure}[!h]
\begin{subfigure}[t]{0.495\textwidth}
\includegraphics[width=\textwidth]{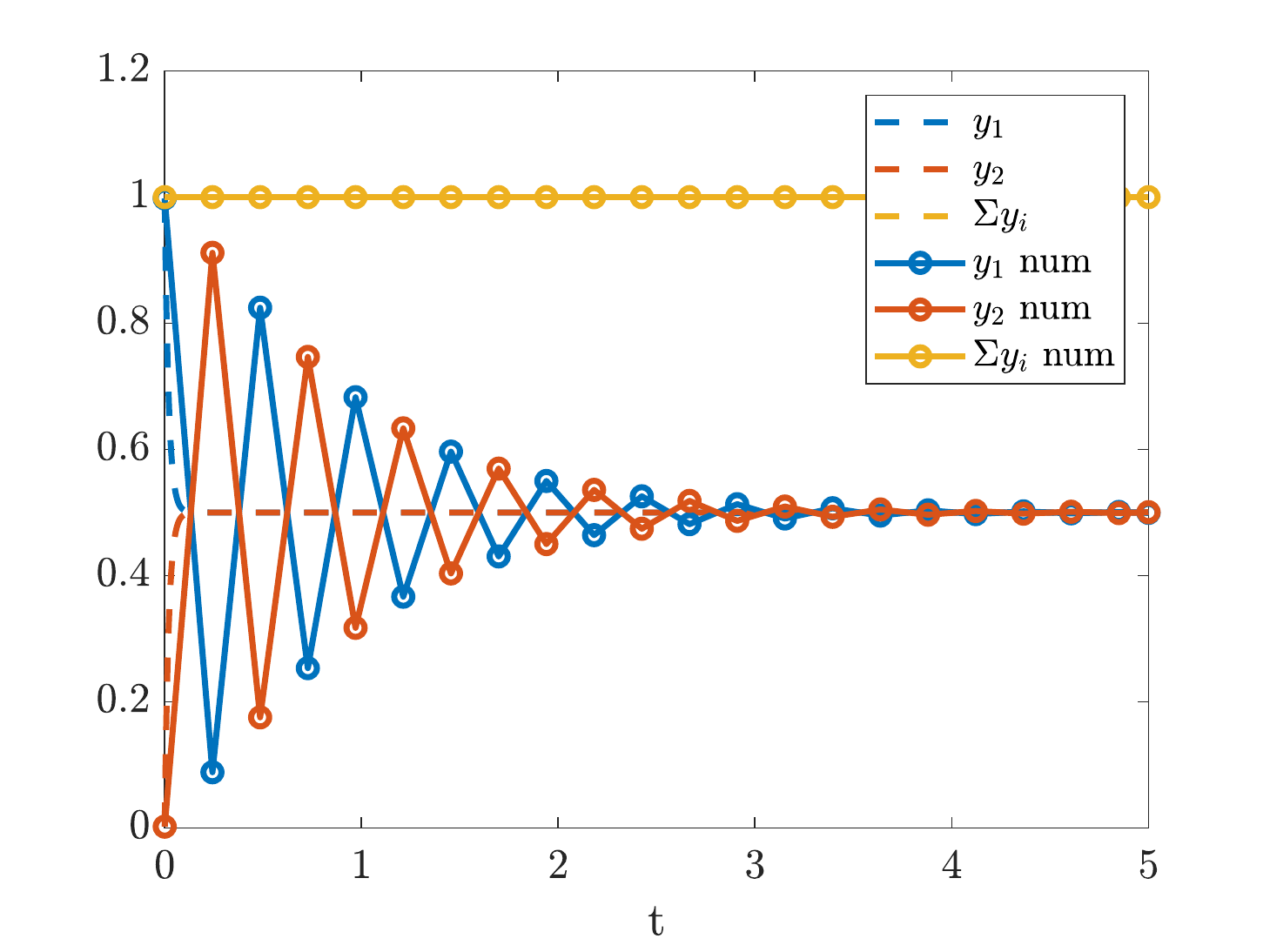}
\subcaption{MPRK22(0.5) with  $\Delta t=\Delta t_\alpha^*+10^{-1}\approx 0.24$}
\end{subfigure}
\begin{subfigure}[t]{0.495\textwidth}
\includegraphics[width=\textwidth]{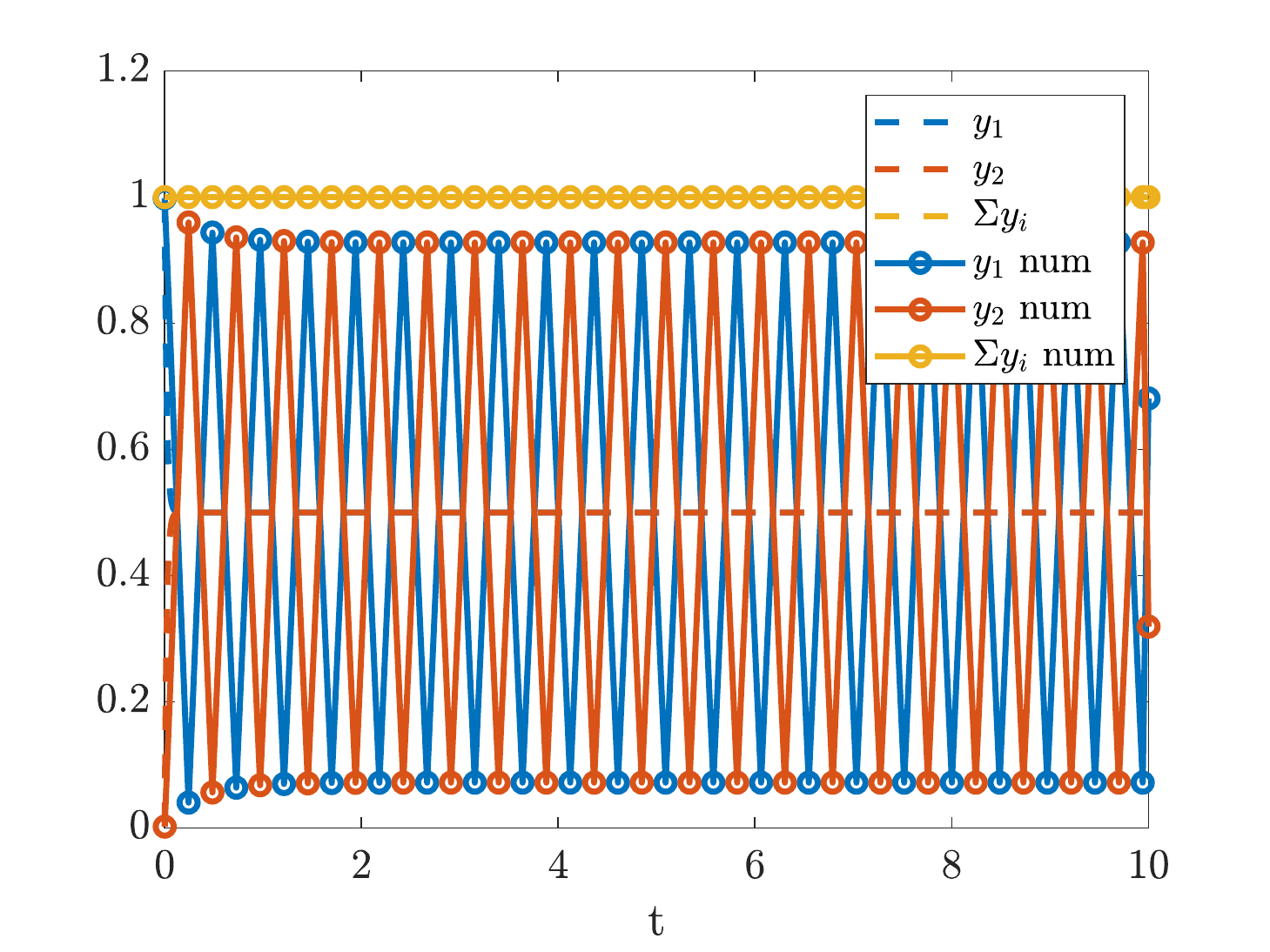}
\subcaption{MPRK22ncs(0.5) with $\Delta t=\Delta t_\alpha^*+10^{-1}\approx 0.24$}
\end{subfigure}\\

\begin{subfigure}[t]{0.495\textwidth}
\includegraphics[width=\textwidth]{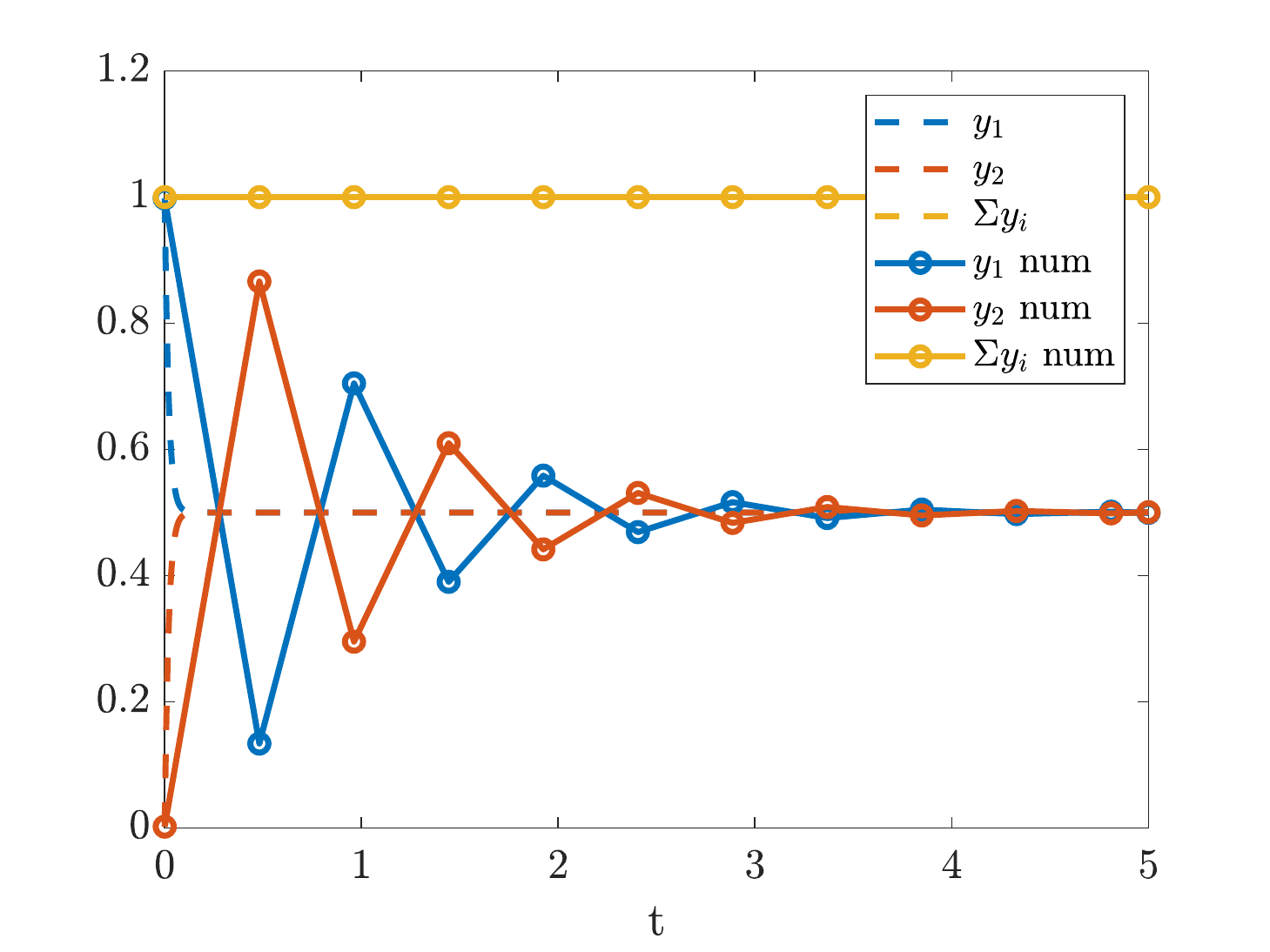}
\subcaption{MPRK22(0.8) with  $\Delta t=\Delta t_\alpha^*+10^{-1}\approx 0.48$}
\end{subfigure}
\begin{subfigure}[t]{0.495\textwidth}
\includegraphics[width=\textwidth]{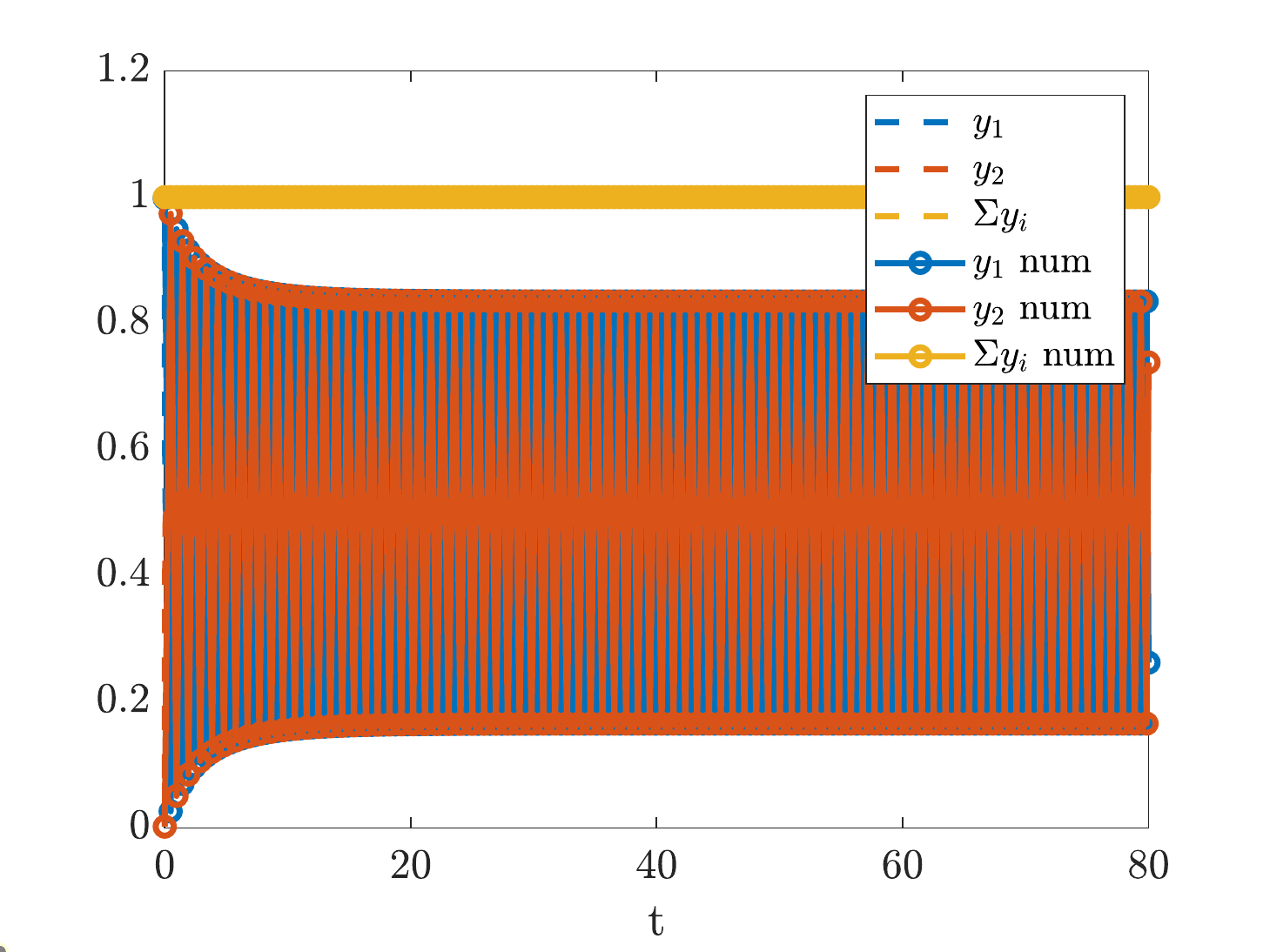}
\subcaption{MPRK22ncs(0.8) with $\Delta t=\Delta t_\alpha^*+10^{-1}\approx 0.48$}
\end{subfigure}
\caption{Numerical approximations of \eqref{test_equation}. The dashed lines indicate the exact solution \eqref{eq:Test_exact}.}\label{Fig:RefSol+Comp}
\end{figure}

\begin{figure}[!h]
\begin{subfigure}[t]{0.495\textwidth}
\includegraphics[width=\textwidth]{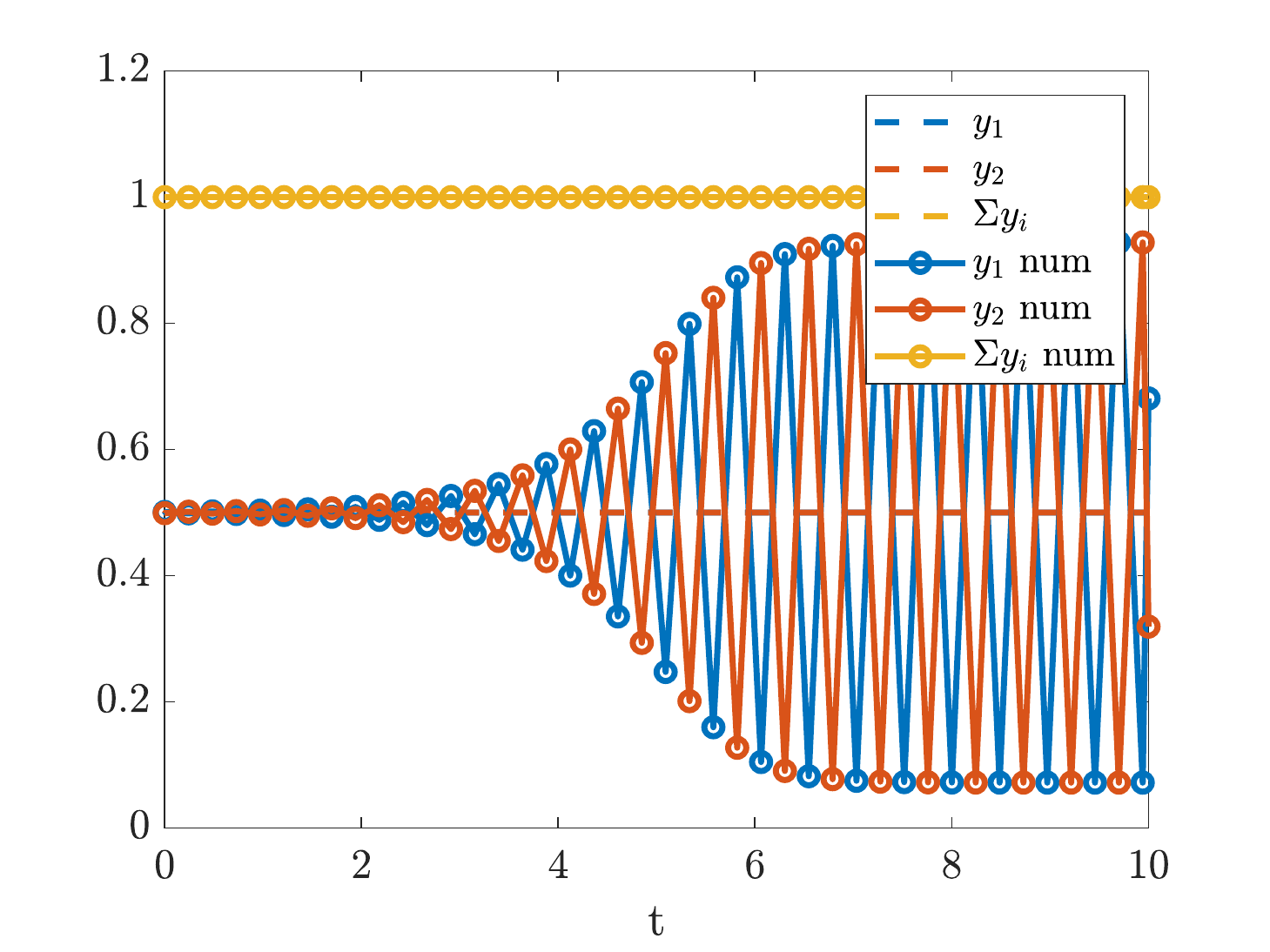}
\subcaption{MPRK22ncs(0.5) with $\Delta t=\Delta t_\alpha^*+10^{-1}\approx 0.24$}
\end{subfigure}
\begin{subfigure}[t]{0.495\textwidth}
\includegraphics[width=\textwidth]{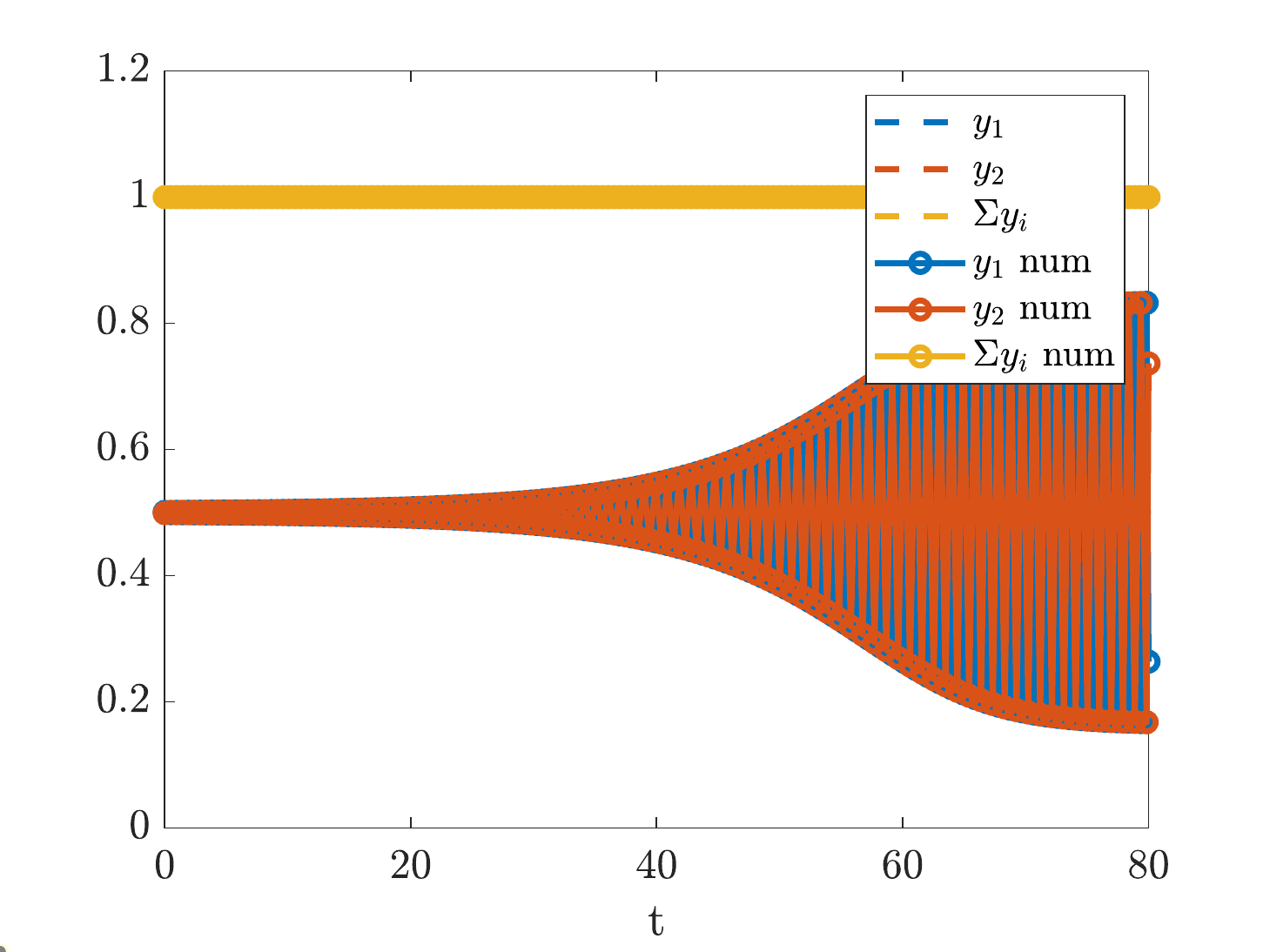}
\subcaption{MPRK22ncs(0.8) with $\Delta t=\Delta t_\alpha^*+10^{-1}\approx 0.48$}
\end{subfigure}
\caption{Numerical approximation of \eqref{PDS_test} with $\by^0=(y_1^*+10^{-3},y_2^*-10^{-3})^T=(0.5+10^{-3},0.5-10^{-3})^T$. The dashed lines indicate the exact solution \eqref{eq:Test_exact}.}\label{Fig:InstabZoom}
\end{figure}

\section{Summary and outlook}
In this paper a stability analysis for general positive and conservative time integration schemes based on the center manifold theory for maps was presented for the first time. The theory shows that even for nonlinear positive and conservative time integrators the investigation of the eigenvalues of the Jacobian is sufficient to analyze stability. This novel theory was used to carry out a first stability analysis of MPRK schemes. Thereby, we discovered that for $\alpha\geq 1$ both MPRK22($\alpha$) and MPRK22ncs($\alpha$) schemes possess stable fixed points irrespective of the chosen time step size $\Delta t$. If $\alpha<1$, these stability properties are maintained by the MPRK22($\alpha)$ schemes,   
whereas the investigation of MPRK22ncs($\alpha$) revealed time step restrictions to ensure stability. We also computed  the corresponding stability regions for MPRK22ncs($\alpha$) schemes.
 
Future research topics include the extension of the statement of Theorem~\ref{Thm_MPRK_stabil} to higher dimensional linear and nonlinear systems as well as the investigation of global stability properties. Also, due to the fact that Theorem~\ref{Thm_MPRK_stabil} is applicable to general positive and conservative schemes a stability analysis of the schemes presented in \cite{KM18Order3,MR3969000,MR3934688,MR4064785,MR4087156,MR4109346, MR4194400} is now possible for the first time.

\section{Acknowledgements}
The author Th.\ Izgin gratefully acknowledges the financial support by the Deutsche Forschungsgemeinschaft
(DFG) through grant ME 1889/10-1.

\bibliographystyle{plain} 
\bibliography{cas-refs}
\end{document}